\theoremstyle{plain}
\newtheorem{thm}{Theorem}
\newtheorem{theorem}[thm]{Theorem}
\newtheorem{corollary}[thm]{Corollary}
\newtheorem{lemma}[thm]{Lemma}
\newtheorem{prop}[thm]{Proposition}
\newtheorem{proposition}[thm]{Proposition}
\newtheorem*{mr}{Main Result}
\newtheorem*{introth}{Theorem}
\newtheoremstyle{exm}
{9pt}{9pt}{}{}{\bfseries}{}{.5em}{}
\theoremstyle{exm}
\newtheorem{exm}[thm]{Example}
\newtheoremstyle{rmk}
{9pt}{9pt}{}{}{\bfseries}{}{.5em}{}
\theoremstyle{rmk}
\theoremstyle{alg}
\newtheoremstyle{question}
{9pt}{9pt}{}{}{\bfseries}{}{.5em}{}
\theoremstyle{question}
\numberwithin{equation}{section}
\numberwithin{thm}{section}
\numberwithin{figure}{section}
\theoremstyle{definition}
\newtheorem{definition}[thm]{Definition}
\newtheorem{defin}[thm]{Definition}
\newtheorem{example}[thm]{Example}
\newtheorem{remark}[thm]{Remark}
\newcommand{\C}{\mathbb{C}}
\newcommand{\R}{\mathbb{R}}
\newcommand{\T}{\mathbb{T}} 
\newcommand{\Z}{\mathbb{Z}}
\def\epsilon{\varepsilon}
\newcommand{\al}{\alpha}
\newcommand{\ep}{\epsilon}
\newcommand{\lam}{\lambda}
\newcommand{\om}{\omega}
\newcommand{\msT}{\mathscr T}
\newcommand{\sm}{(M,\omega)} 
\newcommand{\cotan}{\mathrm{T}^*} 
\newcommand{\diff}{\mathrm{d}} 
\newcommand{\Hs}{\mathcal{H}am_{S^1}} 
\newcommand{\cp}{\C\mathrm{P}^2} 
\newcommand{\st}{\mathcal{S}\mathcal{T}} 
\newcommand{\semitoric}{(M,\omega,\Phi)} 
\newcommand{\hamiltonian}{(M,\omega,J)} 
\newcommand{\polygon}{\mathcal{P}_{\boldsymbol{\ep}}} 
\DeclarePairedDelimiter\abs{\lvert}{\rvert}
\newcommand{\Jmin}{J_{\mathrm{min}}}
\newcommand{\Jmax}{J_{\mathrm{max}}}
\newcommand{\purge}[1]{} 
\newcommand{\vungoc}{V\~u Ng\d{o}c}
\title[From semi-toric systems to Hamiltonian $S^1$-spaces]{From semi-toric systems to Hamiltonian $S^1$-spaces}
\author[S. Hohloch]{Sonja Hohloch}
\address{Department of Mathematics, EPFL, Lausanne, Switzerland}
\email{sonja.hohloch@epfl.ch}
\author[S. Sabatini]{Silvia Sabatini}
\address{Department of Mathematics, EPFL, Lausanne, Switzerland}
\email{silvia.sabatini@epfl.ch}
\author[D. Sepe]{Daniele Sepe}
\address{CAMGSD, Instituto Superior T\'ecnico, Lisbon, Portugal}
\email{dsepe@math.ist.utl.pt}
\date{\today}
\begin{document}

\begin{abstract}
This paper studies the local and global aspects of semi-toric integrable systems, introduced by \vungoc, using ideas stemming from the theory of Hamiltonian $S^1$-spaces developed by Karshon. First, we show how any labeled convex polygon associated to a semi-toric system (as defined by \vungoc) determines Karshon's labeled directed graph which classifies the underlying Hamiltonian $S^1$-space up to isomorphism. Then we characterize adaptable semi-toric systems, i.e.\ those whose underlying Hamiltonian $S^1$-action can be extended to an effective Hamiltonian $\mathbb{T}^2$-action, as those which have at least one associated convex polygon which satisfies the Delzant condition.
\end{abstract}

\maketitle

\tableofcontents


\section{Introduction}

\label{sec:introduction}

This paper studies the relation between a certain family of completely integrable Hamiltonian systems on closed 4-dimensional symplectic manifolds and Hamiltonian $S^1$-actions on these spaces. As such, it lies at the intersection of the theory of Hamiltonian torus actions on closed symplectic manifolds and the classification of completely integrable Hamiltonian systems. The former is a special case of Hamiltonian actions in symplectic and Poisson geometry, an area of mathematics which brings together algebraic geometry, Lie theory, Poisson geometry and differential topology amongst others. Of particular prominence for the purposes of this work is Karshon's monograph \cite{karshon} on Hamiltonian circle actions on closed 4-dimensional symplectic manifolds, whose results have been extended to higher dimensions (cf. Karshon $\&$ Tolman \cite{karshon-tolman1,karshon-tolman2,karshon-tolman3}). The classification of completely integrable Hamiltonian systems is a driving question in Hamiltonian mechanics with many different aspects to it, which, for the sake of brevity, are not mentioned here (cf. Bolsinov $\&$ Oshemkov \cite{bo} and Pelayo $\&$ \vungoc\ \cite{pelayo-vu ngoc bulletin} for further details). This article is concerned with topological and symplectic aspects of completely integrable Hamiltonian systems, which have been studied since the work on constant energy surfaces by Fomenko \cite{fomenko} and his school. From the point of view of integrable systems, at the heart of this paper lie both the work on local normal forms near non-degenerate singular points by Eliasson \cite{eliasson thesis, eliasson}, and by Miranda $\&$ Zung \cite{miranda-zung}, and ideas which underpin Pelayo $\&$ \vungoc's recent classification of `generic' semi-toric systems (cf. \cite{pelayo-vu ngoc inventiones, pelayo_vu_ngoc_con,vu_ngoc_ff,vu ngoc}). Moreover, the symplectic perspective of Symington \cite{symington} and of Leung $\&$ Symington \cite{leung_symington} on a larger family of integrable Hamiltonian systems has also influenced the approach in this article. \\

Throughout this introduction, let $\sm$ be a connected, closed, symplectic 4-manifold. A \textbf{Hamiltonian $S^1$-space} consists of a triple $(M,\omega,J)$, where $J : M \to \R$ is the moment map of an effective Hamiltonian $S^1$-action (details in Section \ref{def karshon graph}). Such spaces are classified, up to a suitable notion of isomorphism, by Karshon \cite{karshon}, and their invariants are encoded in certain `labeled directed graphs'. 

On the other hand, a {\bf semi-toric system} consists of a triple 
\begin{equation*}
(M,\omega, \Phi =(J,H)) \quad \mbox{with } \Phi: \sm \to \R^2
\end{equation*}
where $\Phi$ defines a completely integrable Hamiltonian system whose singularities are non-degenerate in a suitable sense, and such that $\hamiltonian$ is a Hamiltonian $S^1$-space (details in Section \ref{sec:compl-integr-hamilt}). These systems are introduced and studied by \vungoc\ \cite{vu ngoc}, whose main motivation came from the example of two coupled angular momenta considered by Sadovski{\'{\i}}  $\&$ Z{\^h}ilinski{\'{\i}} \cite{sad_zhi}. Semi-toric systems have not been classified in full generality; however, Pelayo $\&$ \vungoc\ \cite{pelayo-vu ngoc inventiones,pelayo_vu_ngoc_con} achieve a classification under some `generic' conditions. At any rate, \vungoc\ \cite{vu ngoc} already associates to a semi-toric system $\semitoric$ a family of labeled convex polygons, i.e.\ convex polygons together with some marked interior points. This is in analogy with Delzant's \cite{delzant}  classification of {\bf symplectic toric manifolds}, i.e.\ triples $(M,\omega, \mu)$, where $\mu = (\mu_1,\mu_2): \sm \to \R^2$ is the moment map of an effective Hamiltonian $\mathbb{T}^2$-action (cf.\ Definition \ref{defin:stm}). The invariants of a triple $(M,\omega,\mu)$ are encoded in a so-called Delzant polygon which is the image of the moment map (cf. Definition \ref{defin:delzant_polytopes}). The labeled convex polygons of semi-toric systems generalize Delzant's polygons associated to symplectic toric manifolds. However, there are two significant differences, both due to the richer behaviour of semi-toric systems caused by the presence of focus-focus singular points (cf. Section \ref{sec:str}, or \vungoc\ \cite{vu_ngoc_ff} and Zung \cite{zung focus focus} for a definition). On the one hand, there may be several labeled convex polygons associated to a semi-toric system, as there are choices involved (cf. the discussion leading to Theorem \ref{th: vu ngoc th 3.8}); on the other, semi-toric systems are not classified by their associated labeled convex polygons, as subtler symplectic invariants appear (cf. Pelayo $\&$ \vungoc\ \cite{pelayo-vu ngoc inventiones,pelayo_vu_ngoc_con}).\\


Given a semi-toric system $(M, \om, \Phi=(J,H))$, there is an underlying Hamiltonian $S^1$-space $\hamiltonian$ obtained by `forgetting' $H$. This begs the following intriguing question: What is the minimal set of invariants of $\semitoric$ needed to recover the labeled directed graph of $\hamiltonian$? This question has been asked in Pelayo $\&$ \vungoc\ \cite[Remark 6.2]{pelayo-vu ngoc inventiones} and its answer is the main result of the present paper, stated loosely below (cf. Theorem \ref{thm: main} for a precise version).

\begin{mr}
  Any labeled convex polygon associated to a semi-toric system $\semitoric$ yields the labeled directed graph associated to the underlying Hamiltonian $S^1$-space $\hamiltonian$.
\end{mr}

The idea of the proof is to exploit the similarities between symplectic toric manifolds and semi-toric systems. For the former, Karshon \cite{karshon} shows how to recover the labeled directed graph of the associated Hamiltonian $S^1$-space (cf. Remark \ref{rk:delham}). Thus the aim is to mimic Karshon's ideas in the semi-toric case. However, semi-toric systems allow for focus-focus singular points which do not occur in the symplectic toric category; this difficulty is overcome by using (a) the so-called Eliasson-Miranda-Zung local normal form which gives control over the geometry of the system near such singularities (cf.\ Section \ref{sec:str}) and (b) the connectedness of the fibers of $\Phi$, an important fact proved by \vungoc\ \cite{vu ngoc}. \\

Semi-toric systems can be naturally divided in two families: those whose underlying Hamiltonian $S^1$-action can be extended to an effective Hamiltonian $\mathbb{T}^2$-action (cf.\ Definitions \ref{ext} and \ref{def: extendable}), and the rest. The former are called \emph{adaptable}, while the latter are \emph{non-adaptable}. Once the main result is proved, this article turns to obtaining a characterization of adaptable systems, stated below.

\begin{introth}
\mbox{}
\begin{enumerate}[label=(\arabic*), ref=(\arabic*)]
\item \label{item:21} A semi-toric system $(M, \om, \Phi)$ is adaptable if and only if one of its associated labeled convex polygons is Delzant.
\item \label{item:22} Let $\semitoric$ be an adaptable system and denote by $\hamiltonian$ its underlying Hamiltonian $S^1$-space. The family of labeled convex polygons associated to $\semitoric$ contains all Delzant polygons whose corresponding symplectic toric manifolds have $\hamiltonian$ as their associated Hamiltonian $S^1$-space. 
\end{enumerate}
\end{introth}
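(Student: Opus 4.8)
The proof will combine four inputs: (i) the Main Result (Theorem~\ref{thm: main}), by which every labeled convex polygon associated to $\semitoric$ determines the labeled directed graph of $\hamiltonian$; (ii) Karshon's classification of Hamiltonian $S^1$-spaces together with her recipe extracting a labeled directed graph from a Delzant polygon (Remark~\ref{rk:delham}); (iii) Delzant's classification of symplectic toric manifolds; and (iv) \vungoc's Theorem~\ref{th: vu ngoc th 3.8}, by which the labeled convex polygons associated to $\semitoric$ form a single orbit $\mathcal{G}\cdot\Delta_0$ of a group $\mathcal{G}$ generated by vertical-axis-preserving piecewise-integral-affine transformations and the focus-focus sign changes. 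The link between the two worlds is the elementary observation that if an associated polygon of $\semitoric$ happens to be Delzant, then the procedure of the Main Result producing its graph coincides with Karshon's recipe: a Delzant polygon carries no non-trivial node monodromy, so in both cases the graph is read off from the boundary slopes and self-intersection data, the vertices and their $S^1$-weights, in exactly the same way (in a Delzant associated polygon the $m_f$ marked points sit so that the corresponding surgeries are trivial). Note also, once and for all, that by the Eliasson-Miranda-Zung normal form (Section~\ref{sec:str}) each focus-focus point of $\semitoric$ is an isolated, non-extremal fixed point of the $S^1$-action with weights $\{+1,-1\}$; hence it appears as such a vertex of the graph of $\hamiltonian$, and in a Delzant polygon producing that graph every such vertex is an honest polygon vertex, since a fixed surface would force a vertical edge, which occurs only at extremal $J$-values.

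For \ref{item:21}, suppose first that an associated polygon $\Delta$ of $\semitoric$ is Delzant. By Delzant's theorem $\Delta=\widetilde{\mu}(\widetilde M)$ for a symplectic toric manifold $(\widetilde M,\widetilde\omega,\widetilde{\mu}=(\widetilde\mu_1,\widetilde\mu_2))$. By the Main Result and the bridge observation, the labeled directed graph of $\hamiltonian$ equals the graph extracted from $\Delta$ by Karshon's recipe, which by Remark~\ref{rk:delham} is the graph of the Hamiltonian $S^1$-space $(\widetilde M,\widetilde\omega,\widetilde\mu_1)$; Karshon's classification then gives $\hamiltonian\cong(\widetilde M,\widetilde\omega,\widetilde\mu_1)$, and transporting the Hamiltonian $\mathbb{T}^2$-action of $\widetilde M$ through the underlying symplectomorphism extends the $S^1$-action of $\hamiltonian$, so $\semitoric$ is adaptable. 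Conversely, assume $\semitoric$ is adaptable, fix an extending Hamiltonian $\mathbb{T}^2$-action with moment map $(J,K)$, and set $\Delta_{\mathrm{tor}}:=(J,K)(M)$, a Delzant polygon whose toric manifold has $\hamiltonian$ as underlying $S^1$-space, so Karshon's recipe applied to $\Delta_{\mathrm{tor}}$ yields the graph of $\hamiltonian$. By the observation above, $\Delta_{\mathrm{tor}}$ has a vertex of $S^1$-weight $\{+1,-1\}$ at the $J$-value of each focus-focus point of $\semitoric$; mark $m_f$ such vertices, one per focus-focus point. I claim the resulting labeled polygon is associated to $\semitoric$, which finishes this direction since $\Delta_{\mathrm{tor}}$ is Delzant. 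This claim, and \ref{item:22} as well, follows from the rigidity statement below.

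\textbf{Rigidity Claim.} Let $\Gamma$ be the labeled directed graph of $\hamiltonian$ and $t_1,\dots,t_{m_f}$ the $J$-values of the focus-focus points of $\semitoric$. Then every rational convex polygon equipped with $m_f$ interior marked points at $J$-values $t_1,\dots,t_{m_f}$ (with admissible sign and multiplicity data) whose associated graph, via the procedure of the Main Result, equals $\Gamma$ lies in the orbit $\mathcal{G}\cdot\Delta_0$, hence is a labeled convex polygon associated to $\semitoric$. I would prove this from the explicit procedure behind the Main Result: that procedure reconstructs $\Gamma$ from the boundary-slope data (encoding the extremal isolated fixed points and the chains of $S^1$-invariant spheres with their normalized self-intersections), the vertices with their weights, and the positions of the marked points; one then checks that $\mathcal{G}$ acts transitively on the configurations with a fixed such read-off, because the vertical-axis-preserving piecewise-affine transformations act transitively on the admissible boundary-slope data without changing $\Gamma$, while each focus-focus sign change is exactly the surgery that pushes a marked point across the vertical line through it, interchanging ``marked point'' and ``weight-$\{+1,-1\}$ vertex'' at that $J$-value, again without altering $\Gamma$. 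Granting this, in \ref{item:21} the marked $\Delta_{\mathrm{tor}}$ and $\Delta_0$ both have associated graph $\Gamma$ with marked points at $t_1,\dots,t_{m_f}$ and so lie in the same $\mathcal{G}$-orbit; and in \ref{item:22}, if $\Delta$ is a Delzant polygon whose toric manifold has $\hamiltonian$ as underlying $S^1$-space, then Karshon's recipe (= the Main Result's procedure, by the bridge observation) gives $\Gamma$ from $\Delta$, and marking $m_f$ of its weight-$\{+1,-1\}$ vertices at $J$-values $t_1,\dots,t_{m_f}$ again lands it in $\mathcal{G}\cdot\Delta_0$, i.e.\ among the labeled convex polygons associated to $\semitoric$.

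The main obstacle is the Rigidity Claim, precisely the transitivity of $\mathcal{G}$ on the polygons with a prescribed labeled directed graph. Establishing it requires a careful dictionary between Karshon's combinatorial invariants and the features of a semi-toric polygon, including the bookkeeping of self-intersection numbers of $S^1$-invariant spheres under the piecewise-affine transformations, and a proof that the focus-focus sign-change surgeries are exactly the moves interchanging the possible toric resolutions of an interior weight-$\{+1,-1\}$ fixed point while leaving the graph untouched; the connectedness of the fibers of $\Phi$ and the Eliasson-Miranda-Zung normal form will be needed to anchor this dictionary near the focus-focus values.
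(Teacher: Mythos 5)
Your forward direction of part \ref{item:21} --- if some $\polygon$ is Delzant then $\semitoric$ is adaptable --- is a legitimate alternative to the paper's argument: you realize $\polygon$ as the moment polygon of a symplectic toric manifold, match the two graph-reading recipes, and invoke Karshon's classification, whereas the paper proves the contrapositive by showing directly (Lemma \ref{lemma:lff} together with the case analysis of Corollary \ref{1-6}) that every semi-toric polygon of a non-adaptable system has a non-smooth vertex. Your ``bridge observation'' does need more care than ``the surgeries are trivial'': the $m_f$ focus-focus vertices of $\Gamma$ are matched not by the interior marked points themselves but by the smooth fake vertices that the cuts create on $\partial\polygon$, and one must check (via Lemma \ref{lemma:lff}) that in a Delzant $\polygon$ these have degree one, do not lie on $\Z_k$-chains, and hence carry weights $\{+1,-1\}$ at the correct $J$-values, while every $\Z_k$-chain is a single edge and no hidden Delzant vertices occur. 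These checks all go through, so this half is sound.

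The genuine gap is the Rigidity Claim, on which both the converse of \ref{item:21} and all of \ref{item:22} rest. The assertion that $\mathcal{G}$ ``acts transitively on the configurations with a fixed read-off'' is not a reduction of the problem: it \emph{is} the problem. Two non-$\msT$-equivalent Delzant polygons can classify non-isomorphic toric manifolds whose underlying Hamiltonian $S^1$-spaces are isomorphic, so nothing formal forces them into a single orbit of vertical shears and cut-switches; what forces it is a concrete geometric argument. The paper supplies this as Theorem \ref{prop:smoothvertex}: one decomposes $\Phi(M)$ and $\Delta$ into slices bounded by the vertical lines through the focus-focus values and proves by induction that, at each such line, the isotropy-weight data (Propositions \ref{prop: weights extendable ff} and \ref{prop: elliptic-elliptic weights}), the adaptability alternatives \ref{item:14} and \ref{item:16}, and the convexity of \emph{both} polygons leave exactly one admissible choice of cut direction, and that with this choice the outgoing primitive edge vectors --- hence the next slice --- must coincide with those of $\Delta$. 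Your sketch replaces this induction with ``one then checks,'' and moreover states the claim for arbitrary rational convex polygons with marked points, for which the graph-reading procedure is not even defined without first specifying which vertices are Delzant, hidden Delzant or fake; as stated it would need the admissibility constraints of Lemma \ref{lemma:df} and Theorem \ref{th: vu ngoc th 5.3} built in to have a chance of being true. Until that transitivity is actually proved --- and the natural proof is precisely the paper's slice-by-slice induction --- the proposal does not establish the statement.
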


Part \ref{item:21} is Theorem \ref{smooth}; part \ref{item:22} corresponds to Theorem \ref{prop:smoothvertex} and Corollary \ref{cor:all} and is in fact used to prove one of the two implications of Theorem \ref{smooth}. It generalizes a phenomenon that occurs in all examples of adaptable semi-toric systems in the literature (cf.\ Sadovski{\'{\i}}  $\&$ Z{\^h}ilinski{\'{\i}} \cite{sad_zhi}). The other implication of Theorem \ref{smooth} is obtained by giving a characterization of non-adaptable systems both near fibers containing focus-focus points and globally (cf. Proposition \ref{prop:nad}). Moreover, an explicit example of a non-adaptable system is constructed in Example \ref{example: na}, which, to the best of our knowledge, is the first of its kind. Current work in progress \cite{hss}, inspired by the work of Karshon $\&$ Kessler $\&$ Pinsonnault \cite{kkp}, is concerned with how `wild' the category of semi-toric systems can be.

\subsection*{Organization of the paper}

After the introduction, Section \ref{sec:Hst} recalls the definition and properties of both Hamiltonian $S^1$-spaces and semi-toric systems; many results are quoted with references where to find the proofs and more details.
Section \ref{sec: from VN to K} states and proves the main result of the article. The proof of Theorem \ref{thm: main} is broken down into several steps. Section \ref{sec:adapt-non-adapt} studies adaptable  and non-adaptable semi-toric systems,
which are characterized by Theorem \ref{smooth}, Theorem \ref{prop:smoothvertex}, and Propositions \ref{prop:nad} and \ref{prop:nonsmoothvertex}.

\subsection*{Conventions}
In the whole article, $\sm$ denotes a connected, closed symplectic manifold. Unless otherwise stated, group actions on manifolds are effective, i.e.\ there are no non-trivial elements of the group which act trivially on the whole space. The identification $S^1 = \R/2\pi\Z$ is used throughout.

\subsection*{Acknowledgements}
We would like to thank Tudor Ratiu for suggesting the problem and providing generous funding for meeting at the IST Lisbon and the EPFL. We would like to thank both institutions for their kind hospitality. Moreover, we are indebted to Miguel Abreu, Tudor Ratiu and Margaret Symington for interesting conversations. 

\noindent
D.\ Sepe was supported by FCT postdoctoral fellowship SFRH/BPD/77263/2011.


\section{Hamiltonian $S^1$-spaces and semi-toric systems}

\label{sec:Hst}

Let $(M, \om)$ be a $2n$-dimensional symplectic manifold. 
Since $\om$ is non-degenerate, it induces an isomorphism of vector bundles 
\begin{equation}
  \label{eq:2}
  \begin{split}
    \omega^{\#}: \cotan M &\to \mathrm{T}M \\
    \alpha &\mapsto  X^{\al},
  \end{split}
\end{equation}
\noindent
where $\omega(X^{\al},\cdot) = -\al$. Let $\mathrm{C}^{\infty}(M)$ denote the vector space of smooth functions and let $\diff$ be the exterior differential. The \textbf{Hamiltonian vector field} associated to $F \in \mathrm{C}^{\infty}(M)$ is defined as $X^F = \omega^{\#}(\diff F)$. The Poisson bracket $\{\cdot,\cdot\} : \mathrm{C}^{\infty}(M) \times \mathrm{C}^{\infty}(M) \to \mathrm{C}^{\infty}(M)$ induced by $\omega$ is given by
$$  \{F_1, F_2\}:= \om(X^{F_1}, X^{F_2}). $$

\begin{defin}[{\bf Hamiltonian $\R^k$-actions}]\label{defin:cihs}
A {\em Hamiltonian $\R^k$-action} on $(M, \om)$ is a smooth map $\Phi:=(F_1, \dots, F_k) : M \to \R^k$ satisfying

\begin{enumerate}[label=(\Roman*),ref=(\Roman*)]
\item \label{item:3} $\{F_i, F_j\}=0$ for all $1 \leq i, j \leq k$;
\item \label{item:4} $\diff F_1\wedge \ldots \wedge \diff F_k \neq 0$ almost everywhere.
\end{enumerate}
The triple $(M,\om,\Phi)$ is henceforth referred to as a \emph{Hamiltonian $\R^k$-space}, and $\Phi$ is the {\em moment map}.
\end{defin}

To see that Definition \ref{defin:cihs} yields an $\R^k$-action, let $X^{F_1},\ldots, X^{F_k}$ be the Hamiltonian vector fields associated to $F_1,\ldots,F_k$, and denote by $\varphi_t^1,\ldots,\varphi_t^k$ the corresponding flows. These exist for all $t \in \R$ by compactness of $M$. Moreover, property \ref{item:3} implies that they pairwise commute. Then the $\R^k$-action is given by
\begin{equation*}
 \begin{split}
    \R^k \times M &\to M \\
    (t_1, \dots, t_k)\cdot p&:= \varphi_{t_1}^{1}\circ \cdots \circ \varphi_{t_k}^{k}(p).
  \end{split}
\end{equation*}

Two families of Hamiltonian $\R^k$-spaces play an important role in this paper, namely

\begin{itemize}
\item {\bf completely integrable Hamiltonian systems} when $k=n$ in Definition \ref{defin:cihs}.
\item {\bf Hamiltonian $\T^k$-spaces} if the flows of $X^{F_1},\ldots,X^{F_k}$ are periodic, and the induced torus action
is effective.
\end{itemize}

Henceforth $(M,\omega)$ is taken to be $4$-dimensional, unless otherwise stated.


\subsection{Hamiltonian $S^1$-spaces}\label{def karshon graph}

The aim of this subsection is to introduce Hamiltonian $S^1$-spaces and describe their invariants, as constructed in Karshon \cite{karshon}.

\begin{defin}[\textbf{Hamiltonian $S^1$-spaces}]\label{defn:ham_circle_space}
The category $\Hs$ is defined by:
\begin{itemize}
\item\emph{Objects}: Hamiltonian $S^1$-spaces $(M,\om,J)$.
\item\emph{Morphisms}: symplectomorphisms $\Psi\colon (M_1,\omega_1)\to (M_2,\omega_2)$ making the following diagram
$$ \xymatrix{ (M_1,\omega_1) \ar[rr]^-{\Psi} \ar[dr]_-{J_1} & & (M_2,\omega_2) \ar[dl]^-{J_2} \\
	& \R &	}$$
commute. \\
These are henceforth denoted by $\Psi : (M_1,\omega_1,J_1) \to (M_2,\omega_2,J_2)$
and referred to as isomorphisms of Hamiltonian $S^1$-spaces.
\end{itemize}
\end{defin}

\begin{remark}
  Observe that commutativity of the above diagram implies that the symplectomorphism $\Psi$ is equivariant.
\end{remark}

\begin{example}\label{exm:cp2}
  Consider $\cp$ with homogeneous complex coordinates $[z_0:z_1:z_2]$ and the (standard) Fubini-Study symplectic form $\omega_{FS}$. The map $J : \cp \to \R$ defined by 
\begin{equation*}
 [z_0:z_1:z_2] \mapsto -\frac{1}{2}\Bigg(\frac{|z_1|^2}{|z_0|^2+|z_1|^2+|z_2|^2} +2\frac{|z_2|^2}{|z_0|^2+|z_1|^2+|z_2|^2}\Bigg)
\end{equation*}
\noindent
is the moment map of the following effective Hamiltonian $S^1$-action
$$ \lambda \cdot  [z_0:z_1:z_2] =  [z_0:\lambda z_1:\lambda^2 z_2], $$
\noindent
where $\lambda \in S^1$. Thus the triple $(\cp, \omega_{FS}, J)$ defines an object in $\Hs$.
\end{example} 

A source of interesting examples of Hamiltonian $S^1$-spaces is provided by symplectic toric manifolds, which are defined below.

\begin{defin}[{\bf Symplectic toric manifold}]\label{defin:stm}
 A \emph{symplectic toric manifold} is a 
 Hamiltonian $\T^2$-space $(M,\omega,\mu)$, where $\mu =(\mu_1,\mu_2) : \sm \to \R^2$.
 \end{defin}

\begin{remark}\label{rmk:ext}
Given a symplectic toric manifold, there are several ways to obtain a Hamiltonian $S^1$-space, corresponding to restricting the action to a subgroup $S^1 \subset  \T^2$. Throughout this paper, the triple $(M,\omega,\mu_1)$ is henceforth referred to as the Hamiltonian $S^1$-space \emph{associated} to $(M,\omega,\mu =(\mu_1,\mu_2))$. It is important to notice that not all Hamiltonian $S^1$-spaces arise in this fashion (cf.\ Example \ref{example:ne}).
\end{remark}


\subsection*{Karshon's classification}
The classification of Hamiltonian $S^1$-spaces up to isomorphism has been carried out in Karshon \cite{karshon}, and is recalled
below without proofs in order to introduce ideas and notation used in the rest of the paper. \\

Let $(M,\omega,J)$ be a Hamiltonian $S^1$-space. 
For each subgroup $G\subset S^1$, let $M^G$ be the set of points in $M$ whose stabilizer is $G$.
The connected
components of $M^{S^1}$ are symplectic submanifolds, hence either points or surfaces (since the action is effective); this follows from the following { \bf local normal form}.

\begin{lemma}[Karshon \cite{karshon}, Cor.\ A.7]
\label{lemma: S^1 action normal form}
For each $p\in M^{S^1}$ there exist neighborhoods $U\subset M$ of $p$,  $U_0\subset \C^2$ of $(0,0)$, 
and a symplectomorphism $\Psi\colon(U,\omega)\to (U_0,\omega_0)$, where $\omega_0=\frac{i}{2}(dz_1\wedge d\bar{z}_1+dz_2\wedge d\bar{z}_2)$
making the following diagram commute
$$ \xymatrix{(U,\omega) \ar[rr]^-{\Psi} \ar[rd]_-{J} && (U_0,\omega_0) \ar[dl]^-{J_0} \\
& \R, &} $$ 
\noindent 
with  $J_0(z_1, z_2)= J(p) + \frac{m_1}{2} \abs{z_1}^2 + \frac{m_2}{2} \abs{z_2}^2$.
\end{lemma}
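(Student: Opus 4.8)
The plan is to derive the statement from the equivariant Darboux theorem together with a standard analysis of the linear isotropy representation at the fixed point $p$.

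First I would examine the linearized action. Since $p\in M^{S^1}$ is fixed, the $S^1$-action induces a linear symplectic representation $\rho\colon S^1\to \mathrm{Sp}(\mathrm{T}_pM,\om_p)$. Averaging an arbitrary $\om_p$-compatible complex structure on $\mathrm{T}_pM$ over the (compact) group $S^1$ produces an invariant compatible complex structure, so that $(\mathrm{T}_pM,\om_p)$ becomes a Hermitian vector space on which $S^1$ acts unitarily. Decomposing into complex weight subspaces yields integers $m_1,m_2$ and a complex-linear symplectomorphism $(\mathrm{T}_pM,\om_p)\cong(\C^2,\om_0 \text{ at the origin})$ conjugating $\rho$ to the standard action $\theta\cdot(z_1,z_2)=(e^{im_1\theta}z_1,e^{im_2\theta}z_2)$ (allowing $m_1=m_2$ when there is a single weight space).

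Next I would linearize the action near $p$ and run an equivariant Moser argument. By Bochner's linearization theorem there is an $S^1$-equivariant chart around $p$ in which the action is exactly the linear model above; pushing $\om$ through this chart gives an $S^1$-invariant symplectic form on a neighborhood of $0\in\C^2$ agreeing with $\om_0$ at the origin. Applying the Poincaré lemma and averaging over $S^1$, one obtains an invariant primitive of the difference of the two forms, hence an $S^1$-invariant time-dependent vector field whose flow interpolates, via the usual Moser homotopy along $\om_t=\om_0+t(\om-\om_0)$, between them. Composing the time-one map with the Bochner chart and shrinking to a sufficiently small $U$ gives the desired $S^1$-equivariant symplectomorphism $\Psi\colon(U,\om)\to(U_0,\om_0)$ with $\Psi(p)=0$.

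Finally I would identify $J$ in these coordinates. Equivariance of $\Psi$ forces $J_0:=J\circ\Psi^{-1}$ to be a moment map for the standard linear $S^1$-action on $(U_0,\om_0)$; a direct computation shows $(z_1,z_2)\mapsto\frac{m_1}{2}\abs{z_1}^2+\frac{m_2}{2}\abs{z_2}^2$ is also one, since its $\om_0$-Hamiltonian vector field generates precisely $\theta\cdot(z_1,z_2)=(e^{im_1\theta}z_1,e^{im_2\theta}z_2)$. Two moment maps for the same action on the connected $U_0$ differ by a constant, so $J_0=c+\frac{m_1}{2}\abs{z_1}^2+\frac{m_2}{2}\abs{z_2}^2$, and evaluating at the origin gives $c=J(p)$, which is the claim. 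The only genuinely technical ingredient is the equivariant Moser step; this is classical (see Karshon \cite{karshon} and the references therein), and everything else is bookkeeping about unitary $S^1$-representations. As a byproduct one reads off that $M^{S^1}$ is locally a point (when $m_1,m_2\neq 0$) or a symplectic surface (when one weight vanishes), justifying the remark preceding the lemma.
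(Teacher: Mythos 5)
Your argument is correct, but note that the paper does not prove this lemma at all: it is quoted verbatim from Karshon's monograph (Cor.\ A.7), whose own proof is precisely the equivariant Darboux/Moser argument you outline, so there is nothing to diverge from. The only imprecision worth fixing is in the first step: the space of $\om_p$-compatible complex structures is not convex, so you cannot average a compatible complex structure directly over $S^1$; instead average a compatible Riemannian metric to make it invariant and then take the canonical compatible $\mathcal{J}$ produced from the pair $(g,\om_p)$ by polar decomposition, which is automatically $S^1$-invariant. The rest --- weight decomposition of the unitary isotropy representation, Bochner linearization, the invariant Moser primitive, and the observation that two moment maps for the same $S^1$-action on a connected neighborhood differ by a constant --- is exactly the standard bookkeeping, and your sign conventions are consistent with the paper's $\om(X^F,\cdot)=-\diff F$.
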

\begin{remark}\label{rmk:w}
Since the action is effective,
the integers $m_1$ and $m_2$ are relatively prime and are called the {\bf isotropy weights} at $p$.
\end{remark}

An important role in the classification of Hamiltonian $S^1$-spaces is played by the subsets which are stabilized by $\Z_k:=\Z \slash k \Z\subset S^1$, the cyclic subgroup of order $k>1$.

\begin{lemma}[Karshon \cite{karshon}, Lemma 2.2]
\label{lemma: Zk spheres}
The closure of each connected component of $M^{\Z_k}$ is a symplectic sphere on which $S^1\slash \Z_k$ acts with two fixed points, which are isolated fixed points in $M^{S^1}$. 
\end{lemma}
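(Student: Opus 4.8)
The plan is to work locally around points of $M^{\Z_k}$ using the local normal form, and then to use a connectedness/compactness argument to globalize. First I would observe that $M^{\Z_k}$ is a (possibly disconnected) submanifold: near any $q \in M^{\Z_k}$, choose an $S^1$-invariant neighborhood and an equivariant Darboux chart; the subgroup $\Z_k$ acts linearly, and the fixed-point set of a finite cyclic group acting linearly and symplectically on $\C^2$ is a complex (hence symplectic) linear subspace. Since the $S^1$-action is effective, no point has stabilizer all of $S^1$ unless it lies in $M^{S^1}$; and a generic point of a component of $M^{\Z_k}$ has stabilizer exactly $\Z_k$. The local model shows the component has real dimension $2$, i.e.\ it is a surface on which the quotient circle $S^1/\Z_k$ acts.

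Next I would analyze the boundary behaviour. Let $C$ be the closure of a connected component $N$ of $M^{\Z_k}$. The $S^1/\Z_k \cong S^1$ action on $N$ is a circle action on a symplectic surface; its fixed points are exactly the points of $C$ where the stabilizer jumps from $\Z_k$ to something strictly larger, and effectiveness forces the stabilizer there to be all of $S^1$, so these points lie in $M^{S^1}$. At such a point $p$, apply the local normal form of Lemma \ref{lemma: S^1 action normal form}: in coordinates $(z_1,z_2)$ the action is $\lambda\cdot(z_1,z_2) = (\lambda^{m_1}z_1, \lambda^{m_2}z_2)$ with $\gcd(m_1,m_2)=1$. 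The points with stabilizer exactly $\Z_k$ form the locus $\{z_2=0\}$ or $\{z_1=0\}$ according to which of $|m_i|$ equals $k$ (exactly one can, since $\gcd(m_1,m_2)=1$ and $k>1$), and on that complex line the residual circle acts with a single fixed point at the origin, with nonzero weight. So near each such $p$, $C$ is a smooth disc and $p$ is an isolated fixed point of the residual action, isolated also in $M^{S^1}$ because the other weight is $\pm 1$, giving a free residual action transverse to $C$.

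Then I would run the standard "travel along the sphere" argument. Starting from one fixed point $p_0 \in M^{S^1} \cap C$ with residual weight, follow the flow of the residual $S^1/\Z_k$-action through $N$; since $N$ has stabilizer $\Z_k$ everywhere and is a symplectic surface with a circle action, and since $M$ is compact so orbits have compact closure, the orbit closure through a point of $N$ must limit onto fixed points of the residual action, i.e.\ onto points of $M^{S^1}$. A connected symplectic surface with an effective circle action having fixed points, and which is closed, is necessarily a $2$-sphere (this is the classical classification of Hamiltonian circle actions on closed surfaces, or can be seen directly from the normal form at the two poles: the surface is built from two discs glued along a free orbit). Hence $C$ is a $2$-sphere, the residual $S^1/\Z_k$ acts on it with exactly two fixed points $p_0, p_\infty$, and by the previous paragraph each is an isolated point of $M^{S^1}$.

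The main obstacle I anticipate is controlling the closure $C = \overline{N}$ carefully: a priori one must rule out that $N$ accumulates onto higher-dimensional components of $M^{S^1}$ (surfaces) or onto other strata, and one must check that exactly two fixed points appear rather than the surface "running off to the boundary" in a more complicated way. This is handled by the effectiveness hypothesis together with the precise weight bookkeeping in the local normal form (one weight is $\pm k$, the other $\pm 1$), which forces the residual action to be free near $N$ away from isolated fixed points and pins down the two-pole picture; compactness of $M$ then closes the argument.
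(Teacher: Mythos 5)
The paper offers no proof of this statement: it is quoted from Karshon's memoir (her Lemma 2.2) in a passage the authors explicitly introduce as ``recalled below without proofs'', so the only comparison available is with Karshon's original argument --- which is essentially the one you reconstruct (linearize the $\Z_k$-action to see that the exact-$\Z_k$ stratum is a $2$-dimensional symplectic surface on which $S^1/\Z_k$ acts freely, use the local normal form at accumulation points to see that the closure adds only isolated $S^1$-fixed points, and conclude that the closure is a two-pole sphere).

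The one genuine error is your claim that at a pole the ``other'' isotropy weight is $\pm 1$: coprimality of $(m_1,m_2)$ together with $|m_1|=k$ only forces $m_2\neq 0$, not $|m_2|=1$. For the action $\lambda\cdot[z_0:z_1:z_2]=[z_0:\lambda^2 z_1:\lambda^3 z_2]$ on $\cp$, the fixed point $[1:0:0]$ has weights $2$ and $3$ and is a pole of a $\Z_2$-sphere whose normal weight is $3$; in particular the residual action transverse to the sphere need not be free either. Fortunately nothing in your argument actually requires $|m_2|=1$: the pole is isolated in $M^{S^1}$ because both weights are nonzero (if $m_2=0$ then $\gcd(m_1,m_2)=k>1$), and the same observation is what rules out $\Z_k$-points accumulating on a fixed surface, where one weight vanishes and the other is $\pm 1$. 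With that justification substituted, the proof is complete and coincides with Karshon's.
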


Such submanifolds are called {\bf $\Z_k$-spheres}, $k$ being the {\bf isotropy weight}, and the minimum (respectively maximum) of $J$ on a $\Z_k$-sphere is called south (respectively north) pole. 
\\

The work in Karshon \cite{karshon} provides an algorithm which associates a {\bf labeled directed graph} $\Gamma=(V,E)$ to (the isomorphism class of) $(M, \om, J)$:
\begin{longtable}{l p{90mm}} 
{\bf Vertex set $V$:}  &
To every component in $M^{S^1}$ associate a vertex. Those associated to surfaces are drawn as `fat vertices'.
\\
{\bf Labeling of $V$:} &
Label each vertex by the value of $J$ on the corresponding component of the fixed point set. If it is extremal (maximal or minimal), call the vertex {\em extremal} ({\em maximal} or {\em minimal}).

To a fat vertex add the genus of the corresponding surface $\Sigma$ and its normalized symplectic area $\frac{1}{2 \pi} \int_\Sigma \om$ as additional labels.
\\
{\bf Edge set $E$:} &
Every $\Z_k$-sphere gives rise to a directed edge going from its south to its north pole.
\\
{\bf Labeling of $E$:} &
Label each edge with the isotropy weight of the corresponding $\Z_k$-sphere.
\end{longtable}

\begin{remark}\label{rmk:properties}
Not every labeled directed graph arises as the one associated to some $(M,\omega,J)$.
For instance fat vertices can only occur at the minimum or maximum of $J$, and there are
no edges incidents to them (cf.\ Karshon \cite[Section 2.1]{karshon}).
\end{remark}

Such labeled directed graphs classify Hamiltonian $S^1$-spaces up to isomorphism:

\begin{theorem}[Karshon \cite{karshon}, Th.\ 4.1]
\label{th: karshon classification}
Two Hamiltonian $S^1$-spaces are isomorphic if and only if their associated directed labeled graphs are equal.
\end{theorem}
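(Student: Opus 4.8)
The statement to prove is Karshon's classification theorem (Theorem \ref{th: karshon classification}): two Hamiltonian $S^1$-spaces are isomorphic if and only if their associated labeled directed graphs coincide. Since this is quoted from Karshon \cite{karshon} I would not reprove it in full, but here is how I would structure the argument if forced to. The "only if" direction is essentially formal: an isomorphism $\Psi : (M_1,\omega_1,J_1) \to (M_2,\omega_2,J_2)$ is $S^1$-equivariant (by the Remark following Definition \ref{defn:ham_circle_space}), hence it carries $M_1^G$ to $M_2^G$ for every subgroup $G \subset S^1$, preserves connected components, preserves the value of the moment map (by commutativity of the triangle), preserves symplectic areas of surfaces and their genera, and carries $\Z_k$-spheres to $\Z_k$-spheres with the same isotropy weight. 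Reading off the definitions of the vertex set, edge set, and all labels, one sees $\Gamma(M_1,\omega_1,J_1) = \Gamma(M_2,\omega_2,J_2)$.

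The "if" direction is the substantial one. The plan is to show that the labeled directed graph determines the space by an explicit reconstruction/uniqueness argument. First I would reduce to a normal form along the gradient-like flow of $J$: using the local model of Lemma \ref{lemma: S^1 action normal form} near fixed surfaces and isolated fixed points, and Lemma \ref{lemma: Zk spheres} to control the $\Z_k$-spheres, one decomposes $M$ into pieces lying over intervals of $J$-values between consecutive critical levels. Over a regular interval the $S^1$-action is free or has only finite stabilizers along the $\Z_k$-spheres, and the reduced spaces $J^{-1}(c)/S^1$ are 2-orbifolds whose topology changes in a controlled way as $c$ crosses a critical value (blow-up/blow-down at an isolated fixed point, birth of a $\Z_k$-sphere, etc.). The key point is that each such elementary modification is determined up to equivariant symplectomorphism by the local data recorded in the graph — the $J$-values, the isotropy weights on incident edges, and for the extremal fat vertices the area and genus. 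Then one glues: given two spaces with equal graphs, one builds an equivariant symplectomorphism level-set by level-set, extending across each critical value using the uniqueness of the local models and a Moser-type argument to match the symplectic forms.

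The main obstacle is precisely this gluing/uniqueness step: controlling the symplectic form globally rather than just the smooth $S^1$-manifold structure. One needs an equivariant Moser argument that is compatible with the decomposition along $J$, and one must check that the area labels on fat vertices, together with the areas of the $\Z_k$-spheres (which are themselves determined by the weights and $J$-values via the local models), pin down the cohomology class of $\omega$ on each piece — and hence, by the equivariant Moser theorem, the form up to equivariant symplectomorphism. Handling the fixed surfaces of positive genus at the extrema, where the reduced space is not simply a disk, requires extra care. I would expect the bookkeeping of which finite subgroups occur, and ensuring the reconstructed graph has no spurious edges or vertices (cf. Remark \ref{rmk:properties}), to be routine but lengthy; the genuinely delicate analytic input is the equivariant Moser stability along the moment map, which is where Karshon's monograph does the real work, so in the present paper one simply invokes \cite[Th.~4.1]{karshon}.
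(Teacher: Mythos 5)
The paper does not prove this statement at all: it is imported verbatim from Karshon's memoir (the surrounding text says the classification ``is recalled below without proofs''), so there is no in-paper argument to compare yours against, and simply citing \cite{karshon} is all that is required here. That said, your sketch is a fair reflection of what happens in the source. The ``only if'' direction as you write it is complete and correct: equivariance of $\Psi$ plus commutativity of the moment-map triangle immediately matches fixed components, $J$-values, genera, areas, $\Z_k$-spheres and their weights, hence the graphs. For the ``if'' direction your outline captures the actual strategy of Karshon's proof --- local normal forms near fixed points and level sets, control of how the reduced spaces change across critical values, and a step-by-step extension of an equivariant symplectomorphism in the direction of increasing $J$ --- and you correctly identify the genuinely hard points (the equivariant Moser/extension step across critical levels, and pinning down the symplectic form from the area labels). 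Two remarks: first, Karshon's argument leans heavily on the gradient spheres of Remark \ref{rmk:grad} and on uniqueness of equivariant symplectic blow-ups, which your sketch only implicitly touches; second, your parenthetical claim that the areas of the $\Z_k$-spheres are determined by the weights and $J$-values is true but is itself a lemma (a Duistermaat--Heckman/localization computation), not an obvious fact. Neither point is a defect in your submission, since you explicitly defer the substance to \cite[Th.~4.1]{karshon}, which is exactly what the paper does.
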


\begin{remark}\label{rmk:grad}
An important role in the proof of Theorem \ref{th: karshon classification} is played by the so-called gradient spheres, whose definition is recalled below. Fix $(M,\omega, J)$ and let $g$ be a compatible metric, i.e.\ an $S^1$-invariant Riemannian metric
such that the endomorphism $\mathcal{J}\colon \mathrm{T}M\to \mathrm{T}M$ defined by $g(u,v)=\omega(u,\mathcal{J}(v))$ is an
almost complex structure. Thus the gradient vector field of the moment map $J$ satisfies
$$
\operatorname{grad}(J)=-\mathcal{J}(X^J).
$$
By invariance of the metric, the flow generated by $\mathcal{J}(X^{J})$ commutes with the circle action, thus obtaining
an $\R \times S^1\simeq \C^{\times}$-action.
The closure of each $\C^\times$-orbit is a topological sphere, called a \emph{gradient sphere}; as above, the minimum (respectively maximum) of $J$ along one such sphere is called the south (respectively north) pole. A gradient sphere 
is \emph{free} if its stabilizer is trivial.
A \emph{chain of gradient spheres} is a sequence $C_1,\ldots,C_l$ of gradient spheres such that the south pole of $C_1$
is a minimum of $J$, the north pole of $C_{i-1}$ coincides with the south pole of $C_i$, for every $i=2,\ldots,l$, and the north
pole of $C_l$ is a maximum for $J$. A chain of gradient spheres is \emph{trivial} if it consists only of one free gradient sphere,
and \emph{non trivial} otherwise.

\end{remark}

In Karshon \cite{karshon} particular attention is given to the relation between Hamiltonian $S^1$-spaces and symplectic toric manifolds. The latter have been classified in Delzant \cite{delzant}, 
where a special role is played by a family of convex polygons defined below.

\begin{defin}[{\bf Delzant polygon}]\label{defin:delzant_polytopes}
  \mbox{}
  \begin{itemize}
  \item A convex polygon $\Delta \subset \R^2$ is {\em simple} if there are exactly $2$ edges meeting at each vertex.
  \item A simple polygon $\Delta$ is {\em rational} if all edges have rational slope, i.e.\ they are subsets of straight lines of the form $\mathbf{x} + s \mathbf{u}_i$ for $\mathbf{x} \in \R^2$, $\mathbf{u}_i \in \Z^2$ primitive, $s \in [0, \infty[$ and $i=1,2$.
  \item A vertex of a simple, rational, convex polygon is {\em smooth} if $\Z\langle \mathbf{u}_1, \mathbf{u}_2\rangle = \Z^2$.
  \end{itemize}
  A simple, rational, convex polygon whose vertices are smooth is said to be \emph{Delzant}.
\end{defin}

Given a symplectic toric manifold $(M,\omega,\mu)$, the image $\mu(M):=\Delta$ is a Delzant polygon and, conversely, any Delzant polygon $\Delta$ determines (up to 
$\T^2$-equivariant symplectomorphisms preserving the moment map) a symplectic toric manifold (cf.\ Delzant \cite{delzant}). A natural question to ask is which Hamiltonian $S^1$-spaces arise as those associated to symplectic toric manifolds (cf.\ Remark \ref{rmk:ext}). To this end, Karshon \cite{karshon} proves the following.

\begin{theorem}[Karshon \cite{karshon}, Prop.\ 5.16 and 5.21]
\label{prop: karshon prop 5.21}
Given a Hamiltonian $S^1$-space $(M,\omega, J)$, the following are equivalent:
\begin{enumerate}[label=(E\arabic*), ref=(E\arabic*)]
\item \label{item:15} The $S^1$-action extends to an effective Hamiltonian 2-torus action with moment map given by $(J,H) : M \to \R^2$, i.e.\ the triple $(M,\omega, (J,H))$ is a symplectic toric manifold.
\item \label{item:17} Each fixed surface has genus zero and each non-extremal level set of $J$ contains at most two non-free orbits.
\item \label{item:18} Each fixed surface has genus zero and there is a compatible metric for which there are no more than two non-trivial chains of gradient spheres.
\end{enumerate}
\end{theorem}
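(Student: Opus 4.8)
The plan is to prove the equivalence $\ref{item:15} \Leftrightarrow \ref{item:17} \Leftrightarrow \ref{item:18}$ by a cycle of implications, exploiting the structure of the labeled directed graph $\Gamma = (V,E)$ and the description of gradient spheres recalled in Remark \ref{rmk:grad}. The starting observation is that all three conditions are invariants of the isomorphism class of $(M,\omega,J)$, hence (by Theorem \ref{th: karshon classification}) can be read off $\Gamma$; so one may work combinatorially once a compatible metric has been fixed.

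\textbf{$\ref{item:15} \Rightarrow \ref{item:17}$.} Suppose the $S^1$-action extends to an effective $\T^2$-action with moment map $(J,H)$, so that $(M,\omega,(J,H))$ is a symplectic toric manifold with Delzant polygon $\Delta = (J,H)(M)$. Since the $\T^2$-fixed points are the preimages of the vertices of $\Delta$ and the $S^1$-fixed surfaces would have to be $\T^2$-invariant, one argues that each connected component of $M^{S^1}$ is either a $\T^2$-fixed point or a $\T^2$-invariant symplectic sphere lying over an edge of $\Delta$ on which $J$ is constant; in either case the genus is zero. For the level sets: a non-extremal level $J^{-1}(c)$ meets the polygon $\Delta$ in a segment (by convexity of $\Delta$ and the fact that $J$ is one of the coordinate functions), and the non-free $S^1$-orbits in $J^{-1}(c)$ correspond to points of this segment lying on an edge of $\Delta$ or at a vertex; the endpoints of the segment contribute the only such points, so there are at most two. (Here one uses the Delzant/smoothness condition to rule out interior lattice-direction degeneracies.)

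\textbf{$\ref{item:17} \Rightarrow \ref{item:18}$.} Fix any compatible metric $g$ as in Remark \ref{rmk:grad} and consider the $\C^\times$-action it generates. A non-trivial chain of gradient spheres passes, at each interior node, through a non-extremal fixed point, and near such a point the two gradient spheres of the chain are tangent to the two (distinct) coordinate directions of the Karshon normal form of Lemma \ref{lemma: S^1 action normal form}, i.e.\ they meet the level set $J^{-1}(c)$ of that critical value in two distinct non-free orbits; an interior $\Z_k$-sphere likewise contributes a non-free orbit to its polar level sets. One shows that the number of non-trivial chains is controlled by the maximal number of non-free orbits appearing in a single non-extremal level set, so the hypothesis in \ref{item:17} forces at most two non-trivial chains. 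The genus-zero condition carries over verbatim.

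\textbf{$\ref{item:18} \Rightarrow \ref{item:15}$.} This is the substantive implication and I expect it to be the main obstacle. The idea is to construct the extending circle action (equivalently, the function $H$) directly. With a compatible metric having at most two non-trivial chains of gradient spheres, one builds the second moment map component by integrating along the gradient flow transverse to the chains: on the open dense set of free $S^1$-orbits one has a principal circle bundle over a surface with boundary, and the at-most-two-chains condition plus genus zero guarantee this base is a disk (or more precisely a polygon's worth of data), so the bundle is trivial and one can choose a global angle coordinate whose associated flow is periodic and Poisson-commutes with $J$. The delicate points are: (i) checking smoothness of the constructed $H$ across the fixed surfaces and the $\Z_k$-spheres, which is where the genus-zero hypothesis and Lemma \ref{lemma: S^1 action normal form} enter; (ii) verifying effectiveness of the resulting $\T^2$-action; and (iii) showing the chain count really does bound the topological complexity of the orbit space. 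In practice it may be cleaner to invoke Karshon's own reduction: translate \ref{item:18} into a statement about $\Gamma$ (genus-zero fat vertices, and the graph being, after deleting free edges, a ``path-like'' configuration with at most two strands), and then appeal to Karshon's construction of the toric manifold from such a graph (\cite{karshon}, proof of Prop.\ 5.16 and 5.21), which is exactly what the cited result packages. The honest obstacle is therefore bookkeeping: matching the metric-dependent notion of ``non-trivial chain'' with the intrinsic graph-theoretic condition, and this is handled by the normal-form analysis near each vertex of $\Gamma$.
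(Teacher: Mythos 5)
First, a point of comparison: the paper does not prove Theorem \ref{prop: karshon prop 5.21} at all. It is imported verbatim from Karshon \cite{karshon} (Propositions 5.16 and 5.21), and the surrounding text states that Karshon's classification is ``recalled below without proofs''. So there is no in-paper argument to measure your sketch against; it can only be judged as an attempted independent proof, and as such it has a genuine gap.

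The gap is in \ref{item:17} $\Rightarrow$ \ref{item:18}. You fix an \emph{arbitrary} compatible metric and assert that ``the number of non-trivial chains is controlled by the maximal number of non-free orbits appearing in a single non-extremal level set.'' This is false for a general metric: an interior isolated fixed point lies on exactly one chain (one gradient sphere descends into it, one ascends out of it), and that chain is automatically non-trivial, but its spheres may be free everywhere away from the fixed point. Three interior fixed points at three \emph{distinct} $J$-levels, each joined to the extrema by free gradient spheres, give three non-trivial chains while every non-extremal level set contains only one non-free orbit — and such metrics can occur even on genuinely toric examples (a hexagonal Delzant polygon with vertices at distinct $J$-values), since only $\Z_k$-spheres and fixed points are metric-independent. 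This is precisely why \ref{item:18} is an existential statement over compatible metrics: one must \emph{choose} or modify the metric so that the interior fixed points and $\Z_k$-spheres organize into at most two strands, and the level-set count alone does not produce that choice. Your \ref{item:15} $\Rightarrow$ \ref{item:17} step is essentially sound, but for \ref{item:18} $\Rightarrow$ \ref{item:15} you ultimately defer to Karshon's own construction, which is a legitimate citation but means the one implication you identify as substantive is not actually proved; none of the delicate points (i)--(iii) you list is carried out. In short, the proposal neither matches the paper (which has no proof to match) nor closes the argument on its own.
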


\begin{defin}[{\bf Extendable Hamiltonian $S^1$-spaces}]\label{ext}
A Hamiltonian $S^1$-space $(M,\omega,J)$ is said to be \emph{extendable} if it satisfies any of the conditions of Theorem \ref{prop: karshon prop 5.21}.
\end{defin}

The following theorem of Karshon gives a sufficient condition for a Hamiltonian $S^1$-space to be extendable.

\begin{theorem}[Karshon \cite{karshon}, Th. 5.1]\label{ifp}
Let $(M,\omega,J)$ be a Hamiltonian $S^1$-space such that whose fixed points are isolated. Then $(M,\omega,J)$ comes from a K\"ahler toric variety by restricting the action of the 2-torus to a sub-circle.  
\end{theorem}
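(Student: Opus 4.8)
The plan is to reduce the statement to Theorem~\ref{prop: karshon prop 5.21}, using the hypothesis that all fixed points are isolated to verify condition \ref{item:17}. Since there are no fixed surfaces at all, the first half of \ref{item:17} (``each fixed surface has genus zero'') is vacuously true, so the entire task is to show that each non-extremal level set $J^{-1}(c)$ contains at most two non-free $S^1$-orbits. Once this is established, Theorem~\ref{prop: karshon prop 5.21} gives an extension to an effective Hamiltonian $\T^2$-action, i.e.\ a symplectic toric manifold $(M,\omega,(J,H))$; the final improvement to a \emph{K\"ahler} toric variety then follows from Delzant's construction, which realizes any Delzant polygon (here $\Delta = (J,H)(M)$) by a projective toric variety carrying a compatible K\"ahler structure, with the $S^1$-action recovered as a subcircle of the torus.

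The heart of the argument is thus the count of non-free orbits on a non-extremal level set, and here is where I expect the main work. The non-free orbits lying in $J^{-1}(c)$ are of two types: points of $M^{\Z_k}$ for various $k>1$, and the fixed points in $M^{S^1}\cap J^{-1}(c)$ themselves. By Lemma~\ref{lemma: Zk spheres}, every connected component of $M^{\Z_k}$ closes up to a $\Z_k$-sphere whose two poles are isolated fixed points; since $J$ is a perfect Morse--Bott function (indeed Morse, as the fixed points are isolated) with respect to a compatible metric, one can organize the non-free orbits as the ``exceptional'' points of the gradient $\C^\times$-action. The key structural fact I would invoke is that, for a non-extremal value $c$, the interval structure of $J$ together with the local normal form of Lemma~\ref{lemma: S^1 action normal form} forces the non-free orbits on $J^{-1}(c)$ to be connected by chains of gradient spheres running from $\Jmin$ to $\Jmax$; having more than two such chains through level $c$ would, via the correspondence in Remark~\ref{rmk:grad} and condition~\ref{item:18}, already contradict extendability — but we want the converse, so instead I would argue directly.

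The cleanest route: since the fixed points are isolated, a neighborhood of each fixed point $p$ looks like $\C^2$ with weights $(m_1,m_2)$ by Lemma~\ref{lemma: S^1 action normal form}, and $p$ has Morse index $0$, $2$, or $4$ according to the signs of $m_1,m_2$. A standard Morse-theoretic count (Karshon \cite[Section 2]{karshon}) shows that the number of non-trivial chains of gradient spheres equals the number of interior critical points of a fixed index pattern, and isolated fixed points with the ``fat vertex'' possibility excluded mean the graph $\Gamma$ is a genuine tree-like diagram whose interior vertices are index-$2$ fixed points. One then checks that the second Betti number / Euler characteristic constraints coming from $M$ being a closed $4$-manifold bound the number of such interior vertices on any level, yielding at most two non-free orbits per non-extremal level. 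Strictly, though, since Theorem~\ref{prop: karshon prop 5.21} is quoted as a black box and its condition~\ref{item:18} is phrased in terms of chains of gradient spheres, the slickest proof simply verifies \ref{item:18}: choose a compatible metric $g$; because every fixed point is isolated, every gradient sphere has isolated poles, and a minimal chain decomposition exists; the claim is that isolatedness of fixed points prevents the ``branching'' that produces a third non-trivial chain. The main obstacle is precisely pinning down this combinatorial/Morse-theoretic branching bound rigorously; modulo that, everything else is a citation. I would handle it by appealing to Karshon \cite[Section 5]{karshon}, where this implication is essentially the content of the proof of Theorem~5.1, and spell out only the reduction to the isolated-fixed-point case.

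\begin{remark}
One should note that the hypothesis that fixed points are isolated is genuinely needed: a Hamiltonian $S^1$-space with a fixed surface of positive genus cannot be extendable, since condition~\ref{item:17} would fail. Thus Theorem~\ref{ifp} is a strict specialization of the extendability criterion, and the role of the isolatedness hypothesis is exactly to kill the genus obstruction and simultaneously to rigidify the gradient-sphere combinatorics.
\end{remark}
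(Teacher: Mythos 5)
The paper offers no proof of this statement: it is imported verbatim as Karshon \cite[Th.~5.1]{karshon}, so there is no internal argument to compare against. Your reduction scheme is reasonable in outline --- verify condition \ref{item:17} of Theorem \ref{prop: karshon prop 5.21} and then invoke Delzant's construction to upgrade the resulting symplectic toric manifold to a K\"ahler toric variety --- but the proposal never actually establishes the one step that carries all the content, namely that isolatedness of the fixed points forces every non-extremal level set of $J$ to contain at most two non-free orbits. Your Morse-theoretic sketch does not close this gap: the local normal form of Lemma \ref{lemma: S^1 action normal form} and the evenness of the Morse indices say nothing about how many index-$2$ fixed points can share a level or how many $\Z_k$-spheres can cross it, and since $b_2(M)$ can be made arbitrarily large by equivariant blowups there is no Euler-characteristic bound of the kind you gesture at. The assertion that ``the number of non-trivial chains of gradient spheres equals the number of interior critical points of a fixed index pattern'' is not a standard fact and is not justified. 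You then concede the obstacle and propose to appeal to Karshon's Section 5, ``where this implication is essentially the content of the proof of Theorem 5.1'' --- but that is circular: you would be proving Karshon's Theorem 5.1 by citing the proof of Karshon's Theorem 5.1.

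The actual argument in Karshon's memoir is of a different nature: it proceeds by induction on the number of fixed points, showing that a space with isolated fixed points can be equivariantly blown down to a minimal model (a standard action on $\cp$, a Hirzebruch surface, or $S^2\times S^2$), each of which is K\"ahler toric, and that equivariant blowups preserve this property; the ``at most two non-free orbits per non-extremal level'' statement is a consequence of, not an input to, this classification. (Compare the blow-down argument in the proof of Proposition \ref{prop:nad} in this paper, which uses exactly that circle of ideas.) If you want an honest proof rather than a citation, the inductive blow-down structure is the missing idea. A minor further point: your closing remark overstates the role of the hypothesis --- extendable spaces may well have fixed surfaces (they appear as vertical edges of the Delzant polygon, cf.\ Remark \ref{rk:delham}), so isolatedness of the fixed points is sufficient but far from necessary for extendability.
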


\begin{example}\label{example:ne}
There are Hamiltonian $S^1$-spaces which are not extendable (cf.\ Remark \ref{rmk:ext}). For instance, endow $\C \mathbb{P}^1\times \C \mathbb{P}^1$ with the standard symplectic form and consider the $S^1$-action given by
$\lambda\cdot([z_0:z_1],[z_2:z_3])=([z_0:\lambda^mz_1],[z_2:z_3])$, for some fixed $m\in \Z\setminus\{0\}$.
Blow up three points lying on $\{[0:1]\}\times \C \mathbb{P}^1$ by the same amount. These three points are fixed points of the $S^1$-action on the resulting manifold and have the same moment map value. Thus this Hamiltonian $S^1$-space is not extendable by Theorem \ref{prop: karshon prop 5.21}. 
The associated (unlabeled) graph is as in Figure \ref{graph:na}.
\end{example}

\begin{figure}[h]\label{graph:na}
\begin{center}
\input{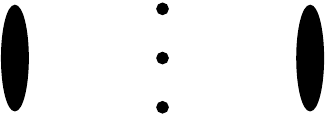_t}
\caption{}
\label{extension}
\end{center}
\end{figure}


\subsection{Semi-toric systems}\label{sec:compl-integr-hamilt}
The aim of this section is to introduce the category of semi-toric systems, to provide some examples, and to describe how to associate a family of polygons to such a system, following \vungoc\ \cite{vu ngoc}. 

\subsubsection{Strongly non-degenerate singularities}\label{sec:str}
Let $(M,\omega,\Phi=(J,H))$ be a completely integrable Hamiltonian system. A point $p \in M$ is \emph{singular} or \emph{critical} if $\Phi$ fails to be a submersion at $p$. In this case, the \emph{rank} of $p$ is defined to be $\mathrm{rk}\, D_{p} \Phi$. Working with arbitrary types of singular points is beyond the scope of this paper. To this end, all singular points are henceforth assumed to be \textbf{non-degenerate} in the sense of Williamson \cite{williamson}, i.e.\ a generalization of the Morse-Bott condition in the symplectic category (cf.\ Zung \cite{zung symplectic} for a precise definition). This notion is generic and naturally extends to singular orbits of the $\R^2$-action, i.e.\ if an orbit $\mathcal{O}$ contains a singular non-degenerate point, then all points in $\mathcal{O}$ are non-degenerate. Moreover, the singular points considered here are not of hyperbolic type, as these are of an intrinsically different nature to the other ones (cf.\ Symington \cite{symington}). These are henceforth referred to as \emph{strongly non-degenerate}. This is no standard notation, introduced here for convenience.
For the purposes of this paper, non-degeneracy amounts to controlling the local behavior of the action near compact singular orbits (cf.\ Eliasson \cite{eliasson thesis}, Miranda $\&$ Zung \cite{miranda-zung}). This can be made precise as follows and is henceforth referred to as the Eliasson-Miranda-Zung \textbf{local normal form}. The above assumptions imply that there are three types of singular orbits, two of rank 0 (i.e.\ fixed points) and one of rank 1 (i.e.\ a circle). \\

\noindent
\underline{Fixed points}: Let $(x,y,\xi,\eta)$ denote Darboux coordinates on $(\R^4,\omega_0)$.\\

\noindent
\textbf{Elliptic-elliptic point}: A point $p \in M$ is said to be of \emph{elliptic-elliptic type} if there exist open neighbourhoods $U \subset (M,\om)$ of $p$, $U_0 \subset (\R^4,\omega_0)$ of $\mathbf{0} \in \R^4$, a symplectomorphism $\Psi: (U,\om) \to (U_0,\om_0)$ such that $\Psi(p) = \mathbf{0}$, and a local diffeomorphism $\psi : \R^2 \to \R^2$ satisfying $\psi(\Phi(p)) = (0,0)$, which make the following diagram commute
  \begin{equation}
    \label{eq:1}
    \xymatrix{ (U,\om) \ar[d]_-{\Phi} \ar[r]^-{\Psi} & (U_0,\om_0) \ar[d]^-{\Phi_{\mathrm{ee}}} \\
      \R^2 \ar[r]_-{\psi} & \R^2,}
  \end{equation}
\noindent
where $\Phi_{\mathrm{ee}} = (q_1,q_2)$ and $q_1 = \frac{1}{2}(x^2+\xi^2)$, $q_2=\frac{1}{2}(y^2+\eta^2)$.\\

\noindent
{\bf Focus-focus point}: A point $p \in M$ is said to be of \emph{focus-focus type} if there exist $U, U_0, \Psi, \psi$ as above making the diagram in equation \eqref{eq:1} commute with respect to the map $\Phi_{\mathrm{ff}} =(q_1,q_2)$, where $q_1 = x\eta - y\xi$, $q_2=x\xi+y\eta$.\\

\noindent
\underline{Rank 1 orbits}: Let $(x,y,a,\theta)$ denote Darboux coordinates on $(\R^2 \times \cotan S^1,\omega_0)$. \\

\noindent
{\bf{Elliptic-regular orbits}}: An orbit $\mathcal{O}$ is said to be of \emph{elliptic-regular type} if there exist open neighbourhoods $U \subset (M,\om)$ of $\mathcal{O}$, $U_0 \subset (\R^2 \times \cotan S^1,\omega_0)$ of the circle $C =\{x=y=a=0\}$, a symplectomorphism $\Psi: (U,\om) \to (U_0,\om_0)$ such that $\Psi(\mathcal{O}) = C$, and a local diffeomorphism $\psi : \R^2 \to \R^2$ satisfying $\psi(\Phi(\mathcal{O})) = (0,0)$, which make the following diagram commute
  \begin{equation*}
    \xymatrix{ (U,\om) \ar[d]_-{\Phi} \ar[r]^-{\Psi} & (U_0,\om_0) \ar[d]^-{\Phi_{\mathrm{er}}} \\
      \R^2 \ar[r]_-{\psi} & \R^2,}
  \end{equation*}
\noindent
where $\Phi_{\mathrm{er}} = (q_1,q_2)$ and $q_1 = \frac{1}{2}(x^2+y^2)$, $q_2= a$. \\

\subsubsection{The category $\st$}\label{sec:st}
With the above definitions in hand, it is now possible to define the category of semi-toric systems.
\begin{defin}[\textbf{Semi-toric systems}, \vungoc\ \cite{vu ngoc}]\label{at&st}
The category $\mathcal{S}\mathcal{T}$ is defined by
\begin{itemize}
\item {\it Objects}: completely integrable Hamiltonian systems $(M,\omega,\Phi=(J,H))$ whose singular points are strongly non-degenerate and such that $(M,\omega, J)$ is a Hamiltonian $S^1$-space. These are henceforth called {\em semi-toric systems}.
\item {\it Morphisms}: pairs $(\Psi, \psi)$, where $\Psi : (M_1,\omega_1) \to (M_2,\omega_2)$ is a symplectomorphism and $\psi : \Phi_1(M_1) \subset \R^2 \to \Phi_2(M_2) \subset \R^2 $ is a locally defined diffeomorphism of the form $\psi(x,y)=(\psi^{(1)}, \psi^{(2)})(x,y) = (x, \psi^{(2)}(x,y))$ making the following diagram commute
$$ \xymatrix{(M_1,\omega_1) \ar[r]^-{\Psi} \ar[d]_-{\Phi_1} & (M_2,\omega_2) \ar[d]^-{\Phi_2} \\
\R^2 \ar[r]_-{\psi} & \R^2.}$$
\end{itemize}
These are henceforth denoted by $(\Psi, \psi) : (M_1,\omega_1,\Phi_1) \to (M_2,\omega_2,\Phi_2)$ and are referred to as isomorphisms of semi-toric systems.
\end{defin}

\begin{remark}\label{rmk:na}
In other works the total space $M$ of a semi-toric system is not necessarily compact, but $J$ is asked to be proper (cf.\ Pelayo $\&$ \vungoc\ \cite{pelayo-vu ngoc inventiones, pelayo_vu_ngoc_con}). 
\end{remark}

Intuitively, semi-toric systems lie at the intersection of completely integrable Hamiltonian systems and Hamiltonian $S^1$-spaces, as formalized by the following remark.
\begin{remark}\label{rmk:fun}
Definitions \ref{defn:ham_circle_space} and \ref{at&st} imply that there is a functor $\mathcal{F} : \st \to \Hs$ defined on objects and morphisms by
\begin{equation*}
\begin{split}
(M,\omega, \Phi =(J,H)) &\mapsto (M,\omega,J) \\
(\Psi,\psi) &\mapsto \Psi.
\end{split}
\end{equation*}
This is a well-defined functor because $\psi$ may only change the second component.
Given $\semitoric$, $\mathcal{F}\semitoric$ is said to be the \textit{underlying} Hamiltonian $S^1$-space.
\end{remark}

\begin{example}\label{exm:stm}
Symplectic toric manifolds (cf.\ Definition \ref{defin:stm}) are, in particular, semi-toric. The only singular orbits of toric systems are either elliptic-elliptic points or elliptic-regular orbits.
\end{example}

\begin{example}\label{exm:sz}
The first examples of honest (i.e.\ with focus-focus points) semi-toric systems appeared in the study of coupled angular momenta carried out in Sadovski{\'{\i}} $\&$ Z{\^h}ilinski{\'{\i}} \cite{sad_zhi}. More generally, the methods of Pelayo $\&$ \vungoc\ \cite{pelayo_vu_ngoc_con} allow to construct semi-toric systems by specifying some initial data.
\end{example}

\begin{remark}\label{rmk:sttoric}
  Semi-toric systems share a very important property with symplectic toric manifolds, namely connectedness of the fibers of the moment map. In the toric category, this fact is used to prove the Atiyah-Guillemin-Sternberg convexity theorem (cf.\ Atiyah \cite{atiyah}, Guillemin $\&$ Sternberg \cite{guillemin-sternberg}), while in the semi-toric case, this follows from \vungoc\ \cite[Theorem 3.4]{vu ngoc}.
\end{remark}

The simplest invariant of the isomorphism class of a semi-toric system $\semitoric$ is the \textbf{number of focus-focus critical points} $m_f \in \mathbb{N} \cup \{0\}$ (cf.\ Pelayo $\&$ \vungoc\ \cite[Lemma 3.2]{pelayo-vu ngoc inventiones}); when compared to symplectic toric manifolds, this is a new invariant.

\subsection*{Semi-toric polygons} In analogy with the case of symplectic toric manifolds, it is possible to associate a family of simple, rational, convex polygons to (the isomorphism class of) a semi-toric system (cf.\ \vungoc\ \cite{vu ngoc}). However, there are two important differences: first, not all vertices need to be smooth, and, second, this family consists of more than one element. These polygons, called \textbf{semi-toric}, play an important role in the proof of the main result of this paper (cf.\ Theorem \ref{thm: main}); as such, their construction is recalled below in some detail (cf.\ \vungoc\ \cite{vu ngoc} for proofs). \\

Throughout this section, fix a semi-toric system $(M,\omega,\Phi)$ with $m_f$ focus-focus critical points. The image $B:=\Phi(M)$ is called the \emph{curved polygon with marked interior points} (often abbreviated to curved polygon) associated to $\semitoric$, where the marked interior points are critical values of $\Phi$ whose fiber contains focus-focus points (see Figure \ref{curvedpolygon}). These are called \emph{focus-focus values} and are denoted by $c_1,c_2,\ldots,c_{m_f}$.

\begin{remark}\label{rmk:mult}
  Note that there may exist $i \neq j$ such that $c_i = c_j$. Any interior marked point $c_i$ is displayed in figures with its {\em multiplicity}, i.e.\ the integer $j(c_i) : = \abs{\{j \in \{1,\ldots,m_f\} \mid c_j = c_i\}}$, which is equal to the number of focus-focus critical points in $\Phi^{-1}(c_i)$, see Figure \ref{curvedpolygon}. 
\end{remark}

Since $B \subset \R^2$, it makes sense to consider the boundary $\partial B := B \setminus \mathring{B}$ (note that $B \subset \R^2$ is closed). A point $s \in\partial B$ is either an \emph{elliptic-elliptic value} (if $\Phi^{-1}(s)$ is an elliptic-elliptic point), or an \emph{elliptic-regular value} (if $\Phi^{-1}(s)$ is an elliptic-regular orbit). The former occur as vertices $B$ as shown in Figure \ref{curvedpolygon}. The segments in $\partial B$ joining vertices are called \textit{curved edges} and consist of elliptic-regular values (except for the vertices). Points in $B_{\mathrm{reg}}:=\mathring{B} \setminus \{c_1,c_2,\ldots,c_{m_f}\}$ are called \emph{regular values} and their fibers are tori. This description follows from connectedness of the fibers and the Eliasson-Miranda-Zung local normal form of Section \ref{sec:str} (cf.\ \vungoc\ \cite{vu ngoc}). 

\begin{remark}\label{rmk:boh}
The curved polygon $B$ has further properties which are proved in \vungoc\ \cite[Theorem 3.4]{vu ngoc}.
\end{remark}

\begin{figure}[h]
\begin{center}
\input{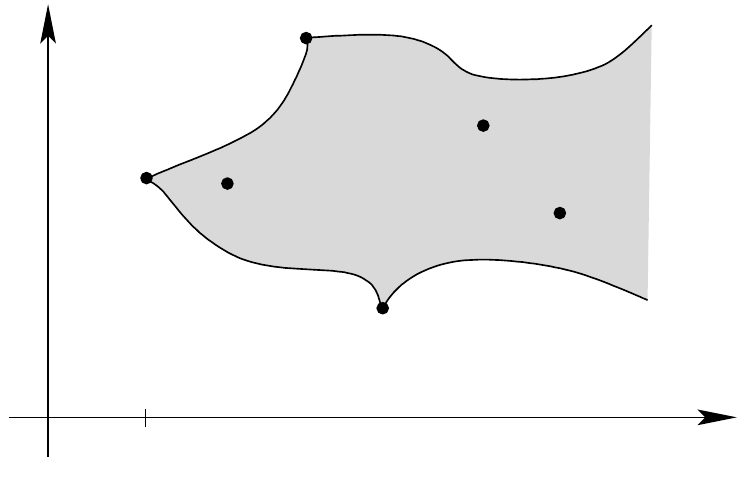_t}
\caption{Part of the curved polygon.}
\label{curvedpolygon}
\end{center}
\end{figure}

The subset $B \setminus \{c_1,c_2,\ldots, c_{m_f}\}$ inherits the structure of a manifold with corners (in the sense of Karshon $\&$ Lerman \cite[Appendix A]{karshon_lerman}) endowed with an integral affine structure, defined below for two dimensional manifolds.

\begin{defin}[{\bf Integral affine structures}]\label{defin:inte}
An \emph{integral affine structure} on $N$ is a smooth atlas $\mathcal{A}:=\{(U_i,\phi_i)\}$, with $\phi_i :U_i \subset N \to \R^2$ such that the change of coordinates $\phi_j \circ \phi_i^{-1}:\phi_i(U_i \cap U_j) \subset \R^2 \to \phi_j(U_i \cap U_j) \subset \R^2$ is an element of $\mathrm{AGL}(2;\Z) := \mathrm{GL}(2;\Z) \ltimes \R^2$.
\end{defin}

The integral affine structure $\mathcal{A}$ on $B$ is defined by taking the \emph{action} coordinates given by the Liouville-Arnol'd theorem near regular values (cf.\ Duistermaat \cite{dui}), and by the Eliasson-Miranda-Zung local normal form of Section  \ref{sec:str} near the boundary.

\begin{remark}\label{rmk:bound}
In integral affine coordinates the boundary $\partial B \subset B$ is locally defined by hyperplanes whose normals have integer coefficients. 
\end{remark}

It is important to note that the integral affine structure $\mathcal{A}$ on $B\setminus \{c_1,c_2,\ldots,c_{m_f}\}$ is not the one coming from the inclusion $B \subset \R^2$ unless $m_f =0$ (cf.\ Zung \cite{zung focus focus}). In order to bypass this issue, \vungoc\ \cite{vu ngoc} introduces vertical cuts on $B$ in such a way that the resulting subset is a simply connected integral affine manifold with corners. These can be defined as follows. Let $p_1,\ldots, p_{m_f} \in M$ be the focus-focus singular points of $\semitoric$ and order the corresponding focus-focus values $c_1=(x_1, y_1), \ldots ,c_{m_f}=(x_{m_f}, y_{m_f}) \in \R^2$ so that $x_1 \leq \dots \leq x_{m_f}$. Note that it may be possible that $c_i = c_j$ for some $i \neq j$ (cf.\ Remark \ref{rmk:mult}). For $\ep_i \in \{+1,-1\}$, set
$$l_i^{\ep_i}:= \{(x,y)\in \R^2\mid x=x_i,\; \ep_iy \geq \ep_iy_i\} \cap B.$$ 
\noindent
For $\ep_i =1$ (respectively $-1$) this is the closed vertical segment between $c_i$ and the upper (respectively lower) boundary of $B$. For $\boldsymbol{\ep} = (\ep_1,\ldots,\ep_{m_f}) \in \{+1,-1\}^{m_f}$, set 
$$l^{\boldsymbol{\ep}}=\bigcup\limits_{i=1}^{m_f} l_i^{\ep_i};$$
\noindent
denote  the \emph{open} vertical segments by
$$\mathring{l}^{\ep_i}_i := l^{\ep_i}_i \setminus (\{c_i\} \cup (l^{\ep_i}_i \cap \partial B)).$$
\noindent
Each point $s \in \mathring{l}^{\boldsymbol{\ep}}$ is labeled by the integer
$$ j(s) :=\sum\limits_{i\,\, with \,\, s \,\in\, l^{\ep_i}_i} \ep_i j(c_i), $$ 
\noindent
where $j(c_i)$ is the multiplicity of the focus-focus critical value $c_i$ (cf.\ Remark \ref{rmk:mult}). A choice of cuts $\boldsymbol{\ep}$ determines a labeled convex polygon $\mathcal{P}_{\boldsymbol{\ep}}$ (these are henceforth called semi-toric) associated to $(M,\omega,\Phi)$ in the following fashion.

\begin{theorem}[\vungoc\ \cite{vu ngoc}, Th.\ 3.8]
\label{th: vu ngoc th 3.8}
For any $\boldsymbol{\ep} \in \{ +1, -1\} ^{m_f}$ there exists a homeomorphism $f: B \to f(B)=:\mathcal{P}_{\boldsymbol{\ep}} \subset \R^2$ such that
\begin{enumerate}[label=(\arabic*), ref=(\arabic*)]
  \item \label{item:7}
$f$ restricted to $B \setminus l^{\boldsymbol{\ep}}$ is a diffeomorphism onto its image.
  \item \label{item:8}
$f$ restricted to $B \setminus l^{\boldsymbol{\ep}}$ is integral affine for the standard integral affine structure on $\R^2$.
  \item \label{item:9}
$f$ is of the form $f(x, y)= (x, f^{(2)}(x, y))$.
  \item \label{item:10}
For all $1 \le i \leq m_f$ and all points $s \in \mathring{l}_i^{\ep_i}$, there is an open ball $D$ around $s$ such that the restriction of $f$ to $B\backslash l^{\boldsymbol{\ep}}$ has a smooth extension to $\{(x, y) \in D \mid x \leq x_i\}$ and $\{(x, y) \in D \mid x \geq x_i\}$ and 
\begin{equation*}
  \lim_{\stackrel{(x, y) \to s}{x <x_i}}d f(x, y)= 
  \left(
  \begin{smallmatrix}
   1 & 0 \\ j(s) & 1
  \end{smallmatrix}
  \right)
  \lim_{\stackrel{(x, y) \to s}{x >x_i}}d f(x, y).
\end{equation*}
  \item \label{item:11}
$\mathcal{P}_{\boldsymbol{\ep}}$ is a simple, rational, convex polygon.
\end{enumerate}
\end{theorem}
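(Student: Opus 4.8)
\textbf{Proof proposal for Theorem \ref{th: vu ngoc th 3.8}.}

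The plan is to construct the homeomorphism $f$ explicitly from the integral affine structure $\mathcal{A}$ on $B \setminus \{c_1,\ldots,c_{m_f}\}$ by using the vertical cuts $l^{\boldsymbol{\ep}}$ to pass to a simply connected region, and then developing that region into $\R^2$. First I would observe that $B \setminus l^{\boldsymbol{\ep}}$ is a simply connected manifold with corners carrying the restricted integral affine structure: simple connectedness holds because each cut $l_i^{\ep_i}$ runs from the interior marked point $c_i$ to the boundary $\partial B$, and removing all of them kills the only source of nontrivial monodromy (the focus-focus points), as explained in Zung \cite{zung focus focus} and \vungoc\ \cite{vu ngoc}. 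On a simply connected integral affine surface the developing map exists and is unique up to post-composition with an element of $\mathrm{AGL}(2;\Z)$; this yields a diffeomorphism onto its image which is integral affine for the standard structure on $\R^2$, giving items \ref{item:7} and \ref{item:8}. The normalization needed for item \ref{item:9} — that $f$ has the form $f(x,y) = (x, f^{(2)}(x,y))$ — is arranged by choosing the $\mathrm{AGL}(2;\Z)$-representative of the developing map so that its first component agrees with $J$; this is consistent because $J = \Phi^{(1)}$ is already a global action coordinate (its flow is $2\pi$-periodic), so $\diff J$ is an integral covector in every affine chart and can be taken as the first coordinate.

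Next I would analyze the behaviour of $f$ across a cut $\mathring{l}_i^{\ep_i}$ to obtain item \ref{item:10}. Near a point $s \in \mathring{l}_i^{\ep_i}$, the integral affine structure $\mathcal{A}$ extends smoothly across the cut (the cut is an artefact of the choice, not a singularity of $\mathcal{A}$), so $f$ restricted to each side $\{x \le x_i\}$ and $\{x \ge x_i\}$ of $D \setminus l_i^{\ep_i}$ extends smoothly to the closed half-disc. The two extensions differ by the affine monodromy around the focus-focus value(s) on the segment through $s$; by the Eliasson-Miranda-Zung local normal form at a focus-focus point (Section \ref{sec:str}) the monodromy of a single focus-focus fibre is conjugate to $\left(\begin{smallmatrix} 1 & 0 \\ 1 & 1\end{smallmatrix}\right)$ in the basis adapted to $J$, and stacking $j(s)$ of them (with the signs $\ep_i$ recording whether the cut goes up or down) gives exactly $\left(\begin{smallmatrix} 1 & 0 \\ j(s) & 1\end{smallmatrix}\right)$. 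Because $f$ preserves the first coordinate, the gluing matrix must fix the $x$-axis, which forces precisely this lower-triangular unipotent form and pins down $j(s)$ as the off-diagonal entry. This is the computation that \vungoc\ carries out in detail; I would quote it rather than redo it.

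Finally, for item \ref{item:11} I would show $\mathcal{P}_{\boldsymbol{\ep}} = f(B)$ is a simple, rational, convex polygon. Rationality and simplicity follow from Remarks \ref{rmk:bound} and \ref{rmk:boh}: the boundary $\partial B$ is locally cut out by hyperplanes with integer normals in affine coordinates, and $f$ is integral affine on $B \setminus l^{\boldsymbol{\ep}}$, so the image of each curved edge is a genuine line segment of rational slope, and the elliptic-elliptic vertices map to honest corners where exactly two such edges meet. Convexity is the substantive point: one shows that $f$ "straightens" the curved polygon by verifying that at each vertex the image angle is less than $\pi$ and that along each edge the boundary turns consistently; here one uses that $J$ is a proper Morse-Bott function on $M$ together with the Atiyah-Guillemin-Sternberg-type convexity that the fibres of $\Phi$ are connected (Remark \ref{rmk:sttoric}, \vungoc\ \cite[Theorem 3.4]{vu ngoc}) to control the global shape. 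I expect the main obstacle to be exactly this global convexity statement — making rigorous that the local integral-affine gluing data assemble into a \emph{convex} polygon rather than merely a closed polygonal curve — which is where the connectedness of the fibres and the structure theorem for $B$ are essential; the local existence and the transition-matrix computation are comparatively routine given the Eliasson-Miranda-Zung normal form.
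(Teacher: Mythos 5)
The paper itself does not prove Theorem \ref{th: vu ngoc th 3.8}: it is imported with attribution from \vungoc\ \cite{vu ngoc} (``cf.\ \vungoc\ \cite{vu ngoc} for proofs''), so there is no in-paper argument to measure yours against. Judged against the original, your sketch follows the correct broad strategy: cut along $l^{\boldsymbol{\ep}}$ to obtain a simply connected integral affine manifold with corners, map it into $\R^2$ normalizing the first coordinate to be $J$, compute the jump across each cut from the focus-focus monodromy, and reduce item \ref{item:11} to a local-to-global convexity argument resting on connectedness of the fibres and the structure theorem for $B$. One presentational difference: \vungoc\ constructs $f^{(2)}$ directly as an action integral (the symplectic area swept by a continuously varying cycle), so items \ref{item:9} and \ref{item:10} come out as computations in a fixed basis rather than as normalizations of an abstract developing map; the two formulations are equivalent, but the explicit one is what makes the transition matrix appear exactly as $\left(\begin{smallmatrix} 1 & 0 \\ j(s) & 1\end{smallmatrix}\right)$ rather than merely up to conjugation.

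There is one genuine gap. You assert that the developing map of the simply connected surface $B \setminus l^{\boldsymbol{\ep}}$ ``yields a diffeomorphism onto its image,'' but a developing map of an affine structure is in general only an immersion; injectivity is not automatic and is precisely the content of items \ref{item:7} and \ref{item:11} taken together. In this setting injectivity has to be extracted \emph{after} the normalization of item \ref{item:9}: since $f(x,y)=(x,f^{(2)}(x,y))$ and each vertical slice $B \cap \{x=\mathrm{const}\}$ is an interval (connectedness of the fibres of $J$), the map $y \mapsto f^{(2)}(x,y)$ is a local diffeomorphism of an interval, hence strictly monotone, hence injective. Your sketch claims the embedding property before establishing the normalization that justifies it, and a similar ordering issue affects the convexity step, which you correctly identify as the heart of the theorem but leave as a plan. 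Repairing this requires no new ideas, only rearranging the argument so that item \ref{item:9} and the interval structure of the vertical slices are in place before injectivity and convexity are claimed.
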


\begin{remark}\label{rmk:label}
  The labeling of $\polygon$ consists of marking the image $f(c_i)$ of each focus-focus critical value $c_i \in B$ with its multiplicity $j(c_i)$ (cf.\ Remark \ref{rmk:mult}), as illustrated in Figure \ref{straightenedpolygon}.
\end{remark}

\begin{remark}\label{rmk:tau}
In fact, any other semi-toric polygon associated to the same choice of cuts $\boldsymbol{\ep}$ differs from $\mathcal{P}_{\boldsymbol{\ep}}$ by composition with an element of 
$$ \msT := \{\left( \left(
  \begin{smallmatrix}
  1 & 0 \\ j & 1
  \end{smallmatrix}
  \right),
  \left(
  \begin{smallmatrix}
   0 \\ t
  \end{smallmatrix}
  \right)
\right) \mid j \in \Z\,,\, t \in \R\} \subset \mathrm{AGL}(2;\Z),$$
\noindent
i.e.\ the subgroup of integral affine transformations preserving vertical lines. This is because, once a choice of cuts $\boldsymbol{\ep} \in \{+1,-1\}^{m_f}$ is fixed, the homeomorphism $f$ of Theorem \ref{th: vu ngoc th 3.8} is completely determined by a specific choice of action-angle variables near a regular level set of $\Phi$ (cf.\ Pelayo $\&$ \vungoc\ \cite[Section 2.2]{pelayo_vu_ngoc_con} and \vungoc\ \cite[Step 2, Th.\ 3.8]{vu ngoc}). The adjective `specific' refers to the fact that the first component $J$ of $\Phi$ is chosen as an action coordinate (equivalently, the first standard coordinate $x$ on $\R^2$ is chosen as an integral affine coordinate on $B$), since it generates an effective Hamiltonian $S^1$-action. Moreover, upon choosing an orientation on $\R^2$, $f$ can always be chosen so that the top (respectively bottom) boundary of $B$ is sent to the top (respectively bottom) boundary of $\polygon$ by changing the sign of its second component. Henceforth, whenever referring to the semi-toric polygon $\mathcal{P}_{\boldsymbol{\ep}}$ associated to $\semitoric$ and $\boldsymbol{\ep} \in \{+1,-1\}^{m_f}$, it is understood that a choice of action variables (equivalently local integral affine coordinates on $B$) as above is fixed and that $f$ is chosen to be orientation preserving (upon a choice of orientation on $\R^2$), unless otherwise stated.
\end{remark}

\begin{remark}\label{rmk:choice}
  Let $\boldsymbol{\ep}, \boldsymbol{\ep}'$ be two choices of cuts for $\semitoric$ and denote the corresponding semi-toric polygons by $\polygon$, $\mathcal{P}_{\boldsymbol{\ep}'}$. Then there exists a continuous piecewise integral affine transformation $\tau$ such that $\mathcal{P}_{\boldsymbol{\ep}'} = \tau(\polygon)$ with the property that $\tau$ preserves vertical lines, i.e.\ on each region on which it is defined by an integral affine transformation it is given by a restriction of an element in $\msT$. This can be used to give a geometric interpretation of the action of $\{+1,-1\}^{m_f}$ on the space of semi-toric polygons associated to $\semitoric$ (cf.\ \vungoc\ \cite[Prop.\ 4.1]{vu ngoc}).
\end{remark}

\begin{figure}[h]
\begin{center}
\input{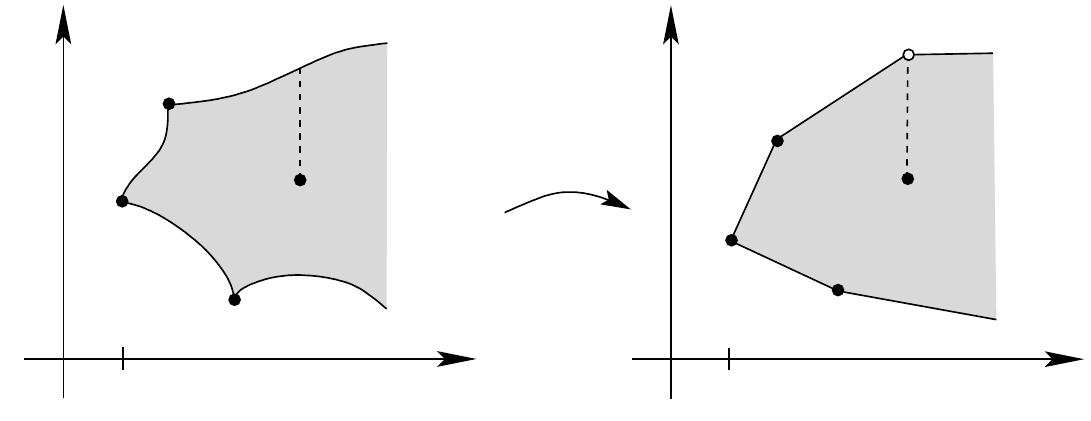_t}
\caption{From $B = \Phi(M)$ to $\polygon$}
\label{straightenedpolygon}
\end{center}
\end{figure}
Fix a choice of cuts $\boldsymbol{\ep} \in \{+1,-1\}^{m_f}$ and let $\mathcal{P}_{\boldsymbol{\ep}}$ be the associated semi-toric polygon. The vertices of $\polygon$ fall into three categories, as described below.

\begin{defin}[{\bf Types of vertices of $\polygon$}, Def. 4.1 in Pelayo $\&$ \vungoc\ \cite{pelayo_vu_ngoc_con}]\label{defn:vertices}

A vertex $v$ of $\mathcal{P}_{\boldsymbol{\ep}}$ is said to be
\begin{itemize}
\item \emph{Delzant} if it is the image of a vertex of $B$ which has no cuts `into' it;
\item \emph{hidden Delzant} if it is the image of a vertex of $B$ which has at least one cut `into' it;
\item \emph{fake}: if it is the image of a point in $\partial B$ which is not a vertex.
\end{itemize}
For a hidden Delzant or fake vertex $v$, its \emph{degree} $n_v \geq 1$ is the number of cuts that go into $f^{-1}(v)$, while its \emph{sign} $\ep_v \in \{+1,-1\}$ is the sign of any cut going into $f^{-1}(v)$. 
\end{defin}

\begin{remark}\label{rmk:nv}
Henceforth, Delzant, hidden Delzant and fake vertices are displayed in figures by $\bullet$, $\odot$ and $\circ$ respectively.
\end{remark}

\begin{remark}\label{rmk:pv}
  Note that the preimage $(f \circ \Phi)^{-1}(v)$ is an elliptic-elliptic point if (and only if) $v$ is Delzant or hidden Delzant, while it is a circle (consisting of elliptic-regular points) otherwise.
\end{remark}

Let $J_{\mathrm{min}}$ (respectively $J_{\mathrm{max}}$) denote the minimum (respectively maximum) value taken by $J$. By construction of $f$, vertices lying on $\polygon \cap \{(x,y) \mid x = J_{\mathrm{min}}\}$ (respectively $J_{\mathrm{max}}$) are Delzant (cf.\ \vungoc\ \cite[proof of Th.\ 3.8]{vu ngoc}). Fix a vertex $v$ of $\polygon$ strictly between the vertical lines between $J_{\mathrm{min}}$ and $J_{\mathrm{max}}$. By Theorem \ref{th: vu ngoc th 3.8}, the edges incident to $v$ have integral tangent vectors; denote their primitives with positive first component by $\mathbf{u},\mathbf{w} \in \Z^2$. Throughout, $\mathbf{u}$ (respectively $\mathbf{w}$) denotes the primitive tangent to the edge on the `left' (respectively `right') of $v$ (the orientation is chosen so that $J$ does not decrease going from left to right). As mentioned above, $\Z \langle \mathbf{u},\mathbf{w} \rangle$ does not need to be the standard $\Z^2 \subset \R^2$, i.e.\ the vertex does not need to be smooth. However, using the results of \vungoc\ \cite{vu ngoc}, the lemma below proves some conditions on $\mathbf{u}, \mathbf{w}$, which generalise Pelayo $\&$ \vungoc\ \cite[Def.\ 4.1 $\&$ Lemma 4.4]{pelayo_vu_ngoc_con}.

\begin{lemma}\label{lemma:df}
  If the vertex $v$ is
  \begin{itemize}
  \item Delzant, then $ \Z \langle \mathbf{u},\mathbf{w} \rangle = \Z^2$,
  \item hidden Delzant, then $ \Z \langle \mathbf{u},A_v\mathbf{w} \rangle = \Z^2$,
  \item fake, then $\Z \langle \mathbf{u},A_v\mathbf{w} \rangle = \Z \langle \mathbf{u} \rangle$,
  \end{itemize}
  where 
  $$ A_v :=
  \begin{pmatrix}
    1 & 0 \\
    \ep_v n_v & 1
  \end{pmatrix}, $$
  \noindent
  $n_v,\ep_v$ being the degree and sign of $v$ respectively. 
\end{lemma}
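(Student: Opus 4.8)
The three cases are really the same computation viewed through the local behavior of the straightening map $f$ near the vertex, so the plan is to reduce everything to Theorem~\ref{th: vu ngoc th 3.8}\ref{item:10} together with the fact (Remark~\ref{rmk:bound}) that in the integral affine structure $\mathcal{A}$ on $B$ the boundary is locally cut out by rational hyperplanes, and that at an honest vertex of $B$ (an elliptic-elliptic value) the two incident edges, read in $\mathcal{A}$, span $\Z^2$ -- this last statement being the Delzant/smoothness condition for the toric-type corner, which follows from the Eliasson-Miranda-Zung elliptic-elliptic normal form (the local model $(q_1,q_2)=(\tfrac12(x^2+\xi^2),\tfrac12(y^2+\eta^2))$ is exactly the standard toric corner). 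First I would set up notation: let $w := f^{-1}(v) \in \partial B$, let $\mathbf{u}_0,\mathbf{w}_0 \in \Z^2$ be the primitive tangent vectors (left and right, positive first component) to the curved edges of $B$ at $w$ read in the integral affine structure $\mathcal{A}$, and recall that away from the cuts $f$ is integral affine from $\mathcal{A}$ to the standard structure, so $f$ pushes $\mathcal{A}$-integral tangent vectors to $\Z^2$ tangent vectors on the relevant side of the cut.

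\textbf{The Delzant case.} Here, by definition, no cut goes into $w$, so a whole neighborhood of $w$ in $B\setminus l^{\boldsymbol{\ep}}$ maps by $f$ via a single element of $\mathrm{AGL}(2;\Z)$; hence $\mathbf{u} = g\mathbf{u}_0$ and $\mathbf{w} = g\mathbf{w}_0$ for the linear part $g \in \mathrm{GL}(2;\Z)$. Since $w$ is an elliptic-elliptic value of $B$, the elliptic-elliptic normal form gives $\Z\langle \mathbf{u}_0, \mathbf{w}_0\rangle = \Z^2$, and applying $g \in \mathrm{GL}(2;\Z)$ preserves this, so $\Z\langle \mathbf{u},\mathbf{w}\rangle = \Z^2$. (I should note that $v$ is assumed to lie strictly between the $J_{\min}$ and $J_{\max}$ lines precisely so that both edges have positive first component and the "left/right" convention makes sense; the extremal vertices are handled separately in the surrounding text.)

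\textbf{The hidden Delzant and fake cases.} Now $w$ has $n_v \geq 1$ cuts of common sign $\ep_v$ going into it, and these all lie along the vertical line $x = x_v$ through $w$. By Theorem~\ref{th: vu ngoc th 3.8}\ref{item:10}, applied on the segment of the cut just above (or below, according to $\ep_v$) $w$, the differential of $f$ from the right is obtained from the differential from the left by left multiplication by $\left(\begin{smallmatrix}1&0\\ j(s)&1\end{smallmatrix}\right)$ where the jump label accumulated across all $n_v$ cuts into $w$ is exactly $\ep_v n_v$ -- this is where the definition of the degree and sign of $v$ is used, together with the additivity of $j(s)$ over cuts passing through $s$. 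Consequently, if I take $f$ to be integral affine with linear part $g_L$ on the left side of $w$ and $g_R$ on the right side, then near $w$ one has $g_L = A_v g_R$ with $A_v = \left(\begin{smallmatrix}1&0\\ \ep_v n_v&1\end{smallmatrix}\right)$, and $\mathbf{u} = g_L \mathbf{u}_0$, $\mathbf{w} = g_R \mathbf{w}_0$. Therefore $\Z\langle \mathbf{u}, A_v\mathbf{w}\rangle = \Z\langle g_L \mathbf{u}_0, A_v g_R \mathbf{w}_0\rangle = g_L\cdot \Z\langle \mathbf{u}_0, \mathbf{w}_0\rangle$, since $A_v g_R = g_L$. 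In the hidden Delzant case $w$ is again an elliptic-elliptic vertex of $B$ so $\Z\langle\mathbf{u}_0,\mathbf{w}_0\rangle = \Z^2$ and the right-hand side is $g_L \Z^2 = \Z^2$; in the fake case $w$ is an elliptic-regular value sitting on a smooth curved edge, so the two one-sided tangents agree as rays and $\Z\langle\mathbf{u}_0,\mathbf{w}_0\rangle = \Z\langle\mathbf{u}_0\rangle$, whence the right-hand side is $g_L\Z\langle\mathbf{u}_0\rangle = \Z\langle g_L\mathbf{u}_0\rangle = \Z\langle\mathbf{u}\rangle$.

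\textbf{Main obstacle.} The routine part is the matrix bookkeeping; the point requiring care is the claim that the accumulated jump across all cuts entering $w$ equals precisely $\ep_v n_v$ and that one may legitimately speak of a single "left" and "right" integral affine linear part near $w$ even though several cut-segments (possibly with coincident $x$-coordinates and differing multiplicities $j(c_i)$, cf.\ Remark~\ref{rmk:mult}) pile up there. This needs the observation that all cuts into $w$ share the same vertical line $x = x_v$ and the same sign $\ep_v$, so locally on one side of that line $f$ is genuinely integral affine (no cut crosses that side), and the total label is $\sum \ep_v j(c_i) = \ep_v \sum j(c_i) = \ep_v n_v$ by the definition $n_v = \sum_{i: w \in l_i^{\ep_i}} j(c_i)$ combined with the formula for $j(s)$ in Theorem~\ref{th: vu ngoc th 3.8} and Remark~\ref{rmk:label}. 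I would spell this out carefully and otherwise cite \vungoc\ \cite[proof of Th.\ 3.8]{vu ngoc} for the elliptic-elliptic corner being smooth in $\mathcal{A}$.
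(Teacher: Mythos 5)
Your proof is correct and takes essentially the same route as the paper: the Delzant case is handled via the locally toric structure at an elliptic-elliptic corner, and the hidden Delzant/fake cases via the fact that the jump of $f$ across the cuts entering $f^{-1}(v)$ is exactly $A_v$. The only real difference is packaging: you extract that jump directly from the one-sided differentials in Theorem \ref{th: vu ngoc th 3.8}\ref{item:10} (your relation $g_L = A_v g_R$), whereas the paper encodes the same data as the cut-flipping transformation $\tau_v = f'\circ f^{-1}$ of \vungoc\ \cite[Prop.\ 4.1]{vu ngoc}, which is the identity to the left of the vertical line through $v$ and $A_v$ to the right, thereby reducing the hidden Delzant (resp.\ fake) case to the Delzant (resp.\ non-vertex) case.
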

\begin{proof}
  If $v$ is Delzant, then $f \circ \Phi$ defines a Hamiltonian $\T^2$-action near $(f \circ \Phi)^{-1}(v)$ which commutes with the Hamiltonian action defined by $\Phi$. Thus, in this case, the result follows because the action is locally toric. If $v$ is either hidden Delzant or fake, consider a semi-toric polygon $\mathcal{P}_{\boldsymbol{\ep}'}$ associated to the choice of cuts $\boldsymbol{\ep'}$, which agrees with $\boldsymbol{\ep}$ except that it changes the sign of the cuts going into $f^{-1}(v)$. Let $f': B \to \mathcal{P}_{\boldsymbol{\ep}'}$ denote the homeomorphism associated to $\boldsymbol{\ep}'$ as in Theorem \ref{th: vu ngoc th 3.8}. Then, by construction of $f'$, we have $f'(f^{-1}(v)) = v$. If $v$ was hidden Delzant for $\polygon$, then $v$ is Delzant for $\mathcal{P}_{\boldsymbol{\ep}'}$, while if it was fake for $\polygon$, it is not a vertex for $\mathcal{P}_{\boldsymbol{\ep}'}$. The result in both cases follows from the fact that $\mathcal{P}_{\boldsymbol{\ep}'} = \tau_v(\polygon)$, where $\tau_v$ is a piecewise integral affine transformation, which is the identity on the half-space to the left of the vertical line containing $v$ and $A_v$ on the half-space to the right (cf.\ Remark \ref{rmk:choice} and \vungoc\ \cite[Prop.\ 4.1]{vu ngoc}).
\end{proof}

Any semi-toric polygon $\polygon$ associated to $(M,\omega,\Phi=(J,H))$ contains information about an important invariant of the underlying Hamiltonian $S^1$-space $\hamiltonian$.

\begin{defin}[{\bf Duistermaat-Heckman measure}, \cite{dui_heck}]\label{defin:dhm}
  The push forward of the Liouville measure $\om^2$ under $J$ is called the {\em Duistermaat-Heckman measure} $\mu_J$ of the Hamiltonian $S^1$-action defined by $J$. Its density function $\rho_J$ is called the associated {\em Duistermaat-Heckman function}.
\end{defin}

In analogy with symplectic toric manifolds, $\polygon$ determines the derivative of the Duistermaat-Heckman function of $\hamiltonian$ as stated in the theorem below. This is an important fact which is used in the proof of the main result.

\begin{theorem}
[\vungoc\ \cite{vu ngoc}, Th.\ 5.3]
\label{th: vu ngoc th 5.3}
Let $x \in \R$. If $x$ is not a critical value of $J$, then the derivative of the Duistermaat-Heckman function is given by 
\begin{equation*}
\rho'_J(x)=\alpha^+(x) - \alpha^-(x),
\end{equation*}
where $\alpha^{\pm}(x)$ denotes the slope of the top (respectively bottom) edge of $\polygon$ intersecting the vertical line through $x$. Otherwise, 
\begin{equation}
 \label{eq:weightformula}
\rho'_J(x+0) - \rho'_j(x-0 ) = -e^+ - e^- - j_x 
\end{equation}
where $e^+$ (respectively $e^-$) is zero if there is no top (respectively bottom) vertex whose first coordinate is $x$, or $e^\pm = -\frac{1}{a^\pm b^\pm}$, where $a^{\pm},b^{\pm}$ are the isotropy weights of the $S^1$-action at the corresponding vertices, and $j_x$ is the number of focus-focus critical points of $\Phi$ lying in $J^{-1}(x) \subset M$.
\end{theorem}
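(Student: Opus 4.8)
The plan is to establish the formula separately at regular and critical values of $J$, in each case transferring the computation to the semi-toric polygon $\polygon$ via the map $f$ of Theorem \ref{th: vu ngoc th 3.8}. For a regular value $x$, the fiber of $J^{-1}(x)$ is an $S^1$-invariant reduced space; the Duistermaat-Heckman function $\rho_J(x)$ equals the symplectic volume of the reduced space $J^{-1}(x)/S^1$, which is a surface whose area is (up to normalization) the length of the vertical slice $\polygon \cap \{(x,y)\}$. Since by Theorem \ref{th: vu ngoc th 3.8}\ref{item:8} the map $f$ is integral affine away from the cuts $l^{\boldsymbol{\ep}}$ and of the form $f(x,y)=(x,f^{(2)}(x,y))$, and since the integral affine structure $\mathcal A$ on $B$ is precisely the action-variable structure, this slice length is computed by integrating the difference of the slopes $\alpha^+(x)-\alpha^-(x)$ of the top and bottom edges of $\polygon$. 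Differentiating gives $\rho_J'(x) = \alpha^+(x)-\alpha^-(x)$ directly. The presence of cuts does not affect this, because a cut $l_i^{\ep_i}$ introduces a shear $\left(\begin{smallmatrix}1&0\\j(s)&1\end{smallmatrix}\right)$ which preserves vertical lines and hence the slice length; alternatively one observes the Duistermaat-Heckman function is intrinsic to $\hamiltonian$ and does not depend on the choice of $\boldsymbol{\ep}$.

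For the jump formula at a critical value $x$ of $J$, the idea is to account separately for the three sources of criticality in the fiber $J^{-1}(x)$: elliptic-elliptic points appearing as Delzant or hidden Delzant vertices of $\polygon$ at height $x$, elliptic-regular orbits appearing as fake vertices, and focus-focus points appearing as marked interior points with first coordinate $x$. For the elliptic-elliptic (vertex) contributions, I would invoke the local normal form of Section \ref{sec:str}: near an elliptic-elliptic point $p$ with $\Phi_{\mathrm{ee}}=(q_1,q_2)$, the moment map $J$ is, in suitable coordinates, a positive-rational-combination of $\frac12(x^2+\xi^2)$ and $\frac12(y^2+\eta^2)$ determined by the isotropy weights $a^{\pm}, b^{\pm}$ of the $S^1$-action; the standard toric Duistermaat-Heckman computation then contributes a kink of size $\mp\frac{1}{a^{\pm}b^{\pm}}$ to $\rho_J'$ at $x$, matching $e^{\pm}$. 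The slope change of the polygon's edges at such a vertex is read off from Lemma \ref{lemma:df}: at a Delzant vertex the edge primitives $\mathbf u,\mathbf w$ span $\Z^2$, and the weights $a^{\pm},b^{\pm}$ are exactly the components encoding how $J$ restricts to the edges, so the polygon slope jump and the weight contribution agree; at a hidden Delzant vertex the same holds after applying $A_v$, and one checks the cut-induced shear contributes the remaining integer so that the total matches. For fake vertices, $\Phi^{-1}(v)$ is an elliptic-regular orbit, which contributes no kink to $\rho_J$ (the reduced space varies smoothly through such a value in the transverse elliptic direction, only the affine chart changes), consistent with $e^{\pm}=0$ there.

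For the focus-focus contribution, the key input is the local structure near a focus-focus fiber: by the Eliasson-Miranda-Zung normal form with $\Phi_{\mathrm{ff}}=(x\eta-y\xi,\,x\xi+y\eta)$ and the connectedness of fibers (Theorem \ref{th: vu ngoc th 3.8} and \vungoc's Theorem 3.4), a pinched-torus singular fiber lies over $c_i$ with $x$-coordinate $x$, and the monodromy of the affine structure around $c_i$ is conjugate to $\left(\begin{smallmatrix}1&0\\1&1\end{smallmatrix}\right)$ raised to the multiplicity $j(c_i)$. Crossing the cut $l_i^{\ep_i}$ the map $f$ picks up the shear $\left(\begin{smallmatrix}1&0\\ j(s)&1\end{smallmatrix}\right)$ (Theorem \ref{th: vu ngoc th 3.8}\ref{item:10}), so although $\polygon$ has no vertex at height $x$ coming from $c_i$, the top-edge slope $\alpha^+$ and bottom-edge slope $\alpha^-$ of $\polygon$ jump when the vertical line through $x$ crosses the image $f(c_i)$ of the cut base — and the net effect on $\alpha^+(x)-\alpha^-(x)$ is a decrease by $j_x = \sum_{i:\, x_i = x} j(c_i)$. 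Summing the three contributions gives $\rho_J'(x+0)-\rho_J'(x-0) = -e^+-e^--j_x$. The main obstacle I anticipate is bookkeeping the signs and the interplay between the cut-shear factors and the vertex matrices $A_v$ of Lemma \ref{lemma:df}: one must be careful that, for a hidden Delzant vertex lying on the same vertical line as a focus-focus cut base, the contributions are not double-counted, and that the orientation conventions fixed in Remark \ref{rmk:tau} (with $x$ increasing left to right and $f$ orientation-preserving) produce the stated signs rather than their negatives. This is best handled by first proving the regular-value formula cleanly, then obtaining the jump formula by taking one-sided limits of $\alpha^{\pm}$ and carefully matching the discrete data supplied by the normal forms and by Theorem \ref{th: vu ngoc th 3.8}\ref{item:10}.
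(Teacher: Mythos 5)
The paper contains no proof of this statement: it is imported verbatim from \vungoc's article (Theorem~5.3 of \cite{vu ngoc}) and used as a black box, so there is nothing internal to compare your argument against. Judged on its own, your strategy is the natural (and, in essence, the original) one: identify $\rho_J(x)$, up to normalization, with the Euclidean length of the vertical slice $\polygon \cap (\{x\}\times \R)$ using the fact that $f\circ \Phi$ consists of action variables away from the cuts and singular fibres, read the regular-value formula off the boundary slopes, and read the jump formula off the slope discontinuities of $\partial\polygon$ together with the local normal forms. The regular-value half is fine (modulo the slip that the slice length \emph{is} the height difference, whose derivative is $\alpha^+-\alpha^-$; one does not ``integrate the difference of the slopes'').

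The part that needs repair is the accounting of the $-j_x$ term. As written it is internally inconsistent: you assert both that fake vertices contribute no kink (``the reduced space varies smoothly through such a value'') and that $\polygon$ ``has no vertex at height $x$ coming from $c_i$'', yet the slope discontinuity you then invoke for the focus-focus contribution occurs \emph{exactly at} the fake (or hidden Delzant) vertex where the cut $l_i^{\ep_i}$ meets the boundary -- and $x$ \emph{is} a critical value of $J$ there, so $\rho_J'$ genuinely jumps. The correct bookkeeping, which your sketch asserts but does not verify, comes from two determinant computations based on Lemma \ref{lemma:df} and equation \eqref{eq:7}: at a fake vertex $\det(\mathbf{u}\,A_v\mathbf{w})=0$ forces $\mathbf{w}=A_v^{-1}\mathbf{u}$, so the boundary slope jumps by $-\ep_v n_v$, which sums over the cuts to $-j_x$; at a Delzant or hidden Delzant top (resp.\ bottom) vertex, convexity gives $\det(\mathbf{u}\,A_v\mathbf{w})=\mp 1$ and hence a slope contribution to $\alpha^+-\alpha^-$ equal to $-\tfrac{1}{u_1w_1}-\ep_v n_v$, i.e.\ the $-e^{\pm}$ term and the relevant portion of $-j_x$ appear in a \emph{single} discontinuity and must not be counted twice (this is precisely the double-counting danger you flag, but it must be resolved, not merely anticipated). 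Finally, one must check via Proposition \ref{prop: elliptic-elliptic weights} that $a^{\pm}b^{\pm}=-u_1w_1$ with the outgoing-primitive convention, so that $-\tfrac{1}{u_1w_1}=-e^{\pm}$. With those computations supplied, your outline closes into a proof.
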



\section{The main theorem: from semi-toric polygons to labeled directed graphs}\label{sec: from VN to K}

Recall that there is a functor $\mathcal{F}: \st \to \Hs$, which sends a semi-toric system $(M,\omega,\Phi=(J,H))$ to its underlying Hamiltonian $S^1$-space $\hamiltonian$ (cf.\ Remark \ref{rmk:fun}). A natural question to ask is to describe how to recover the invariants of (the isomorphism class of) a Hamiltonian $S^1$-space underlying (the isomorphism class of) a semi-toric system from the invariants of the latter. On the one hand, isomorphism classes of Hamiltonian $S^1$-spaces are classified by their associated labeled directed graphs (cf.\ Section \ref{def karshon graph} and Karshon \cite{karshon}); on the other, there is no theorem classifying isomorphism classes of semi-toric systems in full generality (cf.\ Pelayo $\&$ \vungoc\ \cite{pelayo-vu ngoc inventiones,pelayo_vu_ngoc_con} for the classification of generic semi-toric systems). However, all that is needed to recover the labeled directed graphs of Hamiltonian $S^1$-spaces underlying semi-toric systems are the number of focus-focus critical points and the associated semi-toric polygons introduced in Section \ref{sec:compl-integr-hamilt}: this is the content of the main theorem of this paper, stated below.

\begin{theorem}\label{thm: main}
Let $\semitoric$ be a semi-toric system with $m_f$ focus-focus critical points, and let $\hamiltonian$ denote the underlying Hamiltonian $S^1$-space. For any choice of cuts $\boldsymbol{\ep} \in \{+1,-1\}^{m_f}$, the associated semi-toric polygon $\polygon$ and $m_f$ determine the labeled directed graph $\Gamma$, thus classifying $\hamiltonian$ up to isomorphisms in the $\Hs$ category. 
\end{theorem}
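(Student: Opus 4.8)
The plan is to reconstruct Karshon's labeled directed graph $\Gamma$ of $\hamiltonian$ piece by piece, reading each ingredient off the semi-toric polygon $\polygon$ together with the datum $m_f$. Recall that $\Gamma$ consists of: (i) the vertices, corresponding to connected components of $M^{S^1}$ (isolated fixed points or fixed surfaces), labeled by the value of $J$, with extra labels (genus, normalized area) on fat vertices; (ii) the directed edges, corresponding to $\Z_k$-spheres, going from south to north pole, labeled by the isotropy weight $k$. The key observation is that every fixed point of the $S^1$-action defined by $J$ lies in a fiber of $\Phi$ that projects to the boundary $\partial B$, hence to the boundary of $\polygon$: elliptic-elliptic points correspond to Delzant and hidden Delzant vertices of $\polygon$ (by Remark~\ref{rmk:pv}), while a fixed \emph{surface} for the $S^1$-action must arise from a vertical edge of $\polygon$ (an edge on which $J$ is constant), and the focus-focus points, which are fixed neither for the $S^1$-action directly nor contribute vertices, are accounted for by the marked points and $m_f$.

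First I would fix a semi-toric polygon $\polygon$ and classify its vertices via Definition~\ref{defn:vertices} and Lemma~\ref{lemma:df}: for each vertex $v$ strictly between $J_{\min}$ and $J_{\max}$, the primitive tangent vectors $\mathbf{u},\mathbf{w}$ (together with the correction matrix $A_v$ if $v$ is hidden Delzant or fake) determine whether $(f\circ\Phi)^{-1}(v)$ is an elliptic-elliptic point or an elliptic-regular circle, and in the former case they determine the isotropy weights of the $S^1$-action at that fixed point via the Eliasson-Miranda-Zung normal form (the $S^1$-weights are recovered from $\mathbf{u},\mathbf{w}$ by reading off the first components, since $x=J$ is the chosen affine coordinate). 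Fake vertices are then discarded; Delzant and hidden Delzant vertices, together with any Delzant vertices lying on the extremal vertical lines $\{x=J_{\min}\}$ and $\{x=J_{\max}\}$, produce the \emph{non-fat} vertices of $\Gamma$, labeled by their $x$-coordinate ($=J$-value) and marked extremal when appropriate. Next I would detect fat vertices: a fixed surface for the $S^1$-action shows up as a vertical edge of $\polygon$ lying over $x=J_{\min}$ or $x=J_{\max}$ (these are the only places fat vertices can occur, by Remark~\ref{rmk:properties}); its genus is zero in the toric/semi-toric world so the genus label is $0$, and its normalized symplectic area is recovered from the affine length of that vertical edge together with the Duistermaat-Heckman data of Theorem~\ref{th: vu ngoc th 5.3}. (Here one must check that the only surfaces occurring are spheres; this follows from connectedness of the fibers and the local normal forms of Section~\ref{sec:str}, since a fixed surface is a regular fiber of $H$ restricted to $J^{-1}(J_{\min})$, which is a sphere because an elliptic-regular circle degenerates to it.)

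Then I would construct the edges. By Lemma~\ref{lemma: Zk spheres}, every $\Z_k$-sphere ($k>1$) has two isolated fixed points as poles; these correspond to a pair of non-extremal or extremal vertices of $\Gamma$ already found, and the weight $k$ is exactly the isotropy weight appearing in the Eliasson-Miranda-Zung / Karshon local normal form at those fixed points, which in turn is extracted from the slopes of the edges of $\polygon$ at the corresponding vertices (a vertex with edge-primitives $\mathbf{u}=(a_1,b_1)$, $\mathbf{w}=(a_2,b_2)$ contributes an isotropy subgroup $\Z_{|a_i|}$ along each incident edge of $B$). Concretely, each edge of $\polygon$ that is \emph{not} vertical has a primitive direction $(p,q)$ with $p\neq 0$; if $|p|>1$ this signals a $\Z_{|p|}$-sphere running between the two vertices the edge connects, oriented from lower $x$ (south) to higher $x$ (north). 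One must reconcile this with the focus-focus points: an edge of $\polygon$ may cross the image of a cut, where the slope jumps by the matrix $\bigl(\begin{smallmatrix}1&0\\ j(s)&1\end{smallmatrix}\bigr)$; since this jump changes only the second coordinate, the first component $p$ of the primitive direction — hence the $\Z_k$-sphere structure it encodes — is unaffected, which is precisely why $J$ alone, i.e.\ $\polygon$ read columnwise, suffices. Finally I would verify that the graph so produced is independent of the choice of cuts $\boldsymbol\ep$ (using Remark~\ref{rmk:choice}: two semi-toric polygons for different cuts differ by a fibered piecewise-$\msT$ transformation, which preserves all $x$-coordinates and all first components of edge-primitives, hence preserves vertices, their labels, and edges) and that it equals Karshon's $\Gamma$ for $\hamiltonian$; the latter is then an application of Theorem~\ref{th: karshon classification}.

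The main obstacle I anticipate is the careful bookkeeping near focus-focus fibers and the cuts: one must show that the presence of marked interior points and the associated slope-discontinuities of $f$ do not create, destroy, or relocate any vertex, edge, or label of $\Gamma$, and in particular that a focus-focus critical value lying on the vertical line $\{x=x_0\}$ contributes to the jump $\rho'_J(x_0+0)-\rho'_J(x_0-0)$ exactly the term $-j_{x_0}$ of Theorem~\ref{th: vu ngoc th 5.3} and nothing to the fixed-point structure of the $S^1$-action. This requires genuinely using the Eliasson-Miranda-Zung normal form for focus-focus points (Section~\ref{sec:str}) and the connectedness of the fibers of $\Phi$ (Remark~\ref{rmk:sttoric}); everything else — translating slopes into weights, spheres, areas — is, modulo the Karshon dictionary, routine. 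I expect the proof of Theorem~\ref{thm: main} in the paper to be organized as a sequence of lemmas handling: the non-fat vertices and their weights; the fat vertices and their areas; the edges and their weights; the focus-focus correction; and independence of $\boldsymbol\ep$.
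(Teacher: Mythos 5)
There is a genuine gap, and it concerns precisely the feature that distinguishes the semi-toric case from the toric one: the focus-focus points. Your proposal asserts that ``every fixed point of the $S^1$-action defined by $J$ lies in a fiber of $\Phi$ that projects to the boundary $\partial B$'' and that the focus-focus points ``are fixed neither for the $S^1$-action directly nor contribute vertices.'' Both claims are false. A focus-focus point $p$ is a rank~$0$ critical point of $\Phi$, so $\diff J(p)=0$ and $p$ is an \emph{isolated fixed point} of the $S^1$-action, projecting to an \emph{interior} marked point of $B$; this is the content of Lemma~\ref{lemma:vertices}. Consequently $\Gamma$ has $m_f$ additional vertices, one for each focus-focus point, labeled by the first coordinates of the marked points of $\polygon$ (this is exactly why $m_f$ appears as part of the data in the statement). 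A graph built by your recipe omits these $m_f$ vertices and is therefore not Karshon's graph; by Theorem~\ref{th: karshon classification} it would classify a different Hamiltonian $S^1$-space. Your own use of Theorem~\ref{th: vu ngoc th 5.3} betrays the inconsistency: the term $-j_x$ in the jump of $\rho_J'$ \emph{is} the fixed-point contribution of the focus-focus points (each with weights $\{+1,-1\}$, giving $1/(ab)=-1$), so they cannot simultaneously contribute to that jump and be absent from the fixed-point set.

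This omission also undermines your edge construction. To read the $\Z_k$-spheres off the boundary of $\polygon$ you must first know that their poles are elliptic-elliptic points, i.e.\ that no focus-focus point is a pole of a $\Z_k$-sphere with $k\geq 2$, and that no $\Z_k$-sphere passes through the interior of $B$. The paper establishes this by showing that the system-preserving effective Hamiltonian $S^1$-action near a focus-focus point is unique up to sign (Proposition~\ref{prop:unique_system_pres_action}) and that the isotropy weights there are $\{+1,-1\}$ (Proposition~\ref{prop: weights extendable ff}), yielding Corollary~\ref{cor:zknoff} and then Proposition~\ref{prop:ced} (the image of a $\Z_k$-sphere is a curved edge of $B$). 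None of this appears in your proposal, and it cannot be dismissed as bookkeeping: it is the substantive new input beyond the toric dictionary. The remainder of your plan --- Delzant and hidden Delzant vertices giving elliptic-elliptic fixed points with weights read from the first components of $\mathbf{u},\mathbf{w}$, fat vertices from vertical edges at $J_{\mathrm{min}},J_{\mathrm{max}}$, and chains of edges across fake vertices encoding a single $\Z_k$-sphere --- does match the paper's argument.
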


Throughout this section, fix a semi-toric system $\semitoric$ along with a semi-toric polygon $\polygon$ and $m_f$ focus-focus critical points, and denote its underlying Hamiltonian $S^1$-space by $\hamiltonian$ unless otherwise stated. This automatically sets the homeomorphism $f: B =\Phi(M) \to \polygon$ given by Theorem \ref{th: vu ngoc th 3.8}. Recall that the labeled directed graph $\Gamma$ associated to $\hamiltonian$ is determined by the \textbf{vertex set} $V$ and its labeling (i.e.\ the connected components of the fixed point set $M^{S^1}$, and their topological and symplectic properties respectively), and the {\bf edge set} $E$ and its labeling (i.e.\ $\Z_k$-spheres). Recovering $V$ and its labeling from $\polygon$ is the aim of Section \ref{sec:vertex-set-v}, while Section \ref{sec:edge-set-e} deals with `seeing' $\Z_k$-spheres from $\polygon$. The proof of Theorem \ref{thm: main} is then given in Section \ref{sec:proof-theor-refthm}, which brings everything together. \\

The scheme of the proof of Theorem \ref{thm: main} mimics the relation between symplectic toric manifolds and their associated Hamiltonian $S^1$-spaces, which is recalled below.

\begin{remark}[Karshon \cite{karshon}, Section 2.2]\label{rk:delham}
The following table shows how to pass from a Delzant polygon $\Delta$ associated to a symplectic toric manifold $(M,\omega,\mu)$ to the labeled directed graph $\Gamma$ of its associated Hamiltonian $S^1$-space. This serves as a guide to follow the ideas of the forthcoming sections.
\end{remark}

\begin{longtable}{l p{90mm}}
{\bf Vertex set V:} & Each vertex $v_{\Delta}$ of $\Delta$ which is not incident to a vertical edge corresponds to a vertex $v_{\Gamma}$ of $\Gamma$. A vertical edge $e^{\mathrm{vert}}_{\Delta}$ of $\Delta$ gives rise to a fat vertex $v^{\mathrm{fat}}_{\Gamma}$ of $\Gamma$.\\
{\bf Labeling of V:} &  Each vertex $v_{\Gamma}$ (respectively $v^{\mathrm{fat}}_{\Gamma}$) in $V$ is labeled with the value of the first coordinate of the corresponding vertex $v_{\Delta}$ (respectively of the corresponding vertical edge $e^{\mathrm{vert}}_{\Delta}$) in $\Delta$. Fat vertices are also labeled with 0 for the genus of the corresponding fixed surfaces (they are all spheres) and with the length of the corresponding vertical edge of $\Delta$ for its normalised symplectic area. \\
{\bf Edge set E:} & An edge $e_{\Delta}$ of $\Delta$ whose primitive tangent vector is of the form $(k,b) \in \Z^2$ (for $k \geq 2$) gives rise to an edge $e_{\Gamma}$ in $\Gamma$ joining the vertices in $V$ corresponding to the vertices of $\Delta$ of $e_{\Delta}$ (note that these vertices can never be fat). \\
{\bf Labeling of E:} & Each edge $e_{\Gamma}$ of $\Gamma$ is labeled with the integer $k \geq 2$, where $(k,b) \in \Z^2$ is a primitive tangent vector to the corresponding edge $e_{\Delta}$ of $\Delta$.
\end{longtable}


\subsection{Fixed point set $M^{S^1}$: vertex set $V$ and its labeling}\label{sec:vertex-set-v}
This section describes how to construct and label the vertex set $V$ of $\Gamma$ from the fixed data $m_f$ and $\polygon$. By definition, vertices of $\Gamma$ correspond to connected components of $M^{S^1}$, the fixed point set of the $S^1$-action whose moment map is $J$. Any point $p \in M$ which is fixed by this $S^1$-action satisfies $\diff J(p) =0$. Thus $p$ is also a critical point of $\Phi$. The next lemma characterizes isolated fixed points of the $S^1$-action in terms of critical points of $\Phi$.

\begin{lemma}\label{lemma:vertices}
  The isolated fixed points in $M^{S^1}$ are either
  \begin{enumerate}[label=(V\arabic*), ref=(V\arabic*)]
  \item \label{item:5} focus-focus critical points of $\Phi$, or
  \item \label{item:6} elliptic-elliptic critical points of $\Phi$ whose image in $\polygon$ is not a vertex of a vertical edge.
  \end{enumerate}
\end{lemma}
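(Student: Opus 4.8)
The plan is to classify the isolated fixed points of the $S^1$-action (those $p \in M^{S^1}$ that form a singleton connected component of $M^{S^1}$) by combining two facts: first, that every such $p$ is a critical point of $\Phi$, and second, the trichotomy of strongly non-degenerate singular points of a semi-toric system, namely elliptic-elliptic points, focus-focus points, and elliptic-regular orbits. Since the fixed points of the $S^1$-action are precisely the critical points of $J$, and a critical point of $J$ is in particular a critical point of $\Phi = (J,H)$, we only have to examine which of the three singularity types can occur at an isolated fixed point, and to understand the local picture of $M^{S^1}$ near each.

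First I would dispose of the elliptic-regular orbits: by the Eliasson--Miranda--Zung local normal form of Section \ref{sec:str}, near such an orbit $\mathcal{O}$ the system is modeled by $\Phi_{\mathrm{er}} = (q_1,q_2)$ with $q_1 = \tfrac12(x^2+y^2)$, $q_2 = a$, and $\mathcal{O}$ is a circle along which $q_1$ is critical but $q_2 = a$ is not. After composing with the diffeomorphism $\psi$ of the normal form (which only reshuffles the target), the $S^1$-action generated by $J$ near $\mathcal{O}$ is, up to sign and the $\msT$-type ambiguity, generated either by $q_1$ or by a combination of $q_1$ and $q_2$; in either case the orbit $\mathcal{O}$ is fixed only if $J$ restricts to $\pm q_1$ plus functions vanishing to second order, and then $\mathcal{O}$ is a one-dimensional family of fixed points, hence $\mathcal{O} \subset M^{S^1}$ is not isolated. (Here I would use Remark \ref{rmk:pv} and the description of curved edges to pin down that an elliptic-regular orbit is exactly what maps to a fake vertex or to the interior of an edge, neither of which is a vertex of a vertical edge, but more to the point it is never an isolated point of $M^{S^1}$.) So elliptic-regular orbits are excluded.

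Next, the elliptic-elliptic case: by the normal form, near such a point $p$ the system is $\Phi_{\mathrm{ee}} = (q_1,q_2)$ with $q_1 = \tfrac12(x^2+\xi^2)$, $q_2 = \tfrac12(y^2+\eta^2)$, so both $q_1$ and $q_2$ generate circle actions with $p$ an isolated common fixed point. The $S^1$-action generated by $J$ near $p$ is generated by some integer combination $m_1 q_1 + m_2 q_2$ (after the $\psi$ and $\msT$ adjustments, using that $x = J$ is an integral affine coordinate — cf.\ Remark \ref{rmk:tau}); $p$ is an isolated fixed point of this action precisely when both $m_1 \neq 0$ and $m_2 \neq 0$, equivalently when $J$ is not (up to affine equivalence) one of the two coordinates $q_1,q_2$ alone. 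Unpacking this condition through the straightening map $f$ of Theorem \ref{th: vu ngoc th 3.8}: if $J$ is proportional to $q_1$ or $q_2$ near $p$ then the image $f(\Phi(p))$ sits on a vertical edge of $\polygon$ (the edge where the other coordinate is extremal and $J$ varies), and $p$ belongs to the corresponding fixed surface (the preimage of that vertical edge), hence is non-isolated; otherwise $f(\Phi(p))$ is a vertex of $\polygon$ that is not incident to a vertical edge, and $p$ is isolated. This yields exactly case \ref{item:6}. Finally, for a focus-focus point $p$: the normal form $\Phi_{\mathrm{ff}} = (q_1,q_2)$ with $q_1 = x\eta - y\xi$, $q_2 = x\xi + y\eta$ shows that $q_1$ generates an $S^1$-action with $p$ an isolated fixed point, and since in a semi-toric system $J$ is (up to the $\msT$-ambiguity and sign) exactly this rotational coordinate $q_1$ — this is the standard structure of a focus-focus fibration, cf.\ \vungoc\ \cite{vu_ngoc_ff} — the point $p$ is always an isolated fixed point of the $S^1$-action. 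Conversely any isolated fixed point which is a focus-focus critical point is of this form, giving case \ref{item:5}.

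\textbf{Main obstacle.} The delicate step is the elliptic-elliptic analysis: one must carefully argue that the linear functional $J$ (in the normal-form coordinates, after the allowed post-composition by $\psi$ and by an element of $\msT$) really has the form $m_1 q_1 + m_2 q_2$ with $(m_1,m_2)$ integers, and then translate the vanishing of $m_1$ or $m_2$ into the statement "$f(\Phi(p))$ is a vertex of a vertical edge of $\polygon$ — in which case $p$ lies on a fixed surface — versus a genuine non-vertical-edge vertex." This requires keeping straight the interplay between the Eliasson--Miranda--Zung local coordinates, the global action coordinates used to build $\polygon$, and the bookkeeping of which edge of $\polygon$ is vertical (i.e.\ where $J$ is extremal); the $S^1$-equivariance and the fact that $x=J$ is fixed as an integral affine coordinate (Remark \ref{rmk:tau}) are what make this rigidifying argument go through. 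Everything else — the exclusion of elliptic-regular orbits and the focus-focus case — is essentially a direct reading of the relevant normal forms.
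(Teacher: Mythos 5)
Your proposal is correct and follows essentially the same route as the paper: the paper's (much terser) proof likewise excludes elliptic-regular orbits because the $S^1$-stabilizer is constant along the circle fiber $\Phi^{-1}(\Phi(p))$, reduces to rank-$0$ points, and then reads the dichotomy between focus-focus points (always isolated) and elliptic-elliptic points (isolated iff neither isotropy weight vanishes, i.e.\ iff the image is not a vertex of a vertical edge) off the Eliasson--Miranda--Zung normal form. Your write-up simply makes explicit the weight computation at elliptic-elliptic points that the paper leaves implicit.
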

\begin{proof}
  This uses the Eliasson-Miranda-Zung local normal form (cf.\ Section \ref{sec:str}). First, observe that any isolated fixed point $p \in M^{S^1}$ cannot be of elliptic-regular type, since, if so, all points in $\Phi^{-1}(\Phi(p))$ (= the orbit of $p$ of the Hamiltonian $\R^2$-action whose moment map is $\Phi$) have the same stabilizer. Thus $p$ needs to satisfy $\mathrm{rk} \, D_p\Phi =0$. It follows again from the Eliasson-Miranda-Zung local normal form theorem that the isolated fixed points in $M^{S^1}$ are precisely rank 0 points satisfying conditions \ref{item:5} and \ref{item:6}.
\end{proof}

Having dealt with isolated fixed points in $M^{S^1}$, consider the fixed surfaces, which, by the following proposition, are precisely as in the case of symplectic toric manifolds (cf.\ Remark \ref{rk:delham}).

\begin{prop}
\label{prop: fixed surf}
Let $\Sigma$ be a connected surface which is fixed by the $S^1$-action. Then either $\Sigma=J^{-1}(J_{\mathrm{min}})$ or $\Sigma=J^{-1}(J_{\mathrm{max}})$, where $J_{\mathrm{min}}$ (respectively $J_{\mathrm{max}}$) is the minimum (respectively maximum) value of $J$ on $M$. Moreover, $\Sigma$ is a symplectic sphere. 
\end{prop}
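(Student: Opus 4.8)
The plan is to argue using the Eliasson--Miranda--Zung local normal form together with the structure of the curved polygon $B = \Phi(M)$ and the connectedness of the fibers of $\Phi$. Let $\Sigma$ be a connected fixed surface of the $S^1$-action. Since every point of $\Sigma$ is a critical point of $J$ with $\diff J$ vanishing there, every point of $\Sigma$ is a critical point of $\Phi$; moreover, since $\Sigma$ is two-dimensional and consists of fixed points of the circle action, a point $p \in \Sigma$ cannot be of focus-focus type (those are isolated in $M^{S^1}$ by Lemma \ref{lemma:vertices}) nor of elliptic-elliptic type (also isolated), so by the trichotomy of singular orbits in Section \ref{sec:str} each $p \in \Sigma$ lies on an elliptic-regular orbit. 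In the elliptic-regular normal form $\Phi_{\mathrm{er}} = (q_1, q_2)$ with $q_1 = \frac12(x^2+y^2)$, $q_2 = a$, the circle on which the $\R^2$-action is singular is $\{x=y=a=0\}$, and the $S^1$-action generated by $q_1$ is rotation in the $(x,y)$-plane; for this to be a \emph{fixed} point of the ambient $S^1$-action we need the isotropy weight of $J$ restricted to the elliptic factor to force $\diff J = 0$ along $\Sigma$, which pins down $\Phi(\Sigma)$ to lie in the locus where the relevant action coordinate is extremal.

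\textbf{Key steps.} First I would show $\Phi(\Sigma)$ is a single point, or more precisely that $J$ is constant on $\Sigma$: since $\Sigma$ is connected and $\diff J \equiv 0$ on $\Sigma$, this is immediate. Call this value $c$. Next I would show $c \in \{J_{\mathrm{min}}, J_{\mathrm{max}}\}$. Near a point $p \in \Sigma$ use the elliptic-regular normal form: $\Sigma$ locally looks like $\{x=y=0\}$ (a surface parametrized by $(a,\theta)$), and $J$ is, up to the local diffeomorphism $\psi$, a function of $(q_1,q_2) = (\tfrac12(x^2+y^2), a)$. Because $J$ generates a \emph{globally periodic} Hamiltonian flow (it is the moment map of an $S^1$-action) and $\diff J$ vanishes on $\{x=y=0\}$, the derivative of $J$ with respect to $a$ must vanish along $\Sigma$, so locally $J$ depends only on $q_1 = \tfrac12(x^2+y^2) \geq 0$, which attains its minimum exactly on $\{x=y=0\}$. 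Hence $J - c$ has a definite sign near $\Sigma$ (either $\geq 0$ everywhere nearby or $\leq 0$ everywhere nearby, according to the sign of $\partial J/\partial q_1$), so $\Sigma$ is contained in the set of local extrema of $J$; since $M$ is connected and compact, a connected fixed surface of local extrema on which $J$ is constant must sit at the global minimum or the global maximum. Equivalently, in the language of the polygon, $\Phi(\Sigma)$ is a curved edge of $B$ lying in a vertical line $\{x = J_{\mathrm{min}}\}$ or $\{x = J_{\mathrm{max}}\}$, and $\Sigma = J^{-1}(c)$ is exactly the preimage of that whole edge (using connectedness of fibers of $\Phi$, Remark \ref{rmk:sttoric}, to conclude $J^{-1}(c)$ is connected and equals $\Sigma$).

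\textbf{That $\Sigma$ is a sphere.} For the last assertion I would restrict $\Phi = (J,H)$ to the extremal level set $\Sigma = J^{-1}(J_{\mathrm{min}})$ (say). Since $J$ is constant on $\Sigma$ and $\{J,H\} = 0$, $H|_\Sigma$ is itself a smooth function on the closed surface $\Sigma$ whose critical points are precisely the rank-zero singular points of $\Phi$ lying in $\Sigma$, i.e.\ the elliptic-elliptic points mapping to the vertices of the vertical edge bounding that curved edge of $B$; by the elliptic-elliptic normal form these are nondegenerate (Morse) critical points of $H|_\Sigma$, and the remaining points of $\Sigma$ are elliptic-regular orbits along which $H|_\Sigma$ is regular. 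A curved edge of $B$ has exactly two endpoints (two vertices), so $H|_\Sigma$ is a Morse function with exactly two critical points (one max, one min); by the classical Reeb/Morse argument a closed connected surface admitting a Morse function with only two critical points is a sphere. (Alternatively one can invoke that $\Sigma$ is a symplectic submanifold — it is a connected component of $M^{S^1}$, hence symplectic — carrying an effective residual Hamiltonian $S^1$-action, namely the one generated by $H$ after subtracting its value at a vertex, with isolated fixed points, forcing genus zero.) I expect the main obstacle to be Step 2, ruling out that $c$ is a non-extremal value: one must carefully use that the elliptic-regular normal form is compatible with the \emph{global} periodicity of the $J$-flow to force $J$ to be a function of $q_1$ alone (with no dependence on $a$) in a neighborhood of $\Sigma$, rather than just a function of $(q_1,q_2)$ vanishing to first order on $\{x=y=0\}$; equivalently, one must show that a fixed surface cannot occur in the interior of the $J$-interval. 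This is where the hypothesis that $J$ generates a circle action (not merely an $\mathbb{R}$-action) is essential, and it mirrors the fact in Karshon's setting that fixed surfaces only occur at the extrema of the moment map (Remark \ref{rmk:properties}).
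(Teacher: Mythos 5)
Your overall strategy (local normal forms to show $\Sigma$ is a local extremum of $J$, then a topological argument for the sphere) is close in spirit to the paper's, but there are two genuine problems. The decisive one is the local-to-global step: you write that ``since $M$ is connected and compact, a connected fixed surface of local extrema on which $J$ is constant must sit at the global minimum or the global maximum.'' This is false for arbitrary smooth functions on a compact connected manifold, which can have many local minima at different values; compactness and connectedness of $M$ buy you nothing here. What is actually needed is the special structure of $J$ as a moment map of an $S^1$-action: its level sets are connected and its local extrema are global (equivalently, $J$ is Morse--Bott with even indices). The paper invokes exactly this via the Atiyah--Guillemin--Sternberg convexity theorem, after first citing Karshon's local normal form \cite[Cor.\ A.7]{karshon} to get local extremality. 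You correctly sense in your closing paragraph that the circle-action hypothesis is essential, but you deploy it only in the local analysis (to get a definite sign of $J-c$ near $\Sigma$), not where it is actually indispensable, namely to rule out a local extremum occurring at an interior value of $J$.

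A second, more minor but still incorrect, assertion is that every point of $\Sigma$ is of elliptic-regular type because elliptic-elliptic points are ``isolated'' in $M^{S^1}$. Lemma \ref{lemma:vertices} of the paper says the opposite: elliptic-elliptic points whose image is a vertex of a vertical edge are precisely the ones that are \emph{not} isolated fixed points, and indeed the two poles of the fixed sphere $\Sigma$ are elliptic-elliptic points of $\Phi$ (they map to the endpoints of the vertical edge $f\circ\Phi(\Sigma)$). Your elliptic-regular normal form computation is therefore only valid on the complement of these two points, and your Morse-theoretic argument for the sphere (two nondegenerate critical points of $H|_\Sigma$) tacitly relies on those very elliptic-elliptic poles existing and being handled by the elliptic-elliptic normal form, which you have just excluded. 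The sphere argument itself is a legitimate alternative to the paper's (the paper instead produces global action-angle coordinates near $\Sigma$ and observes that $\Sigma$ carries an effective Hamiltonian circle action, hence is a sphere --- essentially your parenthetical alternative), but as written it rests on the mistaken classification of the points of $\Sigma$. Repair both points --- cite connectedness of the fibers of $J$ (Atiyah, Guillemin--Sternberg) for local-implies-global extremality, and treat the two elliptic-elliptic poles of $\Sigma$ separately --- and the proof goes through.
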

\begin{proof}
By a standard argument which uses local normal forms (cf.\ Karshon \cite[Cor.\ A.7]{karshon}), 
$\Sigma$ is a symplectic surface which is a local minimum or maximum  for $J$.
Since $J$ is a moment map for an $S^1$-action, by the Atiyah-Guillemin-Sternberg convexity theorem,
a local minimum or maximum is global, and is connected (cf.\ Atiyah \cite{atiyah}, Guillemin $\&$ Sternberg \cite{guillemin-sternberg}).
From this it follows that $\Sigma=J^{-1}(J_{\mathrm{min}})$ or $\Sigma=J^{-1}(J_{\mathrm{max}})$.

It remains to prove that $\Sigma$ is a sphere. Without loss of generality suppose that $\Sigma=J^{-1}(J_{\mathrm{min}})$. Then there exists $x_0>J_{\mathrm{min}}$ such that $\Sigma=M^{S^1}\cap J^{-1}([J_{\mathrm{min}},x_0[)$, $A=\Phi(J^{-1}([J_{\mathrm{min}},x_0[))\subset \R^2$ is simply connected, and $A$ contains no focus-focus critical values (this follows from the Eliasson-Miranda-Zung local normal form and connectedness of the fibers of $\Phi$). Thus there exist global action-angle coordinates on $J^{-1}([J_{\mathrm{min}},x_0[)$ such that $J$ can be taken to be the first action coordinate (cf.\ \vungoc\ \cite[Prop. 2.12]{vu ngoc}). In other words, $f \circ \Phi(\Sigma)$ corresponds to a vertical edge of $\polygon$ and, near $\Sigma$, the second component of $f \circ \Phi$ is the moment map of an effective Hamiltonian $S^1$-action, which can be restricted to $\Sigma$. Hence $\Sigma$ is a symplectic surface with an effective Hamiltonian circle action, which implies that it is a sphere. 
\end{proof}

\begin{remark}\label{rmk:pfsigma}
  The arguments in the proof of Proposition \ref{prop: fixed surf} imply that $f \circ \Phi(\Sigma) \subset \polygon$ is a vertical edge, and its normalized symplectic area is the length of the corresponding vertical edge, just as in the symplectic toric case.
\end{remark}

\begin{remark}\label{rmk:kir}
  The Eliasson-Miranda-Zung local normal form implies that no focus-focus point lies on $J^{-1}(J_{\mathrm{min}})$ or on $J^{-1}(J_{\mathrm{max}})$.
  It is well-known that $J$ is an $S^1$-invariant Morse function, with critical points
  equal to the fixed points of the $S^1$ action. Thus, by the previous argument,
  the Morse index of $J$ at any such point is 2. (Note that the Morse indices of an $S^1$-invariant Morse function are always even.)
  Following Kirwan \cite{kirwan}, this allows to place the following bound on $m_f$,
$$ m_f \leq \mathrm{rk}\,\mathrm{H}^2(M;\Z), $$
\noindent
where the equality holds if and only if 
there are no fixed surfaces at the minimum of maximum of $J$, and no elliptic-elliptic points in $J^{-1}(]J_{\mathrm{min}},J_{\mathrm{max}}[)$.

Moreover, Proposition \ref{prop: fixed surf} implies that $\mathrm{H}^{\mathrm{odd}}(M;\Z) = 0$, as all fixed surfaces are simply connected.
\end{remark}


\subsection{$\Z_k$-spheres: edge set $E$ and its labeling}\label{sec:edge-set-e} 
Recall that the edges in the labeled directed graph $\Gamma$ associated to $\hamiltonian$ correspond to symplectic spheres in $M$ which are stabilized by a finite subgroup $\Z_k \subset S^1$ with $k \geq 2$; these are known as $\Z_k$-spheres. Fix one such symplectic sphere $\Sigma$; the action of $S^1$ on $\Sigma$  has two fixed points (called the poles of the sphere), which are isolated fixed points in $M^{S^1}$ (cf.\ Karshon \cite{karshon}). An isolated fixed point in $M^{S^1}$ is a pole of a $\Z_k$-sphere if and only if one of its isotropy weights equals $k$ in absolute value (cf.\ Remark \ref{rmk:w}). By Lemma \ref{lemma:vertices}, the isolated fixed points in $M^{S^1}$ are either focus-focus or elliptic-elliptic critical points  satisfying property \ref{item:6}. Thus, before trying to `see' $\Z_k$-spheres from $\polygon$, it is necessary to understand how to obtain the isotropy weights of isolated fixed points in $M^{S^1}$ from $\polygon$. This is the aim of the next subsection.

\subsubsection{Isotropy weights from $\polygon$}\label{sec:isotr-weights-focus}
Lemma \ref{lemma:vertices} proves that isolated fixed points in $M^{S^1}$ satisfy either property \ref{item:5}, i.e.\ they are of focus-focus type, or \ref{item:6}, i.e.\ they are of elliptic-elliptic type with an extra condition. Note that the former do not arise when considering symplectic toric manifolds and, as such, need to be dealt differently. To this end, each of the two cases is discussed separately below.\\

Let $p \in M^{S^1}$ be a focus-focus critical point for $\Phi$. Recall that the Eliasson-Miranda-Zung local normal form gives
\begin{itemize}
\item open neighbourhoods $U \subset M$ of $p$, $U_0 \subset \R^4$ of $\mathbf{0}$;
\item a symplectomorphism $\Psi : (U,\omega) \to (U_0,\omega_0)$, where $\omega_0$ is the standard symplectic form on $\R^4$, and a local diffeomorphism $\psi : \R^2 \to \R^2$ satisfying $\Psi(p)=\mathbf{0}$, $\psi(\Phi(p)) = (0,0)$;
\end{itemize}
which make the following diagram commute
$$ \xymatrix{(U,\omega) \ar[r]^-{\Psi} \ar[d]_-{\Phi} & (U_0,\omega_0) \ar[d]^-{\Phi_{\mathrm{ff}}} \\
  \R^2 \ar[r]_-{\psi} & \R^2,} $$
\noindent
where $\Phi_{\mathrm{ff}} =(q_1,q_2)$, $q_1 = x \eta - y \xi$, $q_2 = x\xi + y \eta$, and $\omega_0 = \diff x \wedge \diff \xi + \diff y \wedge \diff \eta$.

\begin{definition}[{\bf Local system preserving actions}]
An $S^1$-action on $U$ is said to be {\em local system preserving} if for all $\lam \in S^1$ and all $ p \in U$, $\Phi(\lam \cdot p)=\Phi(p)$.
\end{definition}

\begin{remark}\label{rmk:lssa}
  Local system preserving actions play an important role in the topological and symplectic classification of completely integrable Hamiltonian systems developed in Zung \cite{zung symplectic}. 
\end{remark}

The following proposition (known to experts and included here for completeness) shows that there exists a {\em unique}  (up to sign) effective local system preserving Hamiltonian $S^1$-action near a focus-focus singular point (cf.\ Zung \cite[Theorem 1.2]{zung_another} for a different proof).

\begin{prop} \label{prop:unique_system_pres_action}
  There exists a unique (up to sign) local system preserving effective Hamiltonian $S^1$-action defined in a neighbourhood of a focus-focus point.
\end{prop}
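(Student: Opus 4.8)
The plan is to work directly in the Eliasson-Miranda-Zung local model $\Phi_{\mathrm{ff}}=(q_1,q_2)$ on $(U_0,\om_0)\subset(\R^4,\om_0)$, where $q_1=x\eta-y\xi$ and $q_2=x\xi+y\eta$. A local system preserving $S^1$-action is, by definition, one whose moment map $\mu$ Poisson-commutes with $q_1$ and $q_2$; since $\Phi_{\mathrm{ff}}$ is a momentum map for an $\R^2$-action whose generic orbits are $2$-dimensional and whose image near $\mathbf 0$ is $2$-dimensional, $\mu$ must be (at least locally, on the dense set of regular points) a function of $(q_1,q_2)$, i.e. $\mu = g\circ\Phi_{\mathrm{ff}}$ for some function $g$ defined near $(0,0)\in\R^2$. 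The Hamiltonian flow of such a $\mu$ is then a reparametrization along the $\R^2$-orbits, so the question becomes: for which germs $g$ is the flow of $X^{g\circ\Phi_{\mathrm{ff}}}$ $2\pi$-periodic, and effective?

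First I would recall the standard normal form of the foliation near a focus-focus fiber: complexifying via $z=x+iy$, $w=\xi+i\eta$ (suitably), the map $\Phi_{\mathrm{ff}}$ becomes $q_1+iq_2 = \bar z w$ (up to a conventional conjugation), the singular fiber $\{q_1=q_2=0\}$ is a pinched torus, and the Hamiltonian vector fields $X^{q_1}$ and $X^{q_2}$ generate a torus-type action away from the singular point. The key classical fact is that $X^{q_2}$ has $2\pi$-periodic flow (its time-$2\pi$ map is the identity — this is the rotational generator $x\partial_x+y\partial_y-\xi\partial_\xi-\eta\partial_\eta$ type vector field, whose flow is a genuine $S^1$-rotation), while the flow of $X^{q_1}$ is non-periodic (it is the "hyperbolic/radial" generator). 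So $\mu_0 := q_2$ already gives one effective local system preserving $S^1$-action, and I would then argue this is the only one up to sign.

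For uniqueness, write $\mu = g(q_1,q_2)$ and compute $X^\mu = \partial_1 g\, X^{q_1} + \partial_2 g\, X^{q_2}$ at points of the singular foliation. The flow of $X^\mu$ for time $t$ moves a regular point, and $2\pi$-periodicity of this flow for all points in a neighbourhood — in particular for points approaching the singular fiber where the holonomy of $X^{q_1}$ around the pinched torus is unbounded (this is precisely the monodromy/logarithmic blow-up phenomenon of focus-focus points, cf. \vungoc\ \cite{vu_ngoc_ff}) — forces $\partial_1 g \equiv 0$, i.e. $g$ depends on $q_2$ only. Then periodicity and effectiveness of the flow of $g(q_2) X^{q_2}$ pins down $g'(0)=\pm1$ and, by an effectiveness/stabilizer argument at the fixed point, $g(q_2)=\pm q_2 + (\text{higher order in }q_2)$; finally, since any two momentum maps generating the same $S^1$-action differ by an additive constant and here both vanish at $p$, and since a further normalization shows the action is conjugate to the linear one, one concludes $\mu = \pm q_2$ up to the choice of action coordinates. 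I would phrase the conclusion as: the action is unique up to sign, where the sign corresponds to reversing the $S^1$.

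The main obstacle I expect is the rigorous justification that $\partial_1 g\equiv 0$ — i.e. that one cannot "absorb" a $q_1$-dependence into a still-periodic flow. The clean way to see this is via the affine monodromy of the focus-focus fibration: near the singular value, action coordinates have a multivalued piece with a logarithmic term, so the period lattice of the $\R^2$-action is generated by $(1,0)$ and $(a\log|c|+b, 2\pi)$-type vectors as one circles the focus-focus value; a linear combination $\partial_1 g\, X^{q_1}+\partial_2 g\, X^{q_2}$ has closed orbits near the singular fiber only if its coefficient vector lies in this lattice uniformly, which (because of the unbounded logarithm) forces the $X^{q_1}$-coefficient to vanish. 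Making this precise requires citing the description of action variables near a focus-focus singularity (\vungoc\ \cite{vu_ngoc_ff}, Zung \cite{zung focus focus}); everything else is a routine computation in the linear model together with the standard fact that an effective Hamiltonian $S^1$-action fixing a point is linearizable.
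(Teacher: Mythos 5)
Your overall strategy is the right one and matches the paper's: reduce to the Eliasson--Miranda--Zung linear model, write the generator of the putative action as a combination of $X^{q_1}$ and $X^{q_2}$ with fiberwise-constant coefficients, and use $2\pi$-periodicity plus effectiveness to kill the non-periodic component and pin the other coefficient to $\pm 1$. However, there are two concrete problems. First, you have swapped the roles of $q_1$ and $q_2$ relative to the definitions you quote: with $q_1=x\eta-y\xi$ and $q_2=x\xi+y\eta$ and $\om_0=\diff x\wedge\diff\xi+\diff y\wedge\diff\eta$, one computes $X^{q_1}=y\partial_x-x\partial_y+\eta\partial_\xi-\xi\partial_\eta$ (the simultaneous rotation, with $2\pi$-periodic flow) and $X^{q_2}=-x\partial_x-y\partial_y+\xi\partial_\xi+\eta\partial_\eta$ (the dilation, whose flow is not periodic). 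The vector field you exhibit as the ``rotational generator'' is in fact the dilation, so the claim that $X^{q_2}$ is $2\pi$-periodic is false; the correct conclusion is $\mu=\pm q_1$, which is what the paper proves.

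Second, and more seriously, the step you yourself flag as the main obstacle --- forcing the coefficient of the non-periodic generator to vanish --- is justified by appeal to the affine monodromy and the logarithmic divergence of action integrals around the pinched torus. That is semi-global data: it requires the whole compact singular fiber, whereas the proposition (and the linear model $U_0\subset\R^4$ it reduces to) is purely local near the focus-focus \emph{point}, where the fibers are non-compact local pieces and no period lattice or holonomy around the pinched torus is available. The gap closes with a much more elementary argument, which is what the paper does: in complex coordinates $w_1=x+i\xi$, $w_2=y+i\eta$ the flow of $X^h=F_1X^{q_1}+F_2X^{q_2}$ (with $F_1,F_2$ basic) is $\varphi^t_h(w_1,w_2)=(e^{(iF_1+F_2)t}w_1,\,e^{(iF_1-F_2)t}w_2)$, and imposing $\varphi^{2\pi}_h=\mathrm{id}$ on an open set immediately gives $F_2\equiv 0$ and $F_1\in\Z$, hence $F_1\equiv\pm 1$ by effectiveness and continuity; this also disposes of the residual ``higher order terms'' you try to rule out via linearization, since $F_1$ is forced to be a constant integer outright. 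So the statement is provable along your general lines, but the key step needs the direct local computation rather than the monodromy of the focus-focus fibration.
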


\begin{proof}
  Using the Eliasson-Miranda-Zung local normal form, it suffices to prove the result for the linear model $(U_0,\omega_0,\Phi_{\mathrm{ff}} =(q_1,q_2))$ described above. Let $h : (\R^4,\om_0) \to \R$ be the momentum map for an effective Hamiltonian $S^1$-action which is system preserving. By definition, for $i =1,2$,
  \begin{equation}
    \label{eq:3}
    \diff q_i (X^h) = 0,
  \end{equation}
  \noindent
  where $X^h$ denotes the Hamiltonian vector field of $h$. Equation \eqref{eq:3} has the following two consequences:
  \begin{enumerate}[label=(\arabic*), ref=(\arabic*)]
  \item \label{item:1} For all $z \in \R^4$, $X^h(z) \in \ker D_z \Phi_0$.
  \item \label{item:2} $\{q_i,h\}_0 = 0$, where $\{\cdot,\cdot \}_0$ is the Poisson bracket induced by $\om_0$ on $U_0$.
  \end{enumerate}
  Since for all $z \in \R^4 \setminus \{\mathbf{0}\}$, $\ker D_z \Phi_0 = \langle X^{q_1}(z),X^{q_2}(z) \rangle$, \ref{item:1} implies that there exist smooth functions $F_1,F_2 : U_0 \setminus \{\mathbf{0}\} \to \R $ such that, for all $ z\in U_0 \setminus \{\mathbf{0}\}$,
  \begin{equation}
    \label{eq:4}
    X^h(z) = F_1(z) X^{q_1}(z) + F_2(z) X^{q_2}(z).
  \end{equation}
  \noindent
  Consequence \ref{item:2} implies that, for $i=1,2$,
  $$ [X^{q_i}, X^h] = 0. $$
  \noindent
  By equation \eqref{eq:4} and $[X^{q_1},X^{q_2}]=0$, for $i=1,2$ and for all $z \in U_0 \setminus \{\mathbf{0}\}$, the following holds
  \begin{align*}
    0 & = [X^{q_i}, X^h](z) = [X^{q_i}, F_1 X^{q_1} + F_2 X^{q_2}](z) \\ 
      & = ((X^{q_i}F_1)(z))X^{q_1}(z) + ((X^{q_i}F_2)(z))X^{q_2}(z).
  \end{align*}
  \noindent
  Therefore, for $i,j=1,2$ and for all $z \in U_0 \setminus \{0\}$, 
  $$ (X^{q_i} F_j)(z) = 0.$$
  \noindent
  The above equation implies that the functions $F_1,F_2$ are \emph{basic}, i.e.\ there exist smooth functions $G_1,G_2 :\R^2 \setminus \{(0,0)\} \to \R$ such that $F_j = \Phi^*_{\mathrm{ff}} G_j$ for $j=1,2$. 

  Consider the flow of $X^h$, which is periodic with period $2\pi$. Since the functions $F_j$ are basic, they are constant along orbits of $X^h$, as they only depend on the values on the image of $\Phi_0$ and the latter is constant on the orbits of $X^h$. Using the fact that $[X^{q_1},X^{q_2}] = 0$, it therefore follows that the flow of $X^h$ is given by
  \begin{equation}
    \label{eq:5}
    \varphi^{t}_h (w_1,w_2) = (e^{(iF_1+F_2)t}w_1,e^{(iF_1-F_2)t}w_2),
  \end{equation}
  \noindent
  where $w_1 = x + i \xi$, $w_2 = y + i \eta$, and $F_1,F_2$ are smooth functions of $z_1,z_2$ (in fact, of $q_1,q_2$). Since $\varphi^{2\pi}_h = \mathrm{id}$, \eqref{eq:5} implies that
  $$ e^{(i F_1 + F_2)2\pi} = 1 \qquad e^{(i F_1 - F_2)2\pi} = 1, $$
  \noindent
  which implies that $F_2 \equiv 0$ and that $2\pi F_1 \in 2\pi \Z$. Since the $S^1$-action is effective, it follows that $ \abs{F_1} \equiv 1$. Thus, up to sign, $h = q_1$ on $U_0 \setminus \{\mathbf{0}\}$; since both functions extend smoothly at $\mathbf{0}$, it follows that, up to sign, $h=q_1$ on $U_0$, which completes the proof.  
\end{proof}

Since $J$ is the moment map of a system preserving effective Hamiltonian $S^1$-action near $p$, it follows that its isotropy weights at $p$ equal those of the origin in $\R^4$ with respect to the Hamiltonian $S^1$-action whose moment map is $q_1$ in the above local normal form. In particular, all focus-focus critical points have the same isotropy weights for the $S^1$-action; these are known to be $\{+1,-1\}$ (cf.\ Zung \cite[Th.\ 1.2]{zung_another}).

\begin{remark}\label{rmk:ffw}
  In order to calculate the isotropy weights of an $S^1$-action at a fixed point, an $S^1$-invariant almost complex structure $\mathcal{J}$ has to be fixed. Observe that the (integrable!) almost complex structure used in the proof of Proposition \ref{prop:unique_system_pres_action} is \textit{not} invariant under the $S^1$-action and, as such, cannot be used to compute the weights.
\end{remark}

Below a different proof of the fact that focus-focus critical points have isotropy weights $\{+1,-1\}$ is given; it uses the close relation between a special class of semi-toric systems and symplectic toric manifolds.

\begin{defin}[{\bf Adaptable semi-toric systems}]
\label{def: extendable}
A semi-toric system $(M,\omega,\Phi)$ is called {\em adaptable} if 
the underlying Hamiltonian $S^1$-space $\hamiltonian$ is extendable (cf.\ Definition \ref{ext}). 
\end{defin}

Fix an adaptable system $\semitoric$ and denote the underlying Hamiltonian $S^1$-space by $\hamiltonian$. Theorem \ref{prop: karshon prop 5.21} implies that for all $x \in \R$, $J^{-1}(x)$ contains at most two isolated critical points in $M^{S^1}$. Let $(M,\omega, \mu =(J, \tilde{H}))$ denote a symplectic toric manifold whose Hamiltonian $\T^2$-action extends the one defined by $J$ (this exists by Theorem \ref{prop: karshon prop 5.21}), and let $\Delta$ be the associated Delzant polygon. With this notation in hand, the following proposition can be proved.

\begin{prop}
\label{prop: weights extendable ff}
The isotropy weights of the $S^1$-action at isolated fixed points in $M^{S^1}$ which are of focus-focus type for $\Phi$ are $\{+1,-1\}$.
\end{prop}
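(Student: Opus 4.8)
The plan is to deduce Proposition \ref{prop: weights extendable ff} from the toric picture, exploiting the fact that the $S^1$-action underlying an adaptable system extends to a Hamiltonian $\T^2$-action. First I would fix a focus-focus critical point $p \in M^{S^1}$ of $\Phi$ and consider the symplectic toric manifold $(M,\omega,\mu = (J,\tilde H))$ given by Theorem \ref{prop: karshon prop 5.21}, together with its Delzant polygon $\Delta$. The point $p$, being an isolated fixed point of the $S^1$-action, corresponds under $\mu$ to a vertex $v_\Delta$ of $\Delta$ which is not incident to a vertical edge (cf.\ Remark \ref{rk:delham} and Proposition \ref{prop: fixed surf}, which rules out fixed surfaces through $p$). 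Since $\Delta$ is Delzant, the two primitive edge vectors $\mathbf{u}_1,\mathbf{u}_2 \in \Z^2$ emanating from $v_\Delta$ form a $\Z$-basis of $\Z^2$; the isotropy weights of the $\T^2$-action at $p$ are then (up to sign) read off from this basis in the standard way.

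The key step is to relate the isotropy weights of the $S^1$-action to those of the $\T^2$-action. Recall that if the $\T^2$-moment map near $p$ is, in suitable coordinates, $(q_1,q_2) = \bigl(\tfrac12|z_1|^2, \tfrac12|z_2|^2\bigr)$ (after applying the linear change of basis sending $\mathbf{u}_1,\mathbf{u}_2$ to the standard basis), then the $S^1 \subset \T^2$ whose moment map is $J$ acts with weights $(m_1,m_2)$ where $(m_1,m_2)$ are the coordinates of the first standard covector expressed against the dual basis — equivalently, $J$ restricted to this chart is $m_1 q_1 + m_2 q_2$ up to an additive constant. On the other hand, by Proposition \ref{prop:unique_system_pres_action} combined with the preceding discussion, $J$ agrees (up to sign) near $p$ with the unique local system preserving action, whose linear model is $q_1 = x\eta - y\xi$. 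The crucial observation is that the Hamiltonian flow of $x\eta - y\xi$ on $\C^2$ is $(w_1,w_2) \mapsto (e^{it}w_1, e^{-it}w_2)$, i.e.\ it rotates the two complex coordinates with opposite speeds; hence in any $S^1$-invariant complex structure compatible with this flow the isotropy weights are $\{+1,-1\}$. Since a compatible metric (hence an invariant almost complex structure) always exists for the Hamiltonian $S^1$-space $\hamiltonian$, and the isotropy weights are independent of this choice, we conclude $\{m_1,m_2\} = \{+1,-1\}$.

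The main obstacle I anticipate is the bookkeeping in the previous paragraph: matching the linear focus-focus model $q_1 = x\eta - y\xi$ with the toric linear model $(\tfrac12|z_1|^2,\tfrac12|z_2|^2)$ requires being careful that the $S^1$-action generated by $q_1$ really does embed in the $\T^2$-action in the stated way, and that the relevant complex structure used to compute weights is the $\T^2$-invariant one rather than the (non-invariant) one appearing in the proof of Proposition \ref{prop:unique_system_pres_action} — a subtlety already flagged in Remark \ref{rmk:ffw}. The cleanest route is probably to argue purely at the level of periodic Hamiltonian flows: the flow of $q_1 = x\eta-y\xi$ is explicitly $(w_1,w_2)\mapsto(e^{it}w_1,e^{-it}w_2)$ by a direct computation, this flow is linear and has eigenvalues $e^{\pm it}$ on $T_p M \otimes \C$, and therefore the isotropy weights of the corresponding $S^1$-action are $\{+1,-1\}$ by definition of isotropy weights; no reference to the toric extension is strictly needed for the weight computation, though the adaptable hypothesis guarantees we are in a setting where the $S^1$-space structure is well-behaved and lets us invoke Theorem \ref{prop: karshon prop 5.21} if a metric-based argument is preferred. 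I would present the proof via this direct flow computation, noting parenthetically that it also follows from the toric picture, and remark that the hypothesis of adaptability is in fact not needed for this particular statement (it is stated here because it is used in this form later and gives an alternative, conceptual proof).
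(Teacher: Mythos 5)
Your overall strategy (reduce to the linear focus-focus model via Proposition \ref{prop:unique_system_pres_action} and then compute in that model) is reasonable, but the step that actually determines the weights has a genuine gap, and it is exactly the subtlety of Remark \ref{rmk:ffw} that you flag and then do not resolve. First, a computational point: for $\omega_0 = \diff x\wedge\diff\xi + \diff y\wedge\diff\eta$ one has $X^{q_1} = y\partial_x - x\partial_y + \eta\partial_\xi - \xi\partial_\eta$, so in the coordinates $w_1 = x+i\xi$, $w_2 = y+i\eta$ the flow is $\dot w_1 = w_2$, $\dot w_2 = -w_1$, not $(e^{it}w_1,e^{-it}w_2)$; it only diagonalizes in the coordinates $w_1\pm iw_2$. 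More seriously, the inference ``eigenvalues $e^{\pm it}$ on $T_pM\otimes\C$, hence weights $\{+1,-1\}$'' is not valid: the complexification of the underlying real representation has eigenvalues $e^{it},e^{it},e^{-it},e^{-it}$ whether the weights are $(1,1)$ or $(1,-1)$, because $\C_{(1)}$ and $\C_{(-1)}$ are isomorphic as real $S^1$-representations. There do exist $S^1$-invariant complex structures on $(\R^4, X^{q_1})$ realizing the weights $(1,1)$ (for instance the one determined by $x+iy$, $\xi+i\eta$); what excludes them is that they are not compatible with $\omega_0$. Since distinguishing $\{+1,-1\}$ from $\{+1,+1\}$ is the entire content of the proposition, you must exhibit an invariant \emph{and} $\omega_0$-compatible almost complex structure and compute with respect to it (the one given by $(w_1,w_2)$ above works, and yields weights $+1$ and $-1$ on $w_1\mp iw_2$); the phrase ``compatible with this flow'' does not supply this. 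Your toric route is also left incomplete: it reduces the claim to showing that the primitive edge vectors of $\Delta$ at $v_\Delta$ have first components $\pm 1$, which does not follow from the Delzant condition alone.

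By contrast, the paper sidesteps the local linear algebra entirely. After the same reduction to a single adaptable example with one focus-focus point (via Proposition \ref{prop:unique_system_pres_action}), it computes the jump of $\rho_J'$ at the focus-focus level twice using Theorem \ref{th: vu ngoc th 5.3}: once for the semi-toric system, where the jump is $-j_x=-1$, and once for the toric extension $(M,\omega,(J,\tilde H))$, where it is $1/(a^+b^+)$. Equating the two gives $a^+b^+=-1$, hence $\{a^+,b^+\}=\{+1,-1\}$; this is where adaptability is genuinely used. Your argument can be repaired along the lines indicated above, but as written the decisive sign determination is not justified.
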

\begin{proof}
Any isolated fixed point in $M^{S^1}$ of focus-focus type for $\Phi$ corresponds to a vertex of $\Delta$ (see Figure \ref{fD} below).

\begin{figure}[h] 
  \begin{center} 
    
    \input{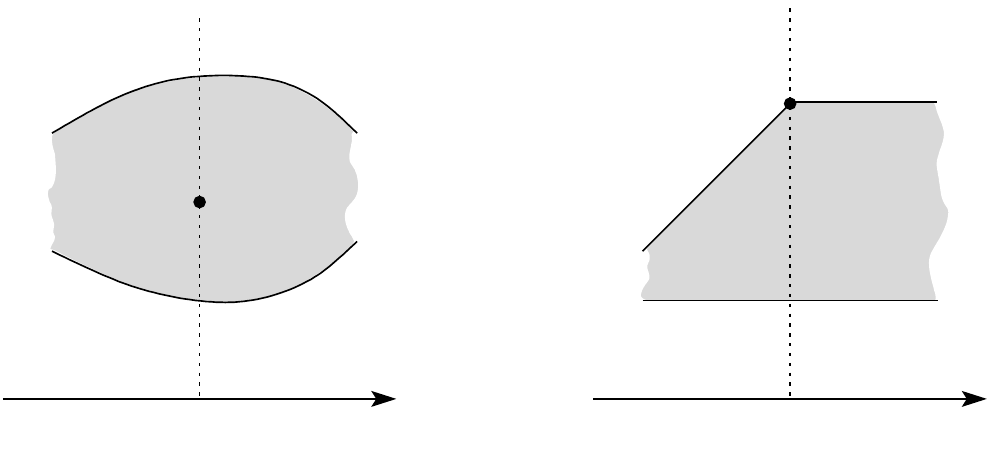_t}
    \caption{The image of a focus-focus point, and the corresponding image in the toric extension.}
    \label{fD} 
    
  \end{center}
\end{figure}

In light of Proposition \ref{prop:unique_system_pres_action} and the subsequent discussion, it suffices to consider an adaptable semi-toric system which has only one focus-focus point, e.g.\ the system considered in Sadovski{\'{\i}} $\&$ Z{\^h}ilinski{\'{\i}} \cite{sad_zhi} or that one constructed in Example \ref{exm:ff}. Fix such a system and let $x \in \R$ be such that $J^{-1}(x)$ contains the only focus-focus point. The Eliasson-Miranda-Zung local normal form implies that $\Jmin<x<\Jmax$, where, as above, $J_{\mathrm{min}}$ (respectively $J_{\mathrm{max}}$) denote the minimum (respectively maximum) value of $J$. Let $\rho$ denote the Duistermaat-Heckman function associated to the $S^1$-action (cf.\ Definition \ref{defin:dhm}). Theorem \ref{th: vu ngoc th 5.3} gives that
\begin{equation}\label{DH}
\rho_J'(x+0)-\rho_J'(x-0)=-1.
\end{equation}
\noindent
Applying Theorem \ref{th: vu ngoc th 5.3} to the symplectic toric manifold $(M,\omega,\mu=(J,\tilde{H}))$, obtain that 
\begin{equation}
  \label{eq:6}
  \rho_J'(x+0)-\rho_J'(x-0)=-e^+(x),
\end{equation}
\noindent
where $e^{+}(x) = \frac{1}{a^+b^+}$, and $a^+,b^+$ are the isotropy weights at $p$ of the $S^1$-action defined by $J$ (note that $p$ is an elliptic-elliptic point for $(M,\omega,\mu=(J,\tilde{H}))$). Observe that the left hand sides of \eqref{DH} and \eqref{eq:6} are equal, as they depend on the $S^1$-action defined by $J$. Therefore, $a^+b^+=-1$, which implies that $\{a^+,b^+\} = \{+1,-1\}$ as required.
\end{proof}

Having found that the isotropy weights at focus-focus critical points are equal to 1 in absolute value, the following corollary is immediate. 

\begin{corollary}\label{cor:zknoff}
  Let $\semitoric$ be a semi-toric system. The poles of a $\Z_k$-sphere of the underlying $S^1$-action are necessarily elliptic-elliptic critical points satisfying condition \ref{item:6}.
\end{corollary}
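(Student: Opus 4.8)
The plan is to derive Corollary \ref{cor:zknoff} as an immediate consequence of the results just established, with essentially no new work. Recall that by Lemma \ref{lemma:vertices}, every isolated fixed point in $M^{S^1}$ is either a focus-focus critical point of $\Phi$ (case \ref{item:5}) or an elliptic-elliptic critical point whose image in $\polygon$ is not a vertex of a vertical edge (case \ref{item:6}). By Lemma \ref{lemma: Zk spheres} (Karshon), the two poles of any $\Z_k$-sphere are isolated fixed points in $M^{S^1}$, and by Remark \ref{rmk:w} a pole of a $\Z_k$-sphere must have an isotropy weight equal to $k$ in absolute value, with $k \geq 2$.

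First I would invoke Proposition \ref{prop: weights extendable ff} (equivalently Proposition \ref{prop:unique_system_pres_action} together with the known value of the weights at a focus-focus point): the isotropy weights of the $S^1$-action at any focus-focus critical point are $\{+1,-1\}$. In particular both weights have absolute value $1$, so a focus-focus point cannot carry a weight of absolute value $k \geq 2$. Hence no pole of a $\Z_k$-sphere can be of focus-focus type.

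Combining the two observations: a pole of a $\Z_k$-sphere is an isolated fixed point in $M^{S^1}$ (Lemma \ref{lemma: Zk spheres}), it is not focus-focus (Proposition \ref{prop: weights extendable ff}), and therefore by the dichotomy in Lemma \ref{lemma:vertices} it must be an elliptic-elliptic critical point of $\Phi$ satisfying condition \ref{item:6}. This is exactly the assertion of the corollary, so the proof is complete in a couple of lines.

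There is no real obstacle here; the only point requiring a moment's care is making sure the chain of citations is legitimate — in particular that Proposition \ref{prop: weights extendable ff} applies to \emph{every} focus-focus point of \emph{every} semi-toric system, not merely to adaptable ones. This is justified by the remark following Proposition \ref{prop:unique_system_pres_action} (or by Proposition \ref{prop:unique_system_pres_action} itself, since the isotropy weights at a focus-focus point are computed in a fixed local model and so are the same $\{+1,-1\}$ for any semi-toric system, whether or not it is adaptable). Once that is noted, the corollary follows verbatim from Lemma \ref{lemma:vertices} and Lemma \ref{lemma: Zk spheres}.
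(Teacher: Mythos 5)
Your proof is correct and follows exactly the route the paper intends: the poles of a $\Z_k$-sphere ($k\geq 2$) carry an isotropy weight of absolute value $k$, whereas all focus-focus points have weights $\{+1,-1\}$ (by Proposition \ref{prop:unique_system_pres_action} together with the local model, so indeed for arbitrary, not just adaptable, semi-toric systems), and the dichotomy of Lemma \ref{lemma:vertices} then forces the poles to be elliptic-elliptic points satisfying condition \ref{item:6}. The paper states the corollary as immediate from precisely this chain, so no further comment is needed.
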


Corollary \ref{cor:zknoff} begs the question of whether $\polygon$ can be used to calculate the isotropy weights of isolated fixed points in $M^{S^1}$ which are elliptic-elliptic critical points for $\Phi$. Recall that such points are mapped to either Delzant or hidden Delzant vertices of $\polygon$ by $f \circ \Phi$ (cf.\ Remark \ref{rmk:pv}).

\begin{remark}\label{rmk:wtc}
  Given $(M,\omega,\mu = (\mu_1,\mu_2))$ is a symplectic toric manifold, its Delzant polygon $\Delta$ can be used to calculate the isotropy weights of the isolated fixed points of the Hamiltonian $S^1$-action of the associated Hamiltonian $S^1$-space $(M,\omega,\mu_1)$ as follows. By Remark \ref{rk:delham}, such a point maps to a vertex $v_{\Delta}$ of $\Delta$ not incident to a vertical edge. Let $\mathbf{u},\mathbf{w}$ be the primitive integral tangent vectors to the edges incident to $v_{\Delta}$ which
  come out of it. Then the isotropy weights of the chosen $S^1$ action at the corresponding isolated fixed point are given by taking the first coordinates of $\mathbf{u},\mathbf{w}$.
\end{remark}

Let $v$ be a Delzant or hidden Delzant vertex of $\polygon$ satisfying condition \ref{item:6} and choose primitive integral tangent vectors $\mathbf{u}, \mathbf{w}$ to the edges incident to $v$ as in Remark \ref{rmk:wtc}. The next proposition proves that the isotropy weights of the corresponding isolated fixed point in $M^{S^1}$ can be calculated as in the case of symplectic toric manifolds described by Remark \ref{rmk:wtc} above.

\begin{proposition}
\label{prop: elliptic-elliptic weights}
With the notation as above, the isotropy weights of the isolated fixed point corresponding to $v$ are given by the first coordinates of $\mathbf{u}$ and $\mathbf{w}$.
\end{proposition}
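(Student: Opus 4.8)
The plan is to reduce the statement to the two sub-cases singled out in Lemma \ref{lemma:df} and then compute the isotropy weights directly from the Eliasson-Miranda-Zung local normal form near the corresponding elliptic-elliptic point. First I would recall that, by Remark \ref{rmk:pv}, the preimage $(f\circ\Phi)^{-1}(v)$ of a Delzant or hidden Delzant vertex is a single elliptic-elliptic point $p$, so near $p$ there are Darboux coordinates $(x,y,\xi,\eta)$ and a local diffeomorphism $\psi$ of $\R^2$ such that $\psi\circ\Phi = (q_1,q_2)$ with $q_1 = \tfrac12(x^2+\xi^2)$, $q_2 = \tfrac12(y^2+\eta^2)$. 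In these coordinates $S^1$ acts by $\lambda\cdot(x+i\xi, y+i\eta) = (\lambda^{m_1}(x+i\xi), \lambda^{m_2}(y+i\eta))$ for the isotropy weights $(m_1,m_2)$, and $J$ is a linear combination of $q_1$ and $q_2$; the coefficients of that linear combination are exactly $m_1$ and $m_2$ (this is the content of Lemma \ref{lemma: S^1 action normal form} applied to $J = m_1 q_1 + m_2 q_2 + \mathrm{const}$, and it is the statement that $(m_1, m_2)$ are the weights).

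The key geometric input is that the image $\psi\circ\Phi(U)$ near $p$ is a quadrant with vertex at $\psi(\Phi(p))$, whose two edges have primitive integral tangent directions whose $x$-components (i.e.\ the $J$-direction components, since $x = J$ up to affine normalization) record $m_1$ and $m_2$; more precisely, the edge along which $q_2 \equiv 0$ is swept out by increasing $q_1$, so its tangent direction pushed forward to $B$ via the action coordinates is the direction in which $J$ changes at rate $m_1$, and symmetrically for the other edge. Thus I would argue: (i) if $v$ is Delzant with no cut into it, then $f$ is integral affine near $f^{-1}(v)$, so the primitive tangent vectors $\mathbf{u}, \mathbf{w}$ to the edges of $\polygon$ at $v$ are the $f$-images of those quadrant edges, and their first coordinates are precisely $m_1, m_2$ — this is essentially Remark \ref{rmk:wtc} applied verbatim, since locally the system is toric by the proof of Lemma \ref{lemma:df}; (ii) if $v$ is hidden Delzant, then pass to the polygon $\mathcal{P}_{\boldsymbol{\ep}'}$ obtained by flipping the cuts into $f^{-1}(v)$, for which $v$ becomes Delzant, apply case (i) there to get weights $(m_1, m_2)$ equal to the first coordinates of $\mathbf{u}, A_v\mathbf{w}$, and then observe that $A_v$ — being lower triangular with $1$'s on the diagonal — does not change first coordinates, so the first coordinate of $A_v\mathbf{w}$ equals that of $\mathbf{w}$. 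Since the isotropy weights are an invariant of the $S^1$-space and do not depend on the choice of cuts, this yields the claim for the original polygon $\polygon$.

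The main obstacle I anticipate is bookkeeping the signs and orientations correctly: one must make sure that the convention "primitive tangent with positive first component, $\mathbf{u}$ on the left, $\mathbf{w}$ on the right" (set up just before Lemma \ref{lemma:df}) matches the orientation in which $q_1$ and $q_2$ increase in the local model, and that $f$ being orientation preserving (Remark \ref{rmk:tau}) guarantees the edge images are not flipped. A secondary technical point is justifying that $x = J$ is genuinely an integral affine coordinate compatible with both the local normal form near $p$ and the action coordinates on $B_{\mathrm{reg}}$ up to the global AGL$(2;\Z)$ ambiguity — but this is exactly why Theorem \ref{th: vu ngoc th 3.8}\ref{item:9} insists $f(x,y) = (x, f^{(2)}(x,y))$, so the first-coordinate computation is unambiguous. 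Once these conventions are pinned down, the computation is the standard toric one, and the hidden Delzant case costs only the observation that the lower-triangular matrix $A_v$ preserves first coordinates.
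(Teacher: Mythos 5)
Your proposal is correct and follows essentially the same route as the paper's proof: the Delzant case is handled by noting that $f\circ\Phi$ is locally a toric moment map near $f^{-1}(v)$ so that Remark \ref{rmk:wtc} applies, and the hidden Delzant case is reduced to the Delzant one by flipping the cuts into $f^{-1}(v)$ and observing that the unipotent lower-triangular matrix $A_v$ does not change first coordinates. The extra detail you supply on the Eliasson--Miranda--Zung normal form and on sign conventions is a legitimate expansion of what the paper leaves implicit, not a different argument.
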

\begin{proof}
  If $v$ is a Delzant vertex, then locally $f \circ \Phi$ defines a Hamiltonian $\mathbb{T}^2$-action which extends the $S^1$-action whose moment map is $J$. Thus in this case the result follows, as the action is locally toric and the observations made in Remark \ref{rmk:wtc} hold. On the other hand, if $v$ is hidden Delzant for $\polygon$, then there exists a different choice of cuts $\boldsymbol{\ep}'$ such that $v$ is a Delzant vertex for $\mathcal{P}_{\boldsymbol{\ep}'}$ ($\boldsymbol{\ep}'$ agrees with $\boldsymbol{\ep}$ except that there are no cuts going into $f^{-1}(v)$, cf.\ proof of Lemma \ref{lemma:df}). If $\mathbf{u}, \mathbf{w}$ are the primitive integral tangent vectors to the edges of $\polygon$ chosen as in Remark \ref{rmk:wtc}, then $\mathbf{u}, A_v \mathbf{w}$ are the corresponding ones for $\mathcal{P}_{\boldsymbol{\ep}'}$, where $ A_v = \left(
  \begin{smallmatrix}
  1 & 0 \\ \ep_v n_v & 1
  \end{smallmatrix}
  \right)$,
  $\ep_v, n_v$ being the sign and degree of $v$ (cf.\ proof of Lemma \ref{lemma:df} and \vungoc\ \cite[proof of Prop.\ 4.1]{vu ngoc}). Since the first coordinate of $\mathbf{w}$ agrees with that of $A_v\mathbf{w}$, the result follows from the Delzant case.
\end{proof}


\subsubsection{$\Z_k$-spheres}\label{sec:z_k-spheres} Corollary \ref{cor:zknoff} and Proposition \ref{prop: elliptic-elliptic weights} allow to find poles of $\Z_k$-spheres; what this section is concerned with is to show that, in analogy with the case of symplectic toric manifolds (cf.\ Remark \ref{rk:delham}), these are mapped to the boundary of $\polygon$ under $f \circ \Phi$. Recall that $B = \Phi(M)$ is a curved polygon with curved edges (cf.\ Section \ref{sec:st}).

\begin{proposition}\label{prop:ced}
Let $\Sigma \subset M$ be a $\Z_k$-sphere for $k \geq 2$. Then $\Phi(\Sigma)$ is a curved edge of $B$. 
\end{proposition}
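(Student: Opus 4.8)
The plan is to analyze the restriction of $\Phi$ to the $\Z_k$-sphere $\Sigma$ using the structure theory already available: the two poles of $\Sigma$ are isolated fixed points in $M^{S^1}$, and by Corollary \ref{cor:zknoff} they must both be elliptic-elliptic critical points of $\Phi$ satisfying condition \ref{item:6}. The key observation is that $J$ is constant on no part of $\Sigma$ except at the poles (the $S^1$-action on $\Sigma$ has exactly two fixed points, so $J|_\Sigma$ is a perfect Morse function on $S^2$), hence $\Phi(\Sigma)$ is a curve in $B$ joining the two images $f\circ\Phi$ of the poles, which are two vertices of $\polygon$ lying strictly between the vertical lines $x = J_{\mathrm{min}}$ and $x = J_{\mathrm{max}}$ (by Proposition \ref{prop: fixed surf}, $\Sigma$ cannot contain a point of $J^{-1}(J_{\mathrm{min}})$ or $J^{-1}(J_{\mathrm{max}})$ since those level sets are fixed surfaces, while the poles of a $\Z_k$-sphere are isolated).

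First I would show that $\Phi(\Sigma)$ lies entirely in the boundary $\partial B$. The cleanest route is local: near a pole $p$ of $\Sigma$, the Eliasson--Miranda--Zung normal form for the elliptic-elliptic point gives coordinates in which $\Phi_{\mathrm{ee}} = (q_1,q_2)$ with $q_i = \frac12(\cdot^2 + \cdot^2)$, so that $\Phi$ maps a neighbourhood of $p$ onto a neighbourhood of a corner of $B$, and the two local pieces of $\partial B$ correspond precisely to the two coordinate $2$-planes $\{$one of the $q_i$ vanishes$\}$, which are exactly the two $S^1$-invariant symplectic surfaces through $p$ on which $\Phi$ has rank $1$. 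The $\Z_k$-sphere $\Sigma$, being stabilized by $\Z_k \subset S^1$, must coincide near $p$ with one of these coordinate planes (the one corresponding to the weight of absolute value $k$), hence $\Phi(\Sigma)$ is contained in $\partial B$ near each pole. Away from the poles $\Sigma \setminus \{$poles$\}$ consists of points with stabilizer exactly $\Z_k$, so by the elliptic-regular normal form every such point has rank $1$ and its $\Phi$-image lies on a curved edge of $B$; thus $\Phi(\Sigma) \subset \partial B$ globally.

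Next I would identify which part of $\partial B$ is covered. Since $\Phi(\Sigma)$ is a connected subset of $\partial B$ containing the two pole-images and contained in the $1$-skeleton, and since it cannot pass through any vertex of $B$ other than the two poles (a vertex of $B$ is an elliptic-elliptic value, whose fibre is a single point, and that point would have to lie on $\Sigma$ — but the only fixed points of $S^1$ on $\Sigma$ are the two poles, so the only elliptic-elliptic values $\Phi(\Sigma)$ can meet are the pole-images), $\Phi(\Sigma)$ is exactly the union of curved edges between the two pole-images, and in fact a single curved edge once one checks (via connectedness of fibres, \vungoc\ \cite{vu ngoc}, Theorem 3.4, together with the rank-$1$ analysis above) that no two poles of a $\Z_k$-sphere can be separated by an intermediate elliptic-elliptic vertex — equivalently that $J|_\Sigma$ has exactly two critical points so the image is a single arc of $\partial B$ with no interior vertex. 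Hence $\Phi(\Sigma)$ is a curved edge of $B$.

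The main obstacle I anticipate is the last point: ruling out that $\Phi(\Sigma)$ is a \emph{union} of several consecutive curved edges rather than a single one, i.e.\ showing there is no vertex of $B$ in the interior of $\Phi(\Sigma)$. This requires knowing that $\Sigma$ — topologically a $2$-sphere with a Hamiltonian $S^1$-action having exactly two fixed points — meets $\Phi^{-1}(\text{vertex of }B)$ only at its poles, which follows because such an intermediate vertex would be a third fixed point of the $S^1$-action restricted to $\Sigma$ lying on $\Sigma$, contradicting that the $S^1$-action on a $\Z_k$-sphere has precisely two fixed points (Karshon \cite{karshon}, Lemma 2.2). Once this is in place the statement follows. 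I would also take care to phrase the local normal form argument so that it applies uniformly whether the pole-image is a Delzant or a hidden Delzant vertex of $\polygon$, using that $f$ in Theorem \ref{th: vu ngoc th 3.8} preserves the boundary structure of $B$ and maps curved edges of $B$ to edges of $\polygon$.
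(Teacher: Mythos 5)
Your overall architecture is sound, and your handling of the second step --- ruling out an intermediate vertex of $B$ in the interior of $\Phi(\Sigma)$ by noting that such a vertex has a one-point fibre (an elliptic-elliptic point, hence $S^1$-fixed) which would be a third fixed point on $\Sigma$, contradicting Lemma \ref{lemma: Zk spheres} --- is correct and in fact more direct than the paper's argument, which instead shows that the full preimage $\Phi^{-1}(e)$ of the curved edge through a non-vertex point of $\Phi(\Sigma)$ is itself a $\Z_k$-sphere meeting $\Sigma$ away from the poles, and concludes $\Sigma=\Phi^{-1}(e)$.

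The genuine gap is in the first step, the inclusion $\Phi(\Sigma)\subset\partial B$. For points of $\Sigma$ away from the poles you assert that they ``have stabilizer exactly $\Z_k$, so by the elliptic-regular normal form every such point has rank $1$.'' This is circular: the elliptic-regular normal form can only be invoked once you know the point is a rank-$1$ critical point of $\Phi$, and proving that a point with stabilizer $\Z_k$, $k\geq 2$, must be critical for $\Phi$ is precisely the content of this half of the proposition. Concretely, for $x\in\Sigma$ with $s=\Phi(x)\in\mathring{B}$ two cases must be excluded: (a) $s$ is a regular value, where freeness of the $S^1$-action on $\Phi^{-1}(s)$ follows from the Liouville--Arnol'd theorem ($J$ is an action coordinate, so the local $\T^2$-action on the regular fibre is free); and (b) $s$ is a focus-focus value with $x$ a \emph{regular} point of the singular fibre --- this case is entirely absent from your write-up, and ruling it out requires Proposition \ref{prop:unique_system_pres_action}: the system-preserving effective $S^1$-action near a focus-focus point is, up to sign, the one generated by $q_1$, whose flow is free away from the fixed point, and the stabilizer is constant along the $\R^2$-orbit through $x$, which enters the Eliasson--Miranda--Zung neighbourhood. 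Your normal-form analysis at the poles only controls $\Phi(\Sigma)$ near the two pole-images and does not substitute for (a) and (b); once these are supplied, the rest of your argument goes through.
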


\begin{proof}
  Let $p,q \in \Sigma$ be fixed by the $S^1$-action. By the local normal form of Lemma \ref{lemma: S^1 action normal form}, $p$ and $q$ can be chosen so that one of the isotropy weights of $p$ (respectively $q$) is $k$ (respectively $-k$). Corollary \ref{cor:zknoff} implies that $\Phi(p), \Phi(q)$ are vertices of $B$. Suppose that $ \mathring{B} \cap \Phi(\Sigma)\neq \emptyset$ and consider $s \in \mathring{B} \cap \Phi(\Sigma)$. Suppose that $s$ is a focus-focus critical value, then $\Sigma \cap \Phi^{-1}(s)$ does not contain focus-focus critical points and, therefore, consists only of regular points. However, the $S^1$-action whose moment map is $J$ is free on regular points lying on $\Phi^{-1}(s)$ by the Eliasson-Miranda-Zung local normal form and Proposition \ref{prop:unique_system_pres_action}. This leads to a contradiction, as points on $\Sigma$ are stabilized by $\Z_k \subset S^1$. Therefore, $s$ is not a focus-focus critical value, which implies that $s$ is regular. The Liouville-Arnol'd theorem implies that, locally near $\Phi^{-1}(s)$ there exists a free Hamiltonian $\T^2$-action extending the one defined by $J$, which implies that the stabilizer of points in $\Phi^{-1}(s)$ (with respect to the $S^1$-action) is trivial. Again, this is a contradiction, which implies that $\mathring{B} \cap \Phi(\Sigma) = \emptyset$. Thus $\Phi(\Sigma) \subset \partial B$ and $\Sigma$ consists of elliptic-elliptic and elliptic-regular critical points for $\Phi$. \\

Suppose that $s' \in \Phi(\Sigma)$ is not a vertex of $B$ and let $e$ denote the curved edge containing $s'$. Since focus-focus critical values are isolated, there exists an open neighbourhood $ W \subset B$ of $e$ such that the Hamiltonian action defined by $\Phi$ on $(\Phi^{-1}(W),\omega)$ descends to a Hamiltonian $\T^2$-action, i.e.\ there exists a diffeomorphism $\bar{f}: W \to F(W) \subset \R^2$ onto its image such that $\bar{f} \circ \Phi :(\Phi^{-1}(W),\omega) \to \R^2$ is the moment map of a Hamiltonian $\T^2$-action. Choose $\bar{f}(x,y) = (x,\bar{f}^{(2)}(x,y))$, i.e.\ fix the Hamiltonian vector field of $J|_{\Phi^{-1}(W)}$ to be an infinitesimal generator of the Hamiltonian $\T^2$-action. The Eliasson-Miranda-Zung local normal form implies that $\bar{f}(e)$ is a straight line with integral tangent vector (cf.\ Remark \ref{rmk:bound}). Given the above choices, it follows that a primitive tangent vector $\mathbf{u}$ for $\bar{f}(e)$ is of the form $(k,b)$, for some $b \neq 0$ (cf.\ Remark \ref{rmk:wtc}). It is standard to check that $(\bar{f} \circ \Phi)^{-1}(\bar{f}(e))$ is a $\Z_k$-sphere (cf.\ Karshon \cite{karshon}); since $((\bar{f} \circ \Phi)^{-1}(\bar{f}(e))) \cap \Sigma$ is not empty and contains a point that is not a pole of $\Sigma$, it follows that $\Sigma = (\bar{f} \circ \Phi)^{-1}(\bar{f}(e)) = \Phi^{-1}(e)$, which completes the proof. 
\end{proof}

Unlike the case of symplectic toric manifolds, it is not necessarily true that a $\Z_k$-sphere $\Sigma$ of the underlying Hamiltonian $S^1$-space $\hamiltonian$ of a semi-toric system $\semitoric$ is the preimage of an edge in $\polygon$. This is because some of the cuts may break the curved edge in $B$ whose preimage under $\Phi$ equals $\Sigma$, thereby introducing fake vertices (cf.\ Definition \ref{defn:vertices}). This is illustrated by the following example.

\begin{exm}\label{exm:ff}
Following Pelayo $\&$ \vungoc\ \cite{pelayo_vu_ngoc_con}, the polygons shown in Figure \ref{polytope1} are two semi-toric polygons associated to a semi-toric system $\semitoric$, where $m_f = 1$ and the Taylor series invariant associated to the focus-focus critical point is taken to be 0 (cf.\ Pelayo $\&$ \vungoc\ \cite{pelayo-vu ngoc inventiones,vu_ngoc_ff} for details). The top edge, going from $(0,0)$ to $(2,1)$, of the polygon in Figure \ref{polytope1} (b) corresponds to a $\Z_2$-sphere; however, the same $\Z_2$-sphere is the preimage of the union of the top edges of the polygon in Figure \ref{polytope1} (a).

\begin{figure}[h] 
  \begin{center} 
    
    \input{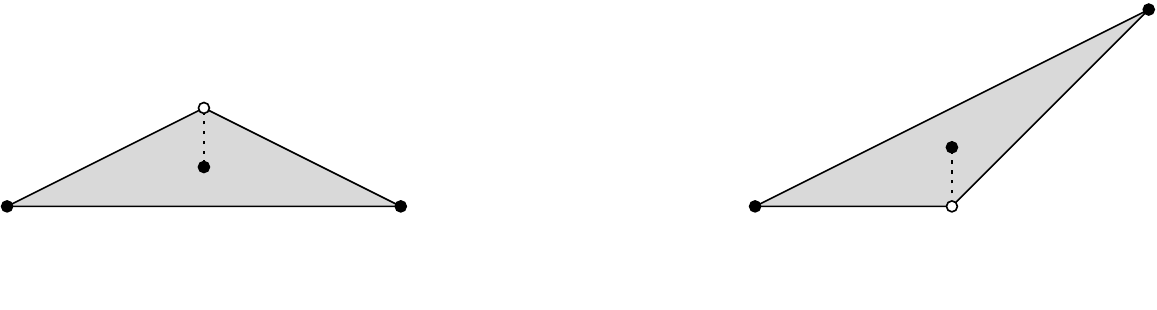_t}
    \caption{}  
    \label{polytope1} 
    
  \end{center}
\end{figure}
\end{exm}

In general, a $\Z_k$-sphere is the preimage of a \textit{chain} of consecutive edges $e_1,\ldots,e_N$ joining two vertices $v, v'$ in $\polygon$, whose `initial' (respectively `final') vertex $v \in e_1$ (respectively $v' \in e_N$) is Delzant or hidden Delzant, has one of its isotropy weights equals to $k$ (respectively $-k$), and whose other vertices are all fake. Note that the isotropy weights at $v$ and $v'$ can be calculated using Proposition \ref{prop: elliptic-elliptic weights}. The adjectives `initial' and `final' refer to direction of increasing first coordinate, which corresponds to the flow of the negative gradient of $J$ with respect to a compatible metric (cf.\ Remark \ref{rmk:grad}).


\subsection{Proof of Theorem \ref{thm: main}}\label{sec:proof-theor-refthm}
Sections \ref{sec:vertex-set-v} and \ref{sec:edge-set-e} allow to describe an algorithm to construct the labeled directed graph $\Gamma$ of the Hamiltonian $S^1$-space $\hamiltonian$ underlying a semi-toric system $\semitoric$ from $\polygon$ and $m_f$. This is explained in the proof of the main theorem below.

\begin{proof}[Proof of Theorem \ref{thm: main}]
  As in Remark \ref{rk:delham}, all that is needed is how to construct the vertices and the edges of $\Gamma$ and their labeling.
\begin{longtable}{l p{90mm}}
{\bf Vertex set V:} & Each Delzant or hidden Delzant vertex $v_{\polygon}$ of $\polygon$ satisfying property \ref{item:6} corresponds to a vertex $v_{\Gamma}$ of $\Gamma$; moreover, there are another $m_f$ vertices of $\Gamma$ each corresponding to one focus-focus critical point of $\Phi$ (cf.\ Lemma \ref{lemma:vertices}). A vertical edge $e^{\mathrm{vert}}_{\polygon}$ of $\polygon$ give rise to a fat vertex $v^{\mathrm{fat}}_{\Gamma}$ of $\Gamma$ (cf.\ Remark \ref{rmk:pfsigma}).\\
{\bf Labeling of V:} & Each vertex $v_{\Gamma}$ (respectively $v^{\mathrm{fat}}_{\Gamma}$) in $V$ is labeled with the value of the first coordinate of the corresponding vertex $v_{\polygon}$ (respectively of the corresponding vertical edge $e^{\mathrm{vert}}_{\polygon}$) in $\polygon$. Fat vertices are also labeled with 0 for the genus of the corresponding fixed surfaces (cf.\ Proposition \ref{prop: fixed surf}) and with the length of the corresponding vertical edge of $\polygon$ for its normalised symplectic area (cf.\ Remark \ref{rmk:pfsigma}). \\
{\bf Edge set E:} & Suppose that $v_{\Gamma} \in V$ has an isotropy weight equal to $k \geq 2$ (note that Corollary \ref{cor:zknoff} implies that $v_{\Gamma}$ does not correspond to a focus-focus critical point). By Proposition \ref{prop: elliptic-elliptic weights} this happens if and only if the corresponding vertex of $v_{\polygon}$ of $\polygon$ has an `outgoing' edge $e_1$ (in the direction of increasing $J$) whose primitive tangent vector is of the form $(k,b)$, for some $b \in \Z$. Construct a chain $C$ of consecutive edges $e_1, e_2, \ldots, e_N$ by moving along $e_1$ in the direction of increasing $J$ until a Delzant or hidden Delzant vertex $v'_{\polygon}$ is reached (this process need terminate). Let $v'_{\Gamma} \in V$ denote the corresponding vertex. Note that by Proposition \ref{prop:ced}, $(f \circ \Phi)^{-1}(C)$ is a $\Z_k$-sphere; thus join $v_{\Gamma}$  to $v_{\Gamma}'$ with an edge $e_{\Gamma}$. \\
{\bf Labeling of E:} & Each edge $e_{\Gamma}$ of $\Gamma$ is labeled with the integer $k \geq 2$, where $(k,b) \in \Z^2$ is a primitive tangent vector to the edge $e_1$ in the corresponding chain $C$ of edges $e_1,\ldots, e_N$ of $\polygon$.
\end{longtable}
\end{proof}

\begin{exm}\label{exm:sssss}
  The semi-toric system whose associated semi-toric polygons are shown in Figure \ref{polytope1} is defined on $(\mathbb{C}\mathrm{P}^2,\omega_{FS})$, where $\omega_{FS}$ is the standard Fubini-Study symplectic form. In fact, the underlying Hamiltonian $S^1$-space is described by Example \ref{exm:cp2}.
\end{exm}


\section{Adaptable and non-adaptable semi-toric systems}\label{sec:adapt-non-adapt}
As remarked above and in the literature, semi-toric systems share many properties with symplectic toric manifolds (cf.\ Remark \ref{rmk:sttoric}, the proof of Theorem \ref{thm: main}, and Pelayo $\&$ \vungoc\ \cite{pelayo-vu ngoc inventiones,pelayo_vu_ngoc_con}, \vungoc\ \cite{vu ngoc}). In light of the classification of symplectic toric manifolds carried out in Delzant \cite{delzant}, it is natural to ask whether semi-toric systems admit a semi-toric polygon which is Delzant in the sense of Definition \ref{defin:delzant_polytopes}. Note that property \ref{item:11} of Theorem \ref{th: vu ngoc th 3.8} implies that a semi-toric polygon $\polygon$ may fail to be Delzant if some vertices are not smooth (cf.\ Definition \ref{defin:delzant_polytopes}).

Recall that $\semitoric$ is adaptable if and only if its underlying Hamiltonian $S^1$-space $\hamiltonian$ is extendable, which in turn means that the $S^1$-action can be extended to an effective Hamiltonian $\mathbb{T}^2$-action on $(M,\omega)$ (cf.\ Definitions \ref{ext} and \ref{def: extendable}, and Theorem \ref{prop: karshon prop 5.21}). 
The aim of this section is to prove the following result.

\begin{theorem}\label{smooth}
A semi-toric system $\semitoric$ admits a Delzant semi-toric polygon $\polygon$ if and only if $\semitoric$ is adaptable.
\end{theorem}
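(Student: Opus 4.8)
The plan is to prove the two implications separately, and to deduce the ``hard'' direction (Delzant polygon exists $\Rightarrow$ adaptable) from a slightly stronger statement about vertices, which will be Theorem \ref{prop:smoothvertex} in the sequel; the ``easy'' direction (adaptable $\Rightarrow$ some Delzant polygon) is a reformulation of part \ref{item:22} of the introductory theorem. I would first dispose of the direction assuming $\semitoric$ is adaptable. By Definition \ref{def: extendable} the underlying $\hamiltonian$ is extendable, so by Theorem \ref{prop: karshon prop 5.21} there is a symplectic toric manifold $(M,\omega,(J,\tilde H))$ whose $\T^2$-moment map extends the $S^1$-action defined by $J$; let $\Delta := (J,\tilde H)(M)$ be its Delzant polygon, which we may normalize (via an element of $\mathrm{AGL}(2;\Z)$ preserving vertical lines, i.e.\ an element of $\msT$ together with reflection of the second coordinate) so that $J$ is the first coordinate and the top/bottom boundaries match the orientation convention of Remark \ref{rmk:tau}. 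The claim is that $\Delta$ is \emph{one of} the semi-toric polygons $\polygon$ associated to $\semitoric$. This is precisely what Theorem \ref{prop:smoothvertex}/Corollary \ref{cor:all} should assert; at the level of this proof I would invoke it. Since a Delzant polygon is by definition Delzant, this gives one implication.

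For the converse, suppose some associated semi-toric polygon $\polygon$ is Delzant, i.e.\ simple, rational, convex with all vertices smooth. The strategy is to build an effective Hamiltonian $\T^2$-action on $(M,\omega)$ extending the $S^1$-action of $J$, using the Delzant construction applied to $\polygon$ and matching it to $M$ via the main theorem. Concretely: let $(M_\Delta,\omega_\Delta,\mu_\Delta)$ be the symplectic toric manifold that Delzant's theorem associates to the Delzant polygon $\polygon$, and let $(M_\Delta,\omega_\Delta,(\mu_\Delta)_1)$ be its associated Hamiltonian $S^1$-space (first coordinate). By Theorem \ref{thm: main} applied to $\semitoric$ with the chosen cuts $\boldsymbol\ep$, the labeled directed graph $\Gamma$ of $\hamiltonian$ is computed from $\polygon$ and $m_f$ by the algorithm in the proof of Theorem \ref{thm: main}; on the other hand, because $\polygon$ is Delzant it has no hidden Delzant and no fake vertices, so the bookkeeping in that algorithm reduces \emph{verbatim} to Karshon's table in Remark \ref{rk:delham} applied to the Delzant polygon $\polygon$ — i.e.\ to the labeled directed graph of $(M_\Delta,\omega_\Delta,(\mu_\Delta)_1)$. (Here one uses that when $\polygon$ is Delzant, each vertex $v$ of $\polygon$ in the interior strip has $\Z\langle \mathbf u,\mathbf w\rangle=\Z^2$ by Lemma \ref{lemma:df}, and the chains $C$ of Proposition \ref{prop:ced} are single edges.) Hence $\hamiltonian$ and $(M_\Delta,\omega_\Delta,(\mu_\Delta)_1)$ have equal labeled directed graphs, so by Karshon's classification Theorem \ref{th: karshon classification} there is an isomorphism of Hamiltonian $S^1$-spaces $\Psi:\hamiltonian\to (M_\Delta,\omega_\Delta,(\mu_\Delta)_1)$. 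Pulling back the second component $(\mu_\Delta)_2$ of the toric moment map along $\Psi$ yields a function $H':=\Psi^*(\mu_\Delta)_2$ on $M$ such that $(J,H')$ generates an effective Hamiltonian $\T^2$-action extending the one of $J$. By Definition \ref{ext} and Theorem \ref{prop: karshon prop 5.21}, $\hamiltonian$ is extendable, i.e.\ $\semitoric$ is adaptable.

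The main obstacle I anticipate is verifying carefully that the algorithm in the proof of Theorem \ref{thm: main}, specialized to a \emph{Delzant} semi-toric polygon, literally reproduces Karshon's Delzant-to-graph table of Remark \ref{rk:delham}: one must check there are no subtle discrepancies arising from focus-focus points that happen not to create hidden/fake vertices (a priori $m_f$ could be positive even when $\polygon$ is Delzant), that the isotropy-weight computation of Proposition \ref{prop: elliptic-elliptic weights} agrees with Remark \ref{rmk:wtc} in the Delzant case, and that the orientation/normalization conventions of Remark \ref{rmk:tau} are compatible with those used for genuine Delzant polygons. A secondary subtlety is the logical dependence flagged in the introduction: the ``adaptable $\Rightarrow$ Delzant polygon'' direction is proved via Theorem \ref{prop:smoothvertex}, which in turn is used in the converse; so I would make sure the converse argument above does not circularly invoke the forward direction — and indeed it does not, since it only uses Theorem \ref{thm: main}, Lemma \ref{lemma:df}, Proposition \ref{prop:ced} and Karshon's classification. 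Once these compatibility checks are in place, the argument is essentially a dictionary translation plus an application of Karshon's uniqueness theorem.
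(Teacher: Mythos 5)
Your first direction (adaptable $\Rightarrow$ some $\polygon$ is Delzant) coincides with the paper's: it is exactly Corollary \ref{cor:smooth}, deduced from Theorem \ref{prop:smoothvertex}, and invoking that result is what the paper does too. For the converse the paper takes a genuinely different route: it proves the contrapositive (Proposition \ref{prop:nonsmoothvertex}), using the characterization of non-adaptable systems in Proposition \ref{prop:nad}, the case analysis of Corollary \ref{1-6} on configurations of critical points in a level $J^{-1}(x)$ containing at least three non-free orbits, and the smoothness criterion of Lemma \ref{lemma:lff} to exhibit a non-smooth vertex for every choice of cuts. Your alternative via Theorem \ref{thm: main}, Delzant's theorem and Karshon's uniqueness Theorem \ref{th: karshon classification} is legitimate and arguably cleaner, and it is not circular; but as written it has a genuine gap.

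The gap is the assertion that a Delzant $\polygon$ ``has no hidden Delzant and no fake vertices.'' Hidden Delzant vertices are indeed excluded (they are never smooth, by Lemma \ref{lemma:lff}), but fake vertices are not: by condition \ref{item:13} of Lemma \ref{lemma:lff} a fake vertex of degree $1$ not lying on a $\Z_k$-chain \emph{is} smooth, and whenever $m_f\geq 1$ every cut terminates at some point of $\partial B$, so a Delzant $\polygon$ with $m_f \geq 1$ necessarily has fake vertices --- exactly $m_f$ of them, each of degree $1$ (a degree $\geq 2$ or hidden Delzant endpoint would be non-smooth). Consequently the algorithm of Theorem \ref{thm: main} does not reduce ``verbatim'' to Karshon's table in Remark \ref{rk:delham}: applied to $\hamiltonian$ it places no graph vertex at a fake vertex of $\polygon$ but adds $m_f$ isolated vertices for the focus-focus points, whereas applied to the toric manifold $M_{\Delta}$ built from $\polygon$ it places an honest graph vertex at each fake vertex. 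To close the gap you must match these up: in a Delzant $\polygon$ the fake vertices are in bijection with the focus-focus points, share the same first coordinate (hence the same label), and both carry isotropy weights $\{+1,-1\}$ --- for the fake vertices because $u_1=w_1=1$ by the proof of Lemma \ref{lemma:lff} together with Remark \ref{rmk:wtc}, for the focus-focus points by Proposition \ref{prop: weights extendable ff} --- so neither contributes any edge; moreover smooth fake vertices cannot lie on a $\Z_k$-chain, so each such chain is a single edge joining Delzant vertices. With this matching the two labeled directed graphs are equal and the rest of your argument (Karshon's classification, then extendability of the toric model) goes through. You do flag this issue among your anticipated obstacles, but with the picture reversed: the problem is not focus-focus points that fail to create fake vertices, it is that in the Delzant case they always create them.
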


The proof of Theorem \ref{smooth} is obtained by considering the cases of adaptable and non-adaptable semi-toric systems separately (in Sections \ref{sec:adaptable-semi-toric} and \ref{sec:non-adaptable-semi} respectively), and is obtained by combining Corollary \ref{cor:smooth} and Proposition \ref{prop:nonsmoothvertex}. In fact, Corollary \ref{cor:smooth} follows from Theorem \ref{prop:smoothvertex}, which proves a stronger property of adaptable semi-toric systems: Let $\semitoric$ be adaptable, denote by $\hamiltonian$ its underlying Hamiltonian $S^1$-space, and let $(M,\omega, \mu)$ be a symplectic toric manifold whose associated Hamiltonian $S^1$-space (in the sense of Remark \ref{rmk:ext}) is $\hamiltonian$. Then there exists a choice of cuts $\boldsymbol{\ep}$ such that $\polygon = \Delta$, where $\Delta$ is the Delzant polygon classifying $(M,\omega,\mu)$. In other words, the family of semi-toric polygons associated to $\semitoric$ contains all the Delzant polytopes classifying symplectic toric manifolds whose associated Hamiltonian $S^1$-space is $\hamiltonian$. \\

Henceforth, fix a semi-toric system $(M,\omega, \Phi)$. Let $\polygon$ be a semi-toric polygon associated to $\semitoric$. The following lemma gives a necessary and sufficient condition for vertices of $\polygon$ to be smooth.

\begin{lemma}\label{lemma:lff}
  A vertex $v$ of $\polygon$ is smooth if and only if $v$ is either
  \begin{enumerate}[label=(\alph*), ref=(\alph*)]
  \item \label{item:12} Delzant, or
  \item \label{item:13} fake with degree 1 and not lying on a chain of edges corresponding to a $\Z_k$-sphere (cf.\ Definition \ref{defn:vertices} and Section \ref{sec:edge-set-e}).
  \end{enumerate}
\end{lemma}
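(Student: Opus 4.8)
The plan is to analyze each of the three types of vertex (Delzant, hidden Delzant, fake) of $\polygon$ using Lemma \ref{lemma:df}, which already records the relation between $\Z\langle\mathbf{u},\mathbf{w}\rangle$ and $\Z^2$ in each case, together with the matrix $A_v$. Recall that $v$ is smooth precisely when $\Z\langle\mathbf{u},\mathbf{w}\rangle=\Z^2$, where $\mathbf{u},\mathbf{w}$ are the primitive integral tangents to the two edges at $v$ (with positive first component).

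First I would dispose of the Delzant case: if $v$ is Delzant, Lemma \ref{lemma:df} gives $\Z\langle\mathbf{u},\mathbf{w}\rangle=\Z^2$ immediately, so $v$ is smooth; this gives the ``if'' direction for case \ref{item:12}. Next, the hidden Delzant case: Lemma \ref{lemma:df} gives $\Z\langle\mathbf{u},A_v\mathbf{w}\rangle=\Z^2$, where $A_v=\left(\begin{smallmatrix}1&0\\ \ep_v n_v&1\end{smallmatrix}\right)$ with $n_v\ge 1$. Since $\det A_v=1$, we have $\Z\langle\mathbf{u},A_v\mathbf{w}\rangle=\Z^2$ if and only if $\det(\mathbf{u}\mid A_v\mathbf{w})=\pm 1$, i.e.\ $\det(\mathbf{u}\mid\mathbf{w})=\pm 1$ (since $A_v$ has determinant one), i.e.\ $\Z\langle\mathbf{u},\mathbf{w}\rangle=\Z^2$. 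So a priori this does not rule out smoothness on determinant grounds alone — the point is rather that the two generating sets $\{\mathbf{u},\mathbf{w}\}$ and $\{\mathbf{u},A_v\mathbf{w}\}$ need not both be bases. The cleanest way is to compute directly: write $\mathbf{u}=(u_1,u_2)$, $\mathbf{w}=(w_1,w_2)$; then $A_v\mathbf{w}=(w_1,\,\ep_v n_v w_1+w_2)$, so $\det(\mathbf{u}\mid A_v\mathbf{w})=u_1(\ep_v n_v w_1+w_2)-u_2 w_1=\det(\mathbf{u}\mid\mathbf{w})+\ep_v n_v u_1 w_1$. Lemma \ref{lemma:df} forces $\det(\mathbf{u}\mid A_v\mathbf{w})=\pm1$; hence $\det(\mathbf{u}\mid\mathbf{w})=\pm1-\ep_v n_v u_1 w_1$. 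Since $v$ lies strictly between the vertical lines $x=J_{\mathrm{min}}$ and $x=J_{\mathrm{max}}$ the first coordinates $u_1,w_1$ of both edge-tangents are strictly positive (as they point in the direction of increasing $J$), so $u_1 w_1\ge 1$ and $n_v\ge 1$, giving $|\,\ep_v n_v u_1 w_1\,|\ge 1$; therefore $|\det(\mathbf{u}\mid\mathbf{w})|\ge |n_v u_1 w_1|-1\ge 0$, and it equals zero only in degenerate cases that cannot occur for a simple polygon (the two edges are not parallel). More precisely $\det(\mathbf{u}\mid\mathbf{w})\ne 0$, and $\det(\mathbf{u}\mid\mathbf{w})=\pm1$ would force $n_v u_1 w_1\in\{0,2\}$, hence $n_v=1$, $u_1=w_1=1$ and the sign arranged so that $\pm1-\ep_v u_1 w_1=\pm1$; a short case check on the four sign combinations shows this is impossible when the determinant is required to be $\pm 1$ on the nose — so hidden Delzant vertices are never smooth, matching the statement.

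For the fake case, Lemma \ref{lemma:df} gives $\Z\langle\mathbf{u},A_v\mathbf{w}\rangle=\Z\langle\mathbf{u}\rangle$, i.e.\ $A_v\mathbf{w}$ is an integer multiple of $\mathbf{u}$; since both are primitive (being $\pm$ primitive) this means $A_v\mathbf{w}=\pm\mathbf{u}$. Applying $A_v^{-1}=\left(\begin{smallmatrix}1&0\\ -\ep_v n_v&1\end{smallmatrix}\right)$ gives $\mathbf{w}=\pm A_v^{-1}\mathbf{u}=\pm(u_1,\,-\ep_v n_v u_1+u_2)$. Then $\det(\mathbf{u}\mid\mathbf{w})=\pm\big(u_1(-\ep_v n_v u_1+u_2)-u_2 u_1\big)=\mp\ep_v n_v u_1^2$. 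Thus $\Z\langle\mathbf{u},\mathbf{w}\rangle=\Z^2$ iff $n_v u_1^2=1$ iff $n_v=1$ and $u_1=1$. The condition $u_1=1$ is exactly the requirement that the fake vertex does not lie on a chain of edges corresponding to a $\Z_k$-sphere with $k\ge 2$: by Proposition \ref{prop: elliptic-elliptic weights} and the discussion in Section \ref{sec:edge-set-e}, a vertex lies on such a chain precisely when the primitive tangent to the edge on its left (equivalently on its right, for a fake vertex, where the two edges have equal first coordinate after straightening through $A_v$... ) has first coordinate $\ge 2$. Combined with $n_v=1$ this gives exactly condition \ref{item:13}. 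Putting the three cases together proves both directions of the equivalence.

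\textbf{Main obstacle.} The delicate point is the hidden Delzant case: one must argue that the two integer generating pairs $\{\mathbf{u},\mathbf{w}\}$ and $\{\mathbf{u},A_v\mathbf{w}\}$ cannot both be bases of $\Z^2$, which is not automatic from determinant considerations (both determinants change by the same $A_v$ of determinant one — the subtlety is that Lemma \ref{lemma:df} pins the relevant determinant to be $\pm 1$ rather than merely nonzero). The resolution is the sign/positivity bookkeeping: using that $u_1,w_1\ge1$ for a vertex strictly between the extremal vertical lines, together with $n_v\ge1$, forces the ``unstraightened'' determinant $\det(\mathbf{u}\mid\mathbf{w})$ to have absolute value $\ge 2$ (or at least $\ne 1$), hence $v$ is genuinely non-smooth. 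Getting this inequality airtight — in particular handling the edge cases $u_1=w_1=1$, $n_v=1$ and checking all four choices of $\ep_v$ and of the overall sign of the determinant — is where the real work lies; everything else is a direct computation with the $2\times 2$ matrix $A_v$.
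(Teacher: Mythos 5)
Your treatment of the Delzant and fake cases matches the paper's proof essentially verbatim: both rest on the identity $\det(\mathbf{u}\,A_v\mathbf{w}) = \ep_v n_v u_1 w_1 + \det(\mathbf{u}\,\mathbf{w})$ from Lemma \ref{lemma:df}, and in the fake case on the observation that $\abs{\det(\mathbf{u}\,\mathbf{w})} = n_v u_1 w_1$ equals $1$ precisely when $n_v = u_1 = w_1 = 1$, with $u_1 = w_1 = 1$ translating into ``not on a chain corresponding to a $\Z_k$-sphere''. The gap is exactly where you located it, in the hidden Delzant case, and your proposed resolution does not close it. If $v$ were smooth, then $\det(\mathbf{u}\,\mathbf{w})$ and $\det(\mathbf{u}\,A_v\mathbf{w})$ would both equal $\pm 1$; since they differ by the nonzero integer $\ep_v n_v u_1 w_1$, they would have to be $+1$ and $-1$ in some order, forcing $\ep_v n_v u_1 w_1 = \pm 2$. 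This is arithmetically consistent: take $\ep_v = +1$, $n_v u_1 w_1 = 2$, $\det(\mathbf{u}\,\mathbf{w}) = -1$, $\det(\mathbf{u}\,A_v\mathbf{w}) = +1$. So your ``short case check on the four sign combinations'' cannot succeed --- two of the four combinations survive --- and the intermediate claim that $n_v u_1 w_1 = 2$ forces $n_v = u_1 = w_1 = 1$ is also false ($n_v = 2$, $u_1 = w_1 = 1$ is allowed). Positivity of $u_1$, $w_1$, $n_v$ alone will never rule this configuration out.

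The missing input is geometric, not arithmetic. The paper uses that $\mathcal{P}_{\boldsymbol{\ep}'} = \tau_v(\polygon)$, where $\tau_v$ is the piecewise integral affine map equal to the identity on the half-plane to the left of the vertical line through $v$ and to $A_v$ on the right, and that $\tau_v$ carries one \emph{convex} polygon to another. Convexity (with a consistent orientation) fixes the sign of the determinant of the two outgoing primitive edge tangents at a vertex, so $\det(\mathbf{u}\,\mathbf{w})$ and $\det(\mathbf{u}\,A_v\mathbf{w})$ must have the \emph{same} sign; this is incompatible with their being $+1$ and $-1$, which is the contradiction. Without some such appeal to convexity of both $\polygon$ and $\mathcal{P}_{\boldsymbol{\ep}'}$, the hidden Delzant case of the lemma remains open.
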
 
\begin{proof}
The idea is to use Lemma \ref{lemma:df} to prove the result. Smoothness of Delzant vertices follows directly from Lemma \ref{lemma:df}; thus it remains to show that hidden Delzant vertices are never smooth and that fake vertices are smooth if and only if they satisfy property \ref{item:13} above. Let $v$ be a vertex of $\polygon$ and let $\mathbf{u},\mathbf{w} \in \Z^2$ be the primitive tangents to the left and right edges of $\polygon$ incident to $v$ with positive first component (the convention is the same as that in the discussion leading to Lemma \ref{lemma:df}). Suppose that $v$ is hidden Delzant. Then Lemma \ref{lemma:df} gives that $\Z\langle \mathbf{u}, A_v \mathbf{w} \rangle = \Z^2$, where 
\begin{equation*}
  A_v=\begin{pmatrix}
    1 & 0 \\
    \ep_v n_v & 1
  \end{pmatrix},
\end{equation*}
\noindent
and $\ep_v$ and $n_v$ are the sign and the degree of $v$ respectively. Let $(\mathbf{u} \,\mathbf{w})$ denote the matrix whose columns are $\mathbf{u}, \mathbf{w}$. Then, if $\mathbf{u} = (u_1,u_2)^T$ and $\mathbf{w} = (w_1,w_2)^T$, a simple calculation shows that
\begin{equation}
  \label{eq:7}
  \det (\mathbf{u}\,A_v\mathbf{w}) = \ep_v n_v u_1 w_1 + \det (\mathbf{u}\,\mathbf{w}).
\end{equation}
\noindent
Since $\ep_v n_v u_1 w_1 \neq 0$, it follows that, if $v$ is smooth, then the signs of $\det (\mathbf{u}\,A_v\mathbf{w})$ and $\det (\mathbf{u}\,\mathbf{w})$ are opposite. Recall that $\mathbf{u}, A_v\mathbf{w}$ are the left (respectively right) primitive tangent vectors to the edges incident to $v$ in a distinct semi-toric polygon $\mathcal{P}_{\boldsymbol{\ep}'}$ whose choice of cuts agrees with $\boldsymbol{\ep}$ except that there are no cuts into $v$ (so that $v$ is a Delzant vertex for $\mathcal{P}_{\boldsymbol{\ep}'}$). Moreover, $\mathcal{P}_{\boldsymbol{\ep}'} = \tau_v(\polygon)$, where $\tau_v$ is a piecewise integral affine transformation which is the identity on the left of vertical line through $v$ and $A_v$ on the right (cf.\ \vungoc\ \cite[Prop. 4.1]{vu ngoc}). This transformation preserves convexity of the semi-toric polygon and, thus, the sign of $\det (\mathbf{u}\,\mathbf{w})$ agrees with that of $\det (\mathbf{u}\,A_v\mathbf{w})$. If $v$ is smooth as a vertex of $\polygon$, this leads to a contradiction. \\

It remains to check that fake vertices are smooth if and only if they satisfy property \ref{item:13}. Suppose $v$ is fake and let $\mathbf{u}, \mathbf{w} \in \Z^2$ be as above. Lemma \ref{lemma:df} gives that $\det  (\mathbf{u}\,A_v\mathbf{w}) = 0$; thus equation \eqref{eq:7} implies that
$$ \det (\mathbf{u}\,\mathbf{w}) = - \ep_v n_v u_1 w_1. $$
\noindent
The vertex $v$ is smooth if and only if $\abs{\det (\mathbf{u}\,\mathbf{w})} =1$, which is equivalent to $n_V, u_1, w_1 = 1$ (since they are all positive integers). The first component of $\mathbf{u},\mathbf{w}$ is equal to 1 if and only if $v$ does not lie on a chain of edges corresponding to a $\Z_k$-sphere (cf.\ proof of Theorem \ref{thm: main}). This completes the proof.
\end{proof}

\subsection{Adaptable semi-toric systems}\label{sec:adaptable-semi-toric}
By condition \ref{item:17} in Theorem \ref{prop: karshon prop 5.21}, $\semitoric$ is adaptable if and only if every non-extremal level set of $J$ contains at most 2 non-free orbits and all fixed surfaces are spheres. By Proposition \ref{prop: fixed surf}, the latter is always satisfied. The only possibilities for non-free orbits of the $S^1$-action which are not extremal are:

\begin{itemize}
\item elliptic-elliptic points not lying on a symplectic sphere fixed by the $S^1$-action,
\item focus-focus points,
\item points lying on a $\Z_k$-sphere but not in $M^{S^1}$ (in light of Proposition \ref{prop:ced}, these lie on elliptic-regular orbits stabilized by a subgroup $\Z_k \subset S^1$).
\end{itemize}

Note that in all the above cases, the non-free orbits of the $S^1$-action consist of critical points of $\Phi$. As before, let $J_{\mathrm{min}}, J_{\mathrm{max}}$ denote the minimum and maximum values of $J$ respectively, and let $x \in \,]J_{\mathrm{min}}, J_{\mathrm{max}}[$. If $J^{-1}(J(x))$ does not contain a focus-focus point, then it contains at most two non-free orbits (cf.\ \vungoc\ \cite[Theorem 3.4]{vu ngoc}). Thus, in order to see whether $\semitoric$ is adaptable or not, it suffices to check that each $J^{-1}(J(x))$ containing at least one focus-focus point does not contain more than two non-free orbits of the $S^1$-action. This happens if and only if $J^{-1}(J(x))$ contains either

\begin{enumerate}[label=(A\arabic*), ref=(A\arabic*)]
\item \label{item:14} exactly one rank 0 critical point of $\Phi$, which is of focus-focus type, and at most one elliptic-regular orbit lying on a $\Z_k$-sphere, or
\item \label{item:16} exactly two rank 0 critical points of $\Phi$, which can be either both of focus-focus type, or one of focus-focus type and the other of elliptic-elliptic type, and no point lying on a $\Z_k$-sphere. 
\end{enumerate}

\begin{remark}\label{rmk:pgad}
  The proof of Theorem \ref{thm: main} implies that any semi-toric polygon $\polygon$ associated to $\semitoric$ can be used to check that the system is adaptable, i.e.\ to check that conditions \ref{item:14} and \ref{item:16} hold.
\end{remark}

Until the end of this section, assume that $\semitoric$ is adaptable with $m_f$ focus-focus critical points, and denote by $\hamiltonian$ its underlying Hamiltonian $S^1$-space. By definition, $\hamiltonian$ is extendable, which, by condition \ref{item:18} in Theorem \ref{prop: karshon prop 5.21}, is equivalent to the existence of an $S^1$-invariant metric with at most two non-trivial chains of gradient spheres (cf.\ Remark \ref{rmk:grad}). In fact, the method employed in Karshon \cite[Prop. 5.16]{karshon} to construct a symplectic toric manifold $(M,\omega, \mu)$ whose associated Hamiltonian $S^1$-space is $\hamiltonian$ is completely determined by 

\begin{enumerate}[label=(K\arabic*), ref=(K\arabic*)]
\item\label{K1} a choice of an $S^1$-invariant metric as above,
\item\label{K2} a suitable toric extension of the $S^1$-action near the minimum of $J$.
\end{enumerate}

Here `suitable' indicates that the moment map associated to the toric extension near the minimum has components $(J,\tilde{H})$ as in Theorem \ref{prop: karshon prop 5.21}. This construction consists of building the two sequences of directed edges $e_1,\ldots,e_r$ and $e'_1,\ldots,e'_{r'}$ starting at the minimum of $J$ and ending at the maximum defining (the boundary of) a Delzant polygon; in particular for each $i>1$,  the tangent to $e_i$ (respectively $e'_i$) is completely determined by the $S^1$-weights of its `initial vertex' (in the direction of $J$ increasing), the direction of $e_{i-1}$ (respectively $e'_{i-1}$) and the convexity of the Delzant polygon. Analogously, a semi-toric polygon associated to $(M,\omega,\Phi)$ is completely determined by

\begin{enumerate}[label=(VN\arabic*), ref=(VN\arabic*)]
\item\label{VN1} a choice of cuts $\boldsymbol{\ep} \in \{+1,-1\}^{m_f}$,
\item\label{VN2} a choice of suitable local action-angle coordinates around the minimum of $J$.
\end{enumerate}
Here `suitable' implies that the angles are defined using the Hamiltonian vector fields $X^J, X^{f^{(2)}(J,H)}$ for some smooth function $f^{(2)}$ as in Theorem \ref{th: vu ngoc th 3.8}. Equivalently, \ref{VN1} and \ref{VN2} determine the homeomorphism $f : \Phi(M) \subset \R^2 \to \R^2$ onto its image $\polygon$ uniquely. \\

It is clear that \ref{K2} and \ref{VN2} are entirely analogous, since a choice of suitable local action-angle coordinates gives a suitable toric extension of the $S^1$-action defined by $J$. The relation between \ref{K1} and \ref{VN1} is explored in the theorem below. Fix choices for \ref{K1} and \ref{K2}, so as to obtain a symplectic toric manifold $(M,\omega,\mu)$ classified by the Delzant polygon $\Delta$.

\begin{theorem}\label{prop:smoothvertex}
  Let $(M, \om, \Phi)$ be adaptable and $(M, \om, J)$ its underlying Hamiltonian $S^1$-space.
  Then there exist choices of cuts $\boldsymbol{\ep} \in \{+1,-1\}$ and of suitable local action-angle coordinates around the minimum of $J$ such that $\polygon = \Delta$.
\end{theorem}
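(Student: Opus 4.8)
The plan is to show that the Karshon construction of a symplectic toric manifold from an extendable Hamiltonian $S^1$-space (Karshon \cite[Prop.~5.16]{karshon}), when fed the data \ref{K1} and \ref{K2}, produces the \emph{same} two sequences of edges that \vungoc's construction produces from the data \ref{VN1} and \ref{VN2}; since both sequences of edges completely determine the respective polygons, the polygons coincide. First I would make the dictionary between the two constructions precise. The metric choice \ref{K1} yields (at most two) non-trivial chains of gradient spheres by condition \ref{item:18} of Theorem \ref{prop: karshon prop 5.21}; each gradient sphere in a chain is either a $\Z_k$-sphere (if its stabilizer is $\Z_k$ with $k\geq 2$) or a free gradient sphere passing through a focus-focus point (since by Corollary \ref{cor:zknoff} and Proposition \ref{prop: weights extendable ff} focus-focus points have weights $\{+1,-1\}$, hence lie on free gradient spheres, not $\Z_k$-spheres). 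So a chain of gradient spheres is a concatenation of $\Z_k$-spheres and free-but-focus-focus-containing spheres, joined at isolated fixed points. On the $\st$ side, by Proposition \ref{prop:ced} the $\Z_k$-spheres map under $\Phi$ to curved edges of $B$, and the free gradient spheres through focus-focus points map to the two `sides' of a cut; the choice of cuts \ref{VN1} is exactly the choice, for each focus-focus value $c_i$, of which of the two boundary components of $B$ that free sphere attaches to --- which is the same datum as choosing which chain of gradient spheres that sphere belongs to, i.e.\ it is the reflection in the metric of \ref{K1}. This is the content of the correspondence I want to set up.

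The key steps, in order, would be: (1) Using \ref{K2} resp.\ \ref{VN2}, which are manifestly the same datum (a suitable toric extension near $\Jmin$ determines and is determined by suitable local action-angle coordinates there), both constructions agree near the minimum of $J$: the bottom vertex and the two initial edges $e_1, e_1'$ of $\Delta$ coincide with those of $\polygon$, because near an elliptic-elliptic point at the minimum the $\st$-system is locally toric (Lemma \ref{lemma:df}, Delzant case) and the local normal forms match. (2) Choose the cuts $\boldsymbol{\ep}$ by declaring $\ep_i$ so that the free gradient sphere through $p_i$ (for the metric of \ref{K1}) lies in the chain whose image is the corresponding boundary component of $B$; equivalently, cut `away from' the chain containing that sphere. (3) Proceed inductively along the two chains: given that the edges agree up to $e_{i-1}$, the next edge $e_i$ in Karshon's construction is determined by the $S^1$-weights at its initial vertex, the direction of $e_{i-1}$, and convexity; the next edge of $\polygon$ is determined by the same recipe (the weights at a Delzant or hidden-Delzant vertex are read off by Proposition \ref{prop: elliptic-elliptic weights}, a $\Z_k$-sphere contributes an edge with primitive tangent $(k,b)$ by the proof of Theorem \ref{thm: main}, and across a cut the edge jumps by $A_v = \left(\begin{smallmatrix}1&0\\ \ep_v n_v & 1\end{smallmatrix}\right)$, which is exactly how a free sphere through a focus-focus point behaves in the affine structure --- cf.\ Theorem \ref{th: vu ngoc th 3.8}\ref{item:10} and Lemma \ref{lemma:df}). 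Since the two piecewise-affine boundary curves start at the same vertex, proceed by the same local rule, and both terminate at the maximum of $J$ (where again the vertices are Delzant), they coincide; hence $\polygon = \Delta$.

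The main obstacle I expect is Step (3) at the focus-focus values, i.e.\ checking that the `jump' a gradient chain undergoes when it passes through a free sphere containing a focus-focus point is exactly the affine monodromy $A_v$ appearing in \vungoc's straightening. On the Karshon side the chain simply continues as a smooth sequence of edges --- there is no jump in $\Delta$, because $\Delta$ is a genuine polygon --- whereas on the $\st$ side the cut introduces the transition matrix $\left(\begin{smallmatrix}1&0\\ j(s)&1\end{smallmatrix}\right)$. The point to nail down is that the image of the chain in $B$, read in the \emph{global} affine structure $\mathcal{A}$ (not the ambient one), already has this monodromy built in, so that after straightening via $f$ the two descriptions agree; this requires carefully combining Theorem \ref{th: vu ngoc th 3.8}\ref{item:8}--\ref{item:10} with the Eliasson-Miranda-Zung normal form near the focus-focus fiber and the fact (Proposition \ref{prop:unique_system_pres_action}) that $J$ restricts to the standard system-preserving action there. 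Everything else --- the behaviour at Delzant, hidden Delzant and fake vertices, and at the extremal surfaces --- is already packaged in Lemmas \ref{lemma:df}, \ref{lemma:lff}, Propositions \ref{prop: fixed surf}, \ref{prop: elliptic-elliptic weights}, \ref{prop:ced} and the proof of Theorem \ref{thm: main}, so the remaining work is bookkeeping.
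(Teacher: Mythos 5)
Your plan is essentially the paper's proof: an induction from $\Jmin$ to $\Jmax$ in which \ref{K2} and \ref{VN2} are matched on the initial piece and the cuts of \ref{VN1} are dictated by the metric of \ref{K1}; the paper merely organizes the induction by the vertical slices cut out by the $J$-values of the focus-focus points rather than along chains of gradient spheres. The one substantive divergence is at the step you flag as the main obstacle: instead of verifying that the affine monodromy $A_v$ across a cut reproduces the transition of a gradient chain through a focus-focus point, the paper avoids any monodromy computation. Directing the cut towards the boundary component of $B$ carrying the left edge incident to the corresponding vertex $v$ of $\Delta$ produces a fake vertex of degree one whose left tangent has first coordinate $1$ (the focus-focus weights are $\{+1,-1\}$ by Proposition \ref{prop: weights extendable ff}), so it cannot lie on a chain of edges coming from a $\Z_k$-sphere and is therefore smooth by Lemma \ref{lemma:lff}; smoothness, the known left tangent and convexity of both $\polygon$ and $\Delta$ then force the right-hand edges to agree, and the same smoothness-plus-convexity argument disposes of case \ref{item:16}. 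Two corrections to your step (2): the cut must go \emph{towards} (not away from) the boundary component corresponding to the chain containing the gradient sphere through $p_i$, since the fake vertex has to land where the vertex of $\Delta$ sits; and when an elliptic-elliptic point shares the level with a focus-focus point this same choice is what keeps the cut from running into the elliptic-elliptic vertex and creating a non-smooth hidden Delzant vertex, a configuration your dictionary should exclude explicitly.
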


Before proceeding to the proof of theorem \ref{prop:smoothvertex}, note that different choices of metrics satisfying \ref{K1} and different choices of toric extensions near the minimum of $J$, as in \ref{K2}, give rise to all symplectic toric manifolds whose associated Hamiltonian $S^1$-spaces are all isomorphic to $\hamiltonian$. Thus an immediate consequence of Theorem \ref{prop:smoothvertex} is the following corollary.

\begin{corollary}\label{cor:all}
  The family of semi-toric polygons associated to an adaptable system $\semitoric$ contains all Delzant polygons classifying the symplectic toric manifolds whose associated Hamiltonian $S^1$-space is $\hamiltonian$. 
\end{corollary}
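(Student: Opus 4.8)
The statement to prove is Theorem \ref{prop:smoothvertex}: given an adaptable semi-toric system $\semitoric$ with underlying Hamiltonian $S^1$-space $\hamiltonian$, and given a symplectic toric manifold $(M,\omega,\mu)$ whose associated Hamiltonian $S^1$-space is $\hamiltonian$ (arising from choices \ref{K1} of an $S^1$-invariant metric with at most two non-trivial chains of gradient spheres and \ref{K2} of a suitable toric extension near the minimum of $J$), there exist choices \ref{VN1} of cuts $\boldsymbol{\ep}$ and \ref{VN2} of suitable local action-angle coordinates around the minimum such that the resulting semi-toric polygon $\polygon$ equals the Delzant polygon $\Delta$.

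The plan is to exploit the observation, already made in the paragraph preceding the theorem, that the two construction recipes are built the same way: each produces two directed edge-paths from the vertical line $\{x = J_{\mathrm{min}}\}$ to the vertical line $\{x = J_{\mathrm{max}}\}$, where each successive edge is forced by the $S^1$-isotropy weight at its initial vertex, the direction of the previous edge, and convexity. First I would fix \ref{VN2} to be the action-angle coordinates corresponding to the toric extension chosen in \ref{K2}; by the remark that \ref{K2} and \ref{VN2} are entirely analogous, this makes the bottom-left behaviour of $\polygon$ and $\Delta$ coincide — in particular the Delzant vertices on $\{x=J_{\mathrm{min}}\}$ and the initial edges agree (cf.\ Proposition \ref{prop: fixed surf} and Remark \ref{rmk:pfsigma} for the vertical edge, and the proof of Theorem \ref{thm: main} for how edges emanate). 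Next I would choose the cuts $\boldsymbol{\ep}$ so that every cut points into the vertex of $B$ it emanates from being ``resolved'': concretely, for each focus-focus value $c_i$, the orbit $J^{-1}(x_i)$ containing it meets the boundary $\partial B$; using the adaptability conditions \ref{item:14} and \ref{item:16}, $J^{-1}(x_i)$ contains at most one other non-free orbit, which is elliptic (either an elliptic-elliptic point on the boundary, or a point on a $\Z_k$-sphere whose curved edge is the relevant boundary piece), and I choose $\ep_i$ to direct the cut $l_i^{\ep_i}$ towards that boundary vertex/edge. The point of this choice is that it turns exactly the vertices that would otherwise be hidden Delzant or ``non-smooth fake'' into genuine Delzant vertices of $\polygon$, by Lemma \ref{lemma:df} and Lemma \ref{lemma:lff}.

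With both choices fixed, I would then argue by induction along the two chains that $\polygon = \Delta$. The base case is the shared vertical edge (or shared Delzant vertex) on $\{x=J_{\mathrm{min}}\}$ together with the two initial outgoing edges, which agree by the choice of \ref{VN2}$=$\ref{K2}. For the inductive step, suppose the two polygons agree up to and including some edge $e$; let $v$ be its terminal vertex. If $v$ corresponds to a focus-focus point of $\Phi$, then by the choice of cuts $v$ is in fact a Delzant vertex of $\polygon$ and its $S^1$-isotropy weights are $\{+1,-1\}$ by Proposition \ref{prop: weights extendable ff}; the next edge of $\Delta$ at $v$ is forced by those same weights (the corresponding point is elliptic-elliptic for $\mu$, cf.\ Figure \ref{fD}), the direction of $e$, and convexity — and by Proposition \ref{prop: elliptic-elliptic weights} and convexity (Theorem \ref{th: vu ngoc th 3.8}\ref{item:11}) the next edge of $\polygon$ is forced by the same data, so they coincide. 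If $v$ corresponds to an elliptic-elliptic point, the same argument applies using Proposition \ref{prop: elliptic-elliptic weights} directly. If $v$ is a fake vertex of $\polygon$ (which, by the choice of cuts and Lemma \ref{lemma:lff}, is then smooth of degree $1$ and not on a $\Z_k$-chain), then $v$ is not a vertex of $\Delta$ at all and the edge continues straight; one checks that the edge of $\polygon$ continuing through $v$ is collinear with $e$ (this is exactly the ``smooth fake vertex'' condition $\Z\langle\mathbf{u},A_v\mathbf{w}\rangle = \Z\langle\mathbf{u}\rangle$ with degree one forcing $\mathbf{u}=\mathbf{w}$), so again the two polygons agree on the next edge. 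Since both chains terminate on $\{x=J_{\mathrm{max}}\}$ (at a shared Delzant vertex or shared vertical edge, by Proposition \ref{prop: fixed surf} again) and there are finitely many edges, the induction closes and $\polygon = \Delta$.

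The main obstacle I anticipate is the bookkeeping in the inductive step when $J^{-1}(x_i)$ contains a focus-focus point together with a $\Z_k$-sphere orbit (case \ref{item:14}): here one must be careful that the single cut $l_i^{\ep_i}$, directed towards the $\Z_k$-sphere's curved edge, simultaneously (a) resolves the would-be hidden/fake vertex on that edge into a Delzant vertex whose weight $k$ is read off correctly by Proposition \ref{prop: elliptic-elliptic weights}, and (b) is consistent with the fact that the corresponding $\Z_k$-sphere appears in $\Delta$ as a single straight edge with primitive tangent $(k,b)$ (cf.\ Remark \ref{rk:delham}) rather than as a chain. In other words, one must verify that the chain-of-edges picture of $\Z_k$-spheres in a general semi-toric polygon (end of Section \ref{sec:z_k-spheres}) collapses, for this particular choice of cuts, to the single-edge picture of the toric case. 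This amounts to checking that with the cut directed ``into'' the far vertex of the curved edge, no fake vertex survives along that edge in $\polygon$ — which follows from Lemma \ref{lemma:lff} once one knows the degree of the (now hidden-Delzant-turned-Delzant) endpoint absorbs all the cuts on that orbit — but making this precise requires tracking multiplicities $j(c_i)$ carefully via Theorem \ref{th: vu ngoc th 3.8}\ref{item:10}. The remaining steps are then routine given Lemmas \ref{lemma:df}, \ref{lemma:lff} and Propositions \ref{prop: fixed surf}, \ref{prop: weights extendable ff}, \ref{prop: elliptic-elliptic weights}.
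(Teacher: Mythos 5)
Your high-level strategy is the paper's: identify the choice \ref{VN2} with \ref{K2}, let the toric data dictate the cuts, and propagate equality of $\polygon$ and $\Delta$ across the focus-focus levels using convexity together with Propositions \ref{prop: weights extendable ff} and \ref{prop: elliptic-elliptic weights}. However, your cut-selection rule is backwards, and this is not a cosmetic slip. By Definition \ref{defn:vertices}, directing the cut $l_i^{\ep_i}$ \emph{into} the image of an elliptic-elliptic point produces a \emph{hidden Delzant} vertex, which by Lemma \ref{lemma:lff} is never smooth; and directing it onto the curved edge of a $\Z_k$-sphere produces a fake vertex lying on a $\Z_k$-chain, again non-smooth, which moreover breaks into two edges what in $\Delta$ is a single straight edge with tangent $(k,b)$. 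Either way $\polygon \neq \Delta$. The correct choice (the one the paper makes in cases \ref{item:14} and \ref{item:16}) is the opposite one: send each cut towards the boundary component on which $\Delta$ has its vertex at that value of $J$, i.e.\ \emph{away} from the other non-free orbit; when two focus-focus points share a level the two cuts go in opposite directions; and when the focus-focus point is the \emph{only} non-free orbit at its level your rule gives no instruction at all, whereas the direction must still be read off from $\Delta$ (equivalently from the metric \ref{K1}) --- this is precisely the content of the slogan ``the chosen metric determines the cuts''.

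A second, related error sits in your inductive step at a fake vertex. You assert that smoothness with degree one forces $\mathbf{u}=\mathbf{w}$, so that the edge of $\polygon$ ``continues straight'' through $v$ and matches an edge of $\Delta$ having no vertex there. In fact Lemma \ref{lemma:df} gives $A_v\mathbf{w}=\mathbf{u}$, hence $\mathbf{w}=A_v^{-1}\mathbf{u}\neq \mathbf{u}$: the polygon genuinely bends at a smooth fake vertex, by one lattice unit. This is exactly what is needed, because $\Delta$ \emph{does} have a vertex at that point --- the image under $\mu$ of the focus-focus point, which is elliptic-elliptic for the toric system and is a genuine Delzant corner with edge tangents $(1,u_2)$, $(1,w_2)$, $\abs{u_2-w_2}=1$. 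Finally, your proposed repair of ``directing the cut into the far vertex of the curved edge'' is not available: cuts are vertical segments at $x=x_i$ by construction (Theorem \ref{th: vu ngoc th 3.8}), so they cannot reach a vertex lying over a different value of $J$. Once the cut rule is reversed and the fake vertices are understood as genuine corners coinciding with the focus-focus vertices of $\Delta$, the induction (which the paper organizes by vertical slices between consecutive focus-focus $x$-values rather than edge by edge) does close essentially as you describe, and the corollary follows since varying \ref{K1} and \ref{K2} realizes every symplectic toric manifold whose associated Hamiltonian $S^1$-space is $\hamiltonian$.
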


\begin{exm}\label{exm:sad_zhi_wow}
  Corollary \ref{cor:all} generalises a phenomenon that occurs for coupled angular momenta on $S^2 \times S^2$: Consider Sadovski{\'{\i}} $\&$ Z{\^h}ilinski{\'{\i}} \cite[Figure 3]{sad_zhi} where the Delzant polygons are the ones on the leftmost and rightmost picture and those obtained by composing those pictures with the transformation $(x,y) \mapsto (x,-y)$. 
\end{exm}

\begin{remark}\label{rmk:nop}
  In order to obtain all Delzant polygons in Corollary \ref{cor:all}, the homeomorphism $f: B \subset \R^2 \to \polygon \subset \R^2$ may have to be chosen to be orientation-reversing (once an orientation in $\R^2$ is fixed), as illustrated by Example \ref{exm:sad_zhi_wow}.
\end{remark}

The main idea behind the proof of Theorem \ref{prop:smoothvertex} is that

\begin{center}
  {\em the chosen metric determines the cuts.}
\end{center}
\noindent
thus illustrating the relation between \ref{K1} and \ref{VN1}. The proof itself proceeds by induction on the number of vertical lines on which the cuts lie. Given the standard coordinates $x,y$ on $\R^2$, let $x_1 \leq x_2 \leq \ldots \leq x_{m_f}$ denote the $x$-coordinates of the focus-focus critical values, and let $J_{\textrm{min}}$ (respectively $J_{\textrm{max}}$) denote the minimum (respectively maximum) value of $J$.  Note that $J_{\mathrm{min}} < x_1$ and that $x_{m_f} < J_{\mathrm{max}}$. Set $N :=|\{x_1,\ldots,x_{m_f}\}| \leq m_f$ and let $\mathsf{x}_1 = x_1 < \mathsf{x}_2 <\ldots <\mathsf{x}_N=x_{m_f}$ denote the distinct values of the $x$-coordinates of the focus-focus critical values. For $i=0,\ldots, N$, define the \emph{slice} $S_i$ of $\Phi(M)$ by
\begin{equation*}\label{slices}
\begin{cases}
  S_0 := \Phi(M)\cap \{(x,y)\in \R^2\mid J_{\textrm{min}} \leq x < \mathsf{x}_{1}\} & \\
  S_i:=\Phi(M)\cap \{(x,y)\in \R^2\mid \mathsf{x}_i < x < \mathsf{x}_{i+1}\} & \mbox{for } i=1,\ldots N-1 \\
  S_{N}:=\Phi(M)\cap \{(x,y)\in \R^2\mid \mathsf{x}_{N} < x \leq J_{\textrm{max}}\} .&			
\end{cases}
\end{equation*}

\begin{proof}[Proof of Theorem \ref{prop:smoothvertex}]
Recall that choices of \ref{K1} and \ref{K2} are fixed, so as to obtain a Delzant polygon $\Delta$ classifying a symplectic toric manifold $(M,\omega,\mu)$. Note that focus-focus critical points for $\semitoric$ map to vertices of $\Delta$ which are not extremal with respect to $J$. Moreover, the $x$ coordinate of the vertices of $\Delta$ is given by the value of $J$ at the corresponding critical point of $\Phi$; in particular, if $c_j = (x_j,y_j)$ is a focus-focus critical value, for $j=1,\ldots, m_f$, then the corresponding vertex of $\Delta$ has coordinates $(x_j, y'_j)$. In analogy with the definition of slices given above, define for $i=0,\ldots,N$ the $\Delta$-slices by
$$
\begin{cases}
  S^{\Delta}_0 := \Delta\cap \{(x,y)\in \R^2\mid J_{\textrm{min}} \leq x <\mathsf{x}_{1}\} & \\
  S^{\Delta}_i:=\Delta\cap \{(x,y)\in \R^2\mid \mathsf{x}_i < x < \mathsf{x}_{i+1}\} & \mbox{for } i=1,\ldots N-1 \\
  S^{\Delta}_{N}:=\Delta\cap \{(x,y)\in \R^2\mid \mathsf{x}_{N} < x \leq J_{\textrm{max}}\} .&			
\end{cases}
$$
\noindent
Note that there is a one-to-one correspondence between slices $S_0, \ldots, S_{N}$ and $\Delta$-slices $S^{\Delta}_0,\ldots, S^{\Delta}_{N}$, which preserves the labeling as illustrated by the figure below.
\begin{figure}[h]
  \begin{center}
    
    \input{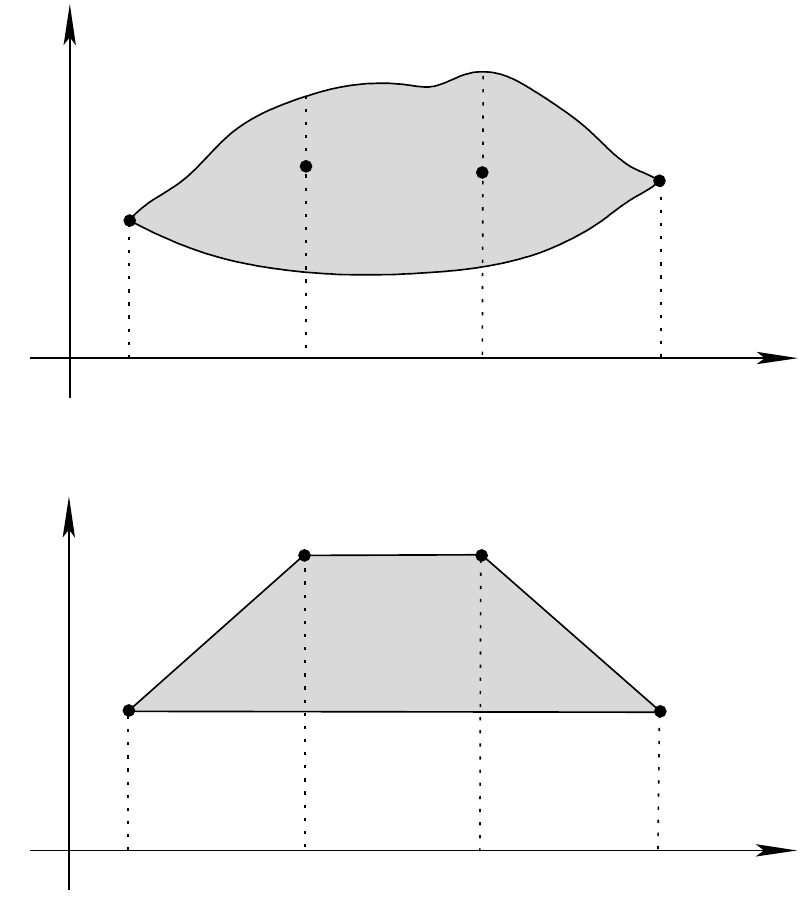_t}
    \caption{Slices and $\Delta$-slices}
    \label{adaptableDelzant}
    
  \end{center}
\end{figure}

The proof proceeds by induction on the number of slices of the curved polygon. If there is only one slice then there is nothing to prove, since the system is of toric type in the sense of \vungoc\  \cite[Def.\  2.1 and Cor. 3.5]{vu ngoc}. Suppose there are slices $S_0, \ldots, S_{N}$ for $N \geq 1$ of the curved polygon. Firstly, observe that the homeomorphism $f_0 := f|_{S_0}$ can be defined in such a way that $f_0(S_0)=S^{\Delta}_0$; this follows from the fact that the action of $\msT$ (cf.\ Remark \ref{rmk:tau}) on $S^{\Delta}_0$ gives all suitable toric extensions of the $S^1$-action defined by $J$ near the minimum and $f_0$ is one such. This is nothing but restating the fact that choices \ref{K2} and \ref{VN2} are equivalent.
Therefore the following may be assumed. \\

\noindent
\textbf{Inductive hypothesis:} A choice of cuts has been made so that $f$ is defined on $\Big(\bigcup\limits_{i=0}^{k-1} \bar{S}_i\Big) \cup S_k$ and that, for all $i=0,\ldots,k$, $f(S_i)= S^{\Delta}_i$. \\

The idea behind the construction of $f$ is to define it slice by slice, at each stage making a choice of cuts so as to obtain a unique extension to the next slice (cf.\ Theorem \ref{th: vu ngoc th 3.8}). Thus the inductive step consists of choosing the cut(s) along the vertical line $\{(x,y) \mid x = \mathsf{x}_{k+1}\}$ in $\Phi(M)$ so that the resulting extension of $f$ on  $\Big(\bigcup\limits_{i=0}^{k} \bar{S}_i\Big) \cup S_{k+1}$ satisfies $f(S_{k+1})= S^{\Delta}_{k+1}$. In order to check this last equality, it suffices to check that the edges incident to the vertices in $f\Big(\Big(\bigcup\limits_{i=0}^{k} \bar{S}_i\Big) \cup S_{k+1}\Big)$ created by the cuts have primitive tangent vectors equal to those of the corresponding vertices in $\Delta$ (up to sign). Note that the choice of cuts is going to be such that there will be a one-to-one correspondence between vertices of $f\Big(\Big(\bigcup\limits_{i=0}^{k} \bar{S}_i\Big) \cup S_{k+1}\Big)$ and of $\Delta$ along the vertical line $\{(x,y) \mid x = \mathsf{x}_{k+1}\}$. Since the semi-toric system is adaptable, condition \ref{item:17} of Theorem \ref{prop: karshon prop 5.21} implies that $J^{-1}(\mathsf{x}_{k+1})$ contains at most two non-free orbits of the $S^1$-action, one of which must be of focus-focus type for $\Phi$ by definition of $\mathsf{x}_{k+1}$. There are two distinct cases to consider, depending on conditions \ref{item:14} and \ref{item:16} described above. \\

\noindent
\underline{Case \ref{item:14}:} exactly one critical point of rank 0 for $\Phi$ in $J^{-1}(\mathsf{x}_{k+1})$. There is only one vertex $v$ of $\Delta$ whose first coordinate equals $\mathsf{x}_{k+1}$; this follows from the description of the isolated fixed points of $M^{S^1}$ (cf.\ Lemma \ref{lemma:vertices} and Theorem \ref{prop: karshon prop 5.21}). Moreover, since this critical point is of focus-focus type (by definition of $\mathsf{x}_{k+1}$), its isotropy weights for the $S^1$-action are $+1, -1$ (cf.\ Proposition \ref{prop: weights extendable ff}). By construction of $\Delta$, the primitive tangent with positive first coordinate $\mathbf{u} \in \Z^2$ (respectively $ \mathbf{w} \in \Z^2$) to the left (respectively right) edge incident to $v$ is of the form $(1, u_2)^T$ (respectively $(1,w_2)^T$). Choose the (only!) cut so that $v$ is a vertex of the image of the extension of $f$ defined on $\Big(\bigcup\limits_{i=0}^{k} \bar{S}_i\Big) \cup S_{k+1}$. This can be achieved as follows. By assumption, there is (part of!) a curved edge in $\Phi(M) \cap S_{k}$ which, under $f$, maps to the edge incident to $v$ on the left (in the direction of increasing $J$). Directing the cut towards this edge is the required choice; extend $f$ to $S_{k+1}$ using the method of Theorem \ref{th: vu ngoc th 3.8}. Observe that, by assumption and by the fact that $f$ is continuous, the vertex of $\Big(\Big(\bigcup\limits_{i=0}^{k} \bar{S}_i\Big) \cup S_{k+1}\Big)$ with first coordinate equal to $\mathsf{x}_{k+1}$ equals $v$, i.e.\ they have the same coordinates. Note that the curved edge mapping to the edge incident to $v$ on the left is not a $\Z_k$-sphere, for, otherwise, any primitive tangent vector of its image would have first component equal to $k \geq 2$ in absolute value (cf.\ the proof of Theorem \ref{thm: main}). However, any such vector equals $ \pm \mathbf{u} = (\pm 1, \pm u_2)^{T}$. In particular, this implies that $v$ is a fake vertex satisfying condition \ref{item:13} of Lemma \ref{lemma:lff} and, thus, it is smooth (for the resulting semi-toric polygon). Since both $\Delta$ and $f\Big(\Big(\bigcup\limits_{i=0}^{k} \bar{S}_i\Big) \cup S_{k+1}\Big)$ are convex, it follows that the edges incident to $v$ on the right in $\Delta$ and $f\Big(\Big(\bigcup\limits_{i=0}^{k} \bar{S}_i\Big) \cup S_{k+1}\Big)$ have equal primitive tangent vectors (up to a sign). \\

\noindent
\underline{Case \ref{item:16}:} exactly two critical points of rank 0 for $\Phi$ in $J^{-1}(\mathsf{x}_{k+1})$. Lemma \ref{lemma:vertices} and Theorem \ref{prop: karshon prop 5.21} imply that $\Delta$ has exactly two vertices $v, v'$ whose first coordinate is $\mathsf{x}_{k+1}$. The corresponding critical points of rank 0 for $\Phi$ are either both of focus-focus type, or one is of focus-focus type and the other is elliptic-elliptic. Accordingly, either $\Phi(M)$ has no vertex with first coordinate equal to $\mathsf{x}_{k+1}$, or exactly one. In the first case, choose the two cuts to go in opposite directions, while in the second, choose the only cut so that it does not go into the vertex of $\Phi(M)$. Extend $f$ as in Theorem \ref{th: vu ngoc th 3.8}. Observe that in either case $f\Big(\Big(\bigcup\limits_{i=0}^{k} \bar{S}_i\Big) \cup S_{k+1}\Big)$ has exactly two vertices with first coordinate equal to $\mathsf{x}_{k+1}$; by assumption and by continuity of $f$, it follows that one of these is equal to $v$, while the other is equal to $v'$, i.e.\ they have the same coordinates. In all cases, the vertices in $f\Big(\Big(\bigcup\limits_{i=0}^{k} \bar{S}_i\Big) \cup S_{k+1}\Big)$ lying on the vertical line $\{(x,y) \mid x = \mathsf{x}_{k+1}\}$ satisfy either property \ref{item:12} (the image of the vertex of $\Phi(M)$ with first coordinate $x = \mathsf{x}_{k+1}$) or \ref{item:13} (all other) of Lemma \ref{lemma:lff} and, thus, they are smooth. Moreover, the left edges incident to these vertices in $f\Big(\Big(\bigcup\limits_{i=0}^{k} \bar{S}_i\Big) \cup S_{k+1}\Big)$ have primitive tangent vectors which agree (up to sign) with those of the edges incident to $v, v'$ in $\Delta$. Again, convexity of $\Delta$ and of $f\Big(\Big(\bigcup\limits_{i=0}^{k} \bar{S}_i\Big) \cup S_{k+1}\Big)$ imply that the same hold for the edges on the right of these vertices.
\end{proof}

The following corollary, which follows at once from Theorem \ref{prop:smoothvertex}, proves the first part of Theorem \ref{smooth}. 

\begin{corollary}\label{cor:smooth}
  An adaptable semi-toric system $(M,\omega,\Phi)$ admits a Delzant semi-toric polygon.
\end{corollary}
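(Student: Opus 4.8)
The plan is to deduce the corollary immediately from Theorem \ref{prop:smoothvertex} together with Lemma \ref{lemma:lff}. Since $\semitoric$ is adaptable, its underlying Hamiltonian $S^1$-space $\hamiltonian$ is extendable, so by Theorem \ref{prop: karshon prop 5.21} there exists a symplectic toric manifold $(M,\omega,\mu)$ whose associated Hamiltonian $S^1$-space is $\hamiltonian$; let $\Delta := \mu(M)$ be its Delzant polygon, which is simple, rational, convex, and has all vertices smooth by Delzant's classification (cf.\ Definition \ref{defin:delzant_polytopes} and \cite{delzant}). Theorem \ref{prop:smoothvertex} then produces a choice of cuts $\boldsymbol{\ep} \in \{+1,-1\}^{m_f}$ and of suitable local action-angle coordinates around the minimum of $J$ for which the associated semi-toric polygon satisfies $\polygon = \Delta$.

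It remains only to observe that $\polygon$ is Delzant as a labeled convex polygon in the sense required. By property \ref{item:11} of Theorem \ref{th: vu ngoc th 3.8}, $\polygon$ is always simple, rational, and convex, so the only thing to verify is that all its vertices are smooth. But $\polygon = \Delta$ as subsets of $\R^2$, and $\Delta$ has all vertices smooth, whence every vertex of $\polygon$ is smooth. (Alternatively, one can argue directly from Lemma \ref{lemma:lff}: inspecting the proof of Theorem \ref{prop:smoothvertex}, each vertex produced along a vertical line $\{x = \mathsf{x}_{k+1}\}$ is either Delzant or fake of degree $1$ not lying on a chain of edges corresponding to a $\Z_k$-sphere, hence smooth by Lemma \ref{lemma:lff}, while the vertices on the extremal vertical lines $\{x = J_{\mathrm{min}}\}$ and $\{x = J_{\mathrm{max}}\}$ are Delzant by construction of $f$.) Therefore $\polygon$ is a Delzant semi-toric polygon associated to $\semitoric$, which is the assertion of the corollary.

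There is essentially no obstacle here: the entire mathematical content lives in Theorem \ref{prop:smoothvertex}, and the corollary is a two-line bookkeeping argument combining that theorem with the definition of a Delzant polygon. The only point requiring a moment's care is confirming that the notion of ``Delzant'' in the target of Theorem \ref{prop:smoothvertex} (namely, $\polygon$ literally coinciding with a Delzant polygon $\Delta$) matches the notion of ``Delzant semi-toric polygon'' in the statement of the corollary (namely, $\polygon$ being a simple, rational, convex polygon with all vertices smooth), which it does by property \ref{item:11} of Theorem \ref{th: vu ngoc th 3.8}.
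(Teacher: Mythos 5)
Your proposal is correct and follows exactly the paper's route: Corollary \ref{cor:smooth} is stated there as an immediate consequence of Theorem \ref{prop:smoothvertex}, since the equality $\polygon = \Delta$ with $\Delta$ a Delzant polygon directly yields a Delzant semi-toric polygon. The extra bookkeeping you supply (property \ref{item:11} of Theorem \ref{th: vu ngoc th 3.8} and the alternative check via Lemma \ref{lemma:lff}) is consistent with, and slightly more explicit than, what the paper leaves implicit.
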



\subsection{Non-adaptable semi-toric systems}\label{sec:non-adaptable-semi}
In order to complete the proof of Theorem \ref{smooth}, this section proves that semi-toric systems which fail to be adaptable (henceforth referred to as {\bf non-adaptable}) do not admit any semi-toric polygon whose vertices are all smooth. This is shown via a sequence of simple observations. \\

Before doing so, a useful characterization of non-adaptable semi-toric systems is provided below; this includes both a local and a global condition.
Let $\Sigma$ be an embedded surface in $M$ and denote by $I_\Sigma$ its self-intersection, i.e.\ $I_\Sigma:=\int_\Sigma PD[\Sigma]$, where
$PD[\Sigma]$ is the Poincar\'e dual of $\Sigma$ in $M$.
\purge{
\begin{lemma}\label{lemma Sigma}
Let $(M',\omega)$ be a 4-dimensional symplectic manifold (not necessarily compact) endowed with an effective Hamiltonian $\T^2$-action,
with moment map $\mu=(J,\widetilde{H})\colon M'\to \R^2$. Suppose that $-\infty<J_{\mathrm{min}}$ and $\Sigma=J^{-1}(J_{\mathrm{min}})$ is a ( $\T^2$-invariant ) sphere.
Moreover let  $\mathbf{w},\mathbf{w}'\in \Z^2$ be the primitive integral
 vectors tangent to $e$ and $e'$, the edges coming out of $\mu(\Sigma)$, with positive first coordinate, $\mathbf{v}\in \Z^2$ the primitive integral tangent vector 
 to $\mu(\Sigma)$ with positive second coordinate and $v'$ the vertex at the intersection of the extensions of $e$ and $e'$ (see Fig. \ref{halfpolygon}).
 Then $$| I_\Sigma|=|\det(\mathbf{w}\,\mathbf{w}')| \;.$$ 
 Hence $v'$ is smooth if and only if $|I_{\Sigma}|=1$. 
 
\end{lemma}
\begin{proof}
Let $x,y$ be the standard $\Z$-basis of the lattice $\Z^2$. By the effectiveness of the $\T^2$ action (or equivalently, by the smoothness of the two vertices of $\mu(M')$ corresponding to the images of
the poles of the sphere $\Sigma$)
 we have that $\mathbf{v}=y$, $\mathbf{w}=x+ay$ and $\mathbf{w}'=x+by$ for some $a,b\in \Z$, hence $|\det(\mathbf{w}\,\mathbf{w}')|=|b-a|$.
 
 Let $PD[\Sigma]=[\sigma]\in H^2(M;\Z)$ for some $\sigma\in \Omega^2(M)$ (note that $H^*(M;\Z)$ is torsion free, so we can regard it as a sublattice of $H^2(M;\R)$).
 Since $\Sigma$ is $\T^2$-invariant, we can consider its ``equivariant Poincar\'e dual", namely $PD^{\T^2}[\Sigma]=[\sigma+\mu]\in H^2_{\T^2}(M;\Z)$, where $\sigma+\mu$ is a $2$-form
 in the Cartan complex $(\Omega_{\T^2}(M),d_{\T^2})$ which is $d_{\T^2}$-closed and representing an element in $H_{\T^2}^2(M;\Z)$ (which again can be regarded
 as a sublattice of $H_{\T^2}^2(M;\R)$). Moreover at each fixed point $p$ of the $\T^2$-action on $\Sigma$, $PD^{\T^2}[\Sigma](p)=e_{\T^2}^{N_p}$, the equivariant
 Euler class of the normal bundle $N_p$ of $T_p\Sigma$ in $T_pM$, i.e.\ the product of the
 weights of the $\T^2$ representation on $N_p$. In this case, if we denote by $v_1$ and $v_2$ respectively the south and north pole of $\Sigma$ as in Figure \ref{halfpolygon},
  $PD^{\T^2}[\Sigma](v_1)=\mathbf{w}'$ and  $PD^{\T^2}[\Sigma](v_2)=\mathbf{w}$. Since $\int_{\Sigma}PD[\Sigma]=\int_\Sigma PD^{\T^2}[\Sigma]$, using the
  Atiyah-Bott-Berline-Vergne Localization formula we get
  $$
  I_\Sigma=\int_{\Sigma}PD[\Sigma]=\int_\Sigma PD^{\T^2}[\Sigma]=\frac{\mathbf{w}'}{\mathbf{v}}+\frac{\mathbf{w}}{-\mathbf{v}}=b-a
  $$

\end{proof}

\begin{figure}[h]
\begin{center}
\input{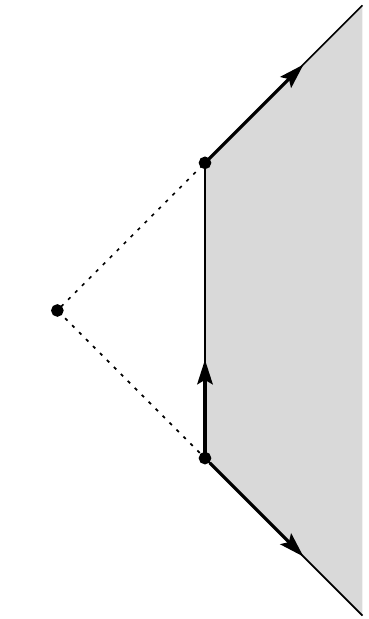_t}
\caption{}
\label{halfpolygon}
\end{center}
\end{figure}

}
\begin{proposition}\label{prop:nad}
  A semi-toric system $\semitoric$ is non-adaptable if and only if
\begin{enumerate}
 \item \emph{Global:} There exists a sphere $\Sigma$ in $M^{S^1}$ (either at the minimum or maximum of $J$) with $ I_\Sigma\neq -1$.
 \item \emph{Local:} There exists $x \in \,]J_{\mathrm{min}},J_{\mathrm{max}}[$ such that $J^{-1}(x)$ contains at least three non-free orbits of the $S^1$-action generated by $J$.
 \end{enumerate}
\end{proposition}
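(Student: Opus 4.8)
The plan is to derive the proposition from Karshon's extendability criterion (Theorem~\ref{prop: karshon prop 5.21}) together with the fact, recorded in Proposition~\ref{prop: fixed surf}, that in a semi-toric system every connected fixed surface of $(M,\omega,J)$ is a sphere. The point of the latter is that it makes the ``genus zero'' clause of condition~\ref{item:17} automatic, so that $\hamiltonian$ fails to be extendable \emph{exactly} when some non-extremal level set of $J$ contains at least three non-free orbits of the $S^1$-action; that is, non-adaptability is equivalent to the Local condition (2). In particular this already yields the implication ``(1) and (2) $\Rightarrow$ non-adaptable'' for free, since (2) alone suffices; so the content of the statement is the reverse implication, and within it the only non-formal assertion is that a non-adaptable system also satisfies the Global condition (1).

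To prove non-adaptable $\Rightarrow$ (1), I would argue by contradiction. First, by Karshon's Theorem~\ref{ifp} a Hamiltonian $S^1$-space all of whose fixed points are isolated is extendable; since $\hamiltonian$ is not, $M^{S^1}$ has a component that is a surface, which by Proposition~\ref{prop: fixed surf} is a sphere equal to $J^{-1}(\Jmin)$ or $J^{-1}(\Jmax)$. Suppose, contrary to (1), that every fixed sphere of $(M,\omega,J)$ has self-intersection $-1$; there are at most two of them, one over $\Jmin$ and one over $\Jmax$, since each of these level sets is connected. Blowing each of these exceptional spheres down $S^1$-equivariantly produces a Hamiltonian $S^1$-space $(M',\omega',J')$ in which the blown-down sphere is replaced by an isolated fixed point at the new minimum (respectively maximum) of $J'$, while the rest of $M$ — and in particular every critical level of $J$ in $\,]\Jmin,\Jmax[\,$ — is left unchanged. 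Consequently $(M',\omega',J')$ still possesses a non-extremal level set with at least three non-free orbits, so it is not extendable by Theorem~\ref{prop: karshon prop 5.21}; on the other hand all of its fixed points are now isolated, because the fixed points of $(M,\omega,J)$ lying strictly between $\Jmin$ and $\Jmax$ were already isolated (a fixed surface is a local, hence by the Atiyah--Guillemin--Sternberg convexity theorem a global, extremum of $J$), so by Theorem~\ref{ifp} it \emph{is} extendable — a contradiction. Hence some fixed sphere over $\Jmin$ or $\Jmax$ has $I_\Sigma \neq -1$, which is condition (1); combined with non-adaptable $\Leftrightarrow$ (2) this completes the argument.

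The step that requires care is the equivariant blow-down: one must verify that collapsing an $S^1$-invariant exceptional sphere lying over an extreme value of $J$ yields a genuine Hamiltonian $S^1$-space on which the action is still effective and whose moment map agrees with $J$ outside the collapsed region, so that the hypotheses of Theorems~\ref{prop: karshon prop 5.21} and \ref{ifp} really do transfer. This is part of Karshon's machinery of $S^1$-equivariant blow-ups and blow-downs in \cite{karshon}, but it should be stated precisely here. I expect nothing else to be delicate: the identification of non-adaptability with the failure of condition~\ref{item:17} is immediate once Proposition~\ref{prop: fixed surf} is in hand, and the bookkeeping relating $I_\Sigma$ to the primitive edge vectors of a toric extension — needed afterwards, in the proof of Theorem~\ref{smooth}, to convert condition (1) into the non-smoothness of a semi-toric polygon vertex — is a routine Atiyah--Bott--Berline--Vergne localization computation.
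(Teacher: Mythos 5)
Your proposal is correct and follows essentially the same route as the paper: the (If) direction and condition (2) come straight from Theorem~\ref{prop: karshon prop 5.21}\ref{item:17} together with Proposition~\ref{prop: fixed surf}, and condition (1) is obtained exactly as you describe, by equivariantly blowing down any fixed $(-1)$-spheres at the extrema of $J$ to produce a Hamiltonian $S^1$-space with isolated fixed points that still violates \ref{item:17}, contradicting Theorem~\ref{ifp}. The paper carries out the blow-down step you flag as delicate by using the local toric extension near the extremum (\vungoc\ {[}Prop.~2.12{]}) and the equivariant Darboux--Weinstein theorem to identify a neighbourhood of $\Sigma$ with a neighbourhood of the exceptional divisor in the standard blow-up of $\C^2$.
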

\begin{proof}
\emph{(If)} By Theorem \ref{prop: karshon prop 5.21} \ref{item:17}, condition $(2)$ implies that the system is non-adaptable.

\emph{(Only if)} If the semi-toric system is non-adaptable, then by Theorem \ref{ifp} the $S^1$ action must have fixed surfaces which,
by Proposition \ref{prop: fixed surf}, are symplectic spheres which coincide either with $J^{-1}(J_{\mathrm{min}})$ or $J^{-1}(J_{\mathrm{max}})$. 
Moreover, $(2)$ follows from Theorem \ref{prop: karshon prop 5.21}.
Suppose that each of the $S^1$-fixed spheres has self-intersection $-1$; the idea is to derive a contradiction by constructing a Hamiltonian $S^1$ space 
 with isolated fixed points whose $S^1$ action is not extendable to a $\T^2$ action which, by Theorem \ref{ifp},
is impossible. 

Indeed, let $\Sigma$ be an $S^1$-fixed sphere with $I_{\Sigma}=-1$ and suppose that $\Sigma=J^{-1}(J_{\mathrm{min}})$.
 Let $x_0>J_{\mathrm{min}}$ be such that $\Sigma=M^{S^1}\cap J^{-1}([J_{\mathrm{min}},x_0[)$,
 and let $M_1\subset M$ be the open $S^1$-invariant symplectic submanifold of $M$ given by $ J^{-1}([J_{\mathrm{min}},x_0[)$.
 Since the set of regular values of $\Phi$ in $\Phi(M_1)$ is simply-connected, the $S^1$ action on $M_1$ extends to an effective Hamiltonian $\T^2$ action
 with moment map $\mu_1=(J,H_1)\colon M_1\to \R^2$ (cf.\ \vungoc\ \cite[Prop. 2.12]{vu ngoc}). Thus $\Sigma$ is a $\T^2$-invariant
 symplectic sphere and $\mu_1(\Sigma)$ is a vertical segment. 
 Let $\mathbf{w}=(w_1,w_2)$ and $\mathbf{w}'=(w_1',w_2')$ be the primitive tangent vectors to the edges in $\mu_1(M_1)$ coming out from $\mu_1(\Sigma)$ with positive first coordinate (see Fig. \ref{extension}).
 Endow $\C^2$ with the standard symplectic form, and with a $\T^2$ action given by $(\lambda_1,\lambda_2)\cdot (z_1,z_2)=(\lambda_1^{w_1}\lambda_2^{w_2}z_1,\lambda_1^{w'_1}\lambda_2^{w'_2}z_2)$.
 Since $I_\Sigma=-1$, by the equivariant Darboux-Weinstein theorem
$M_1$ is equivariantly symplectomorphic to a neighborhood of the exceptional divisor 
 in the standard blow up of $\C^2$. As such, $M$ can be equivariantly blown down along $\Sigma$, thus obtaining a new Hamiltonian $S^1$-space
 $(\widetilde{M},\widetilde{\omega},\widetilde{J})$ with an $S^1$-invariant open symplectic submanifold $M_2\subset \widetilde{M}$ where the
 $S^1$ action extends to a Hamiltonian $\T^2$ action with moment map $\mu_2=(\widetilde{J},H_2)\colon M_2\to \R^2$ with exactly one fixed point
 $p$ whose image is $v$
 (see Fig. \ref{extension}), and such that there exists an $S^1$ equivariant symplectomorphism between $M\setminus \bar{M}_1$ and $\widetilde{M}\setminus \bar{M}_2$, where $\bar{M}_i$ denotes the closure of $M_i$ for $i=1,2$.
Note that property (2) still holds for the Hamiltonian $S^1$-space $(\widetilde{M},\widetilde{\omega},\widetilde{J})$.
By repeating the same argument at $J^{-1}(J_{\mathrm{max}})$ if necessary, this procedure yields a Hamiltonian $S^1$-space with isolated fixed points
such that the pre-image of a non-extremal value of the $S^1$ moment map contains at least three non free-orbits
 which, by Theorem \ref{prop: karshon prop 5.21}, implies that the $S^1$ action is non-extendable. However, by Theorem \ref{ifp}, this is
 impossible.
\end{proof}

\begin{figure}[h]
\begin{center}
\input{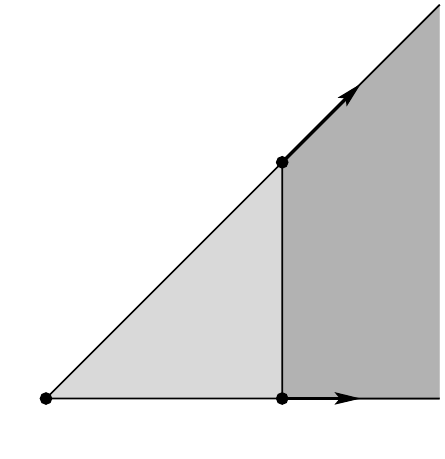_t}
\caption{}
\label{extension}
\end{center}
\end{figure}

\begin{remark}
It can be proved that $|I_\Sigma|=|\det(\mathbf{w}\,\mathbf{w'})|$. Hence the smoothness of $v$ in $\mu_2(M_2)$
 is related to the self-intersection of $\Sigma$.
 
\end{remark}
The following example illustrates a non-adaptable semi-toric system.

\begin{example}\label{example: na}
Consider $\C P^1\times \C P^1$ blown up at two points with a Hamiltonian $\T^2$ action such that the image of the moment map is as in Figure
\ref{ex:nonad} (a). 

By performing a nodal trade as in Symington \cite[Lemma 6.3]{symington}, the polygon in Figure \ref{ex:nonad} (b) gives rise to a semi-toric system. This can also be seen in a different fashion. The polygon in Figure \ref{ex:nonad} (c) is a (representative of a) Delzant semi-toric polygon of complexity 1 as in Pelayo $\&$ \vungoc\ \cite[Def. 4.3]{pelayo_vu_ngoc_con}. Setting the Taylor series invariant associated to the interior marked point to be 0 (cf. Pelayo $\&$ \vungoc\ \cite{pelayo-vu ngoc inventiones}), the resulting decorated polygon gives rise to a semi-toric system using Pelayo $\&$ \vungoc\ \cite[Th. 4.6]{pelayo_vu_ngoc_con}. The claim follows by noticing that the polygons in Figure \ref{ex:nonad} (b) and (c) differ by an appropriate piecewise integral affine transformation (cf. \vungoc\ \cite[Prop. 4.1]{vu ngoc}).
Note that the vertex $(1,3)$ in Figure \ref{ex:nonad} (c) is hidden Delzant.
By blowing up the vertex $(0,0)$ we obtain the generalized polygon in Figure \ref{ex:nonad} (d), corresponding to a semi-toric system
$\Phi=(J,H)\colon M\to \R^2$
with two elliptic-elliptic points and a focus focus point on $J^{-1}(1)$.
Note that the self-intersection of $\Sigma=J^{-1}(J_{\mathrm{min}})$ is $-2$, whereas the one of $\Sigma'=J^{-1}(J_{\mathrm{max}})$ is $-1$; so only
$\Sigma'$ could be blown down.\\
Observe that the labeled graph associated to the underlying Hamiltonian $S^1$-space (with the appropriate choice of symplectic form and moment map) is the same as the one of Example \ref{example:ne}, thus making these two spaces isomorphic in the $\Hs$ category.

Iterating the above procedure, one can prove the existence of a semi-toric system with two elliptic-elliptic points and an arbitrary
number of focus focus points in $J^{-1}(x_0)$, for some $x_0\in \, ]J_{\mathrm{min}},J_{\mathrm{max}}[$.
\end{example}

\begin{figure}[h]
\begin{center}
\input{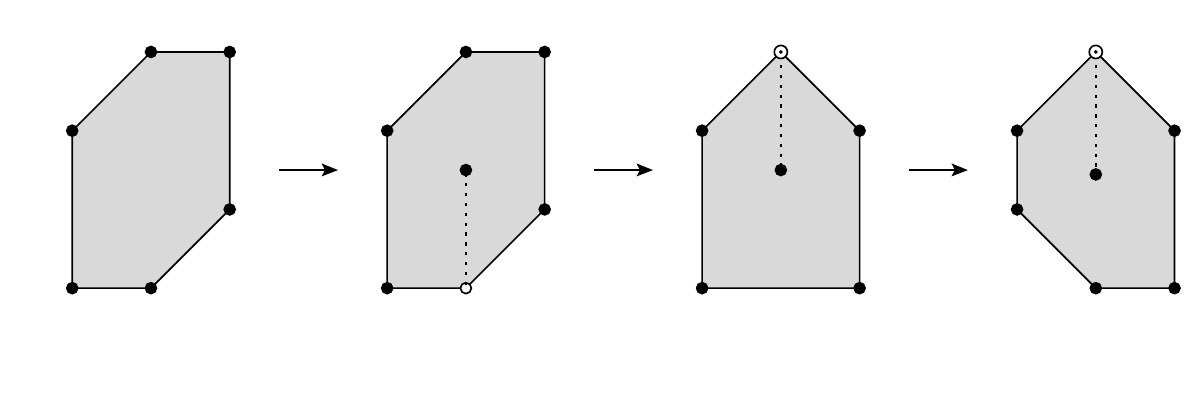_t}
\caption{}
\label{ex:nonad}
\end{center}
\end{figure}

For each $x \in \R$, denote by $E_x, FF_x$ the number of elliptic-elliptic and focus-focus critical points of $\Phi$ in $J^{-1}(x)$. Moreover, let $S_x$ denote the number of elliptic-regular orbits of $\Phi$ stabilized by a subgroup $\Z_k \subset S^1$ in $J^{-1}(x)$, where $k \geq 2$. 

\begin{lemma}\label{sp}
Let $\semitoric$ be a semi-toric system and let $x \in \R$. Then $E_x\leq 2$ and $S_x\leq 2$.
\end{lemma}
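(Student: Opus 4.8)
The plan is to bound $E_x$ and $S_x$ separately by studying the local picture of $J^{-1}(x)$ for a \emph{fixed} $x \in \,]J_{\mathrm{min}},J_{\mathrm{max}}[$ (the boundary cases being trivial, since $J^{-1}(J_{\mathrm{min}})$ and $J^{-1}(J_{\mathrm{max}})$ consist only of a fixed surface or an elliptic-elliptic point together with regular and elliptic-regular orbits, none of which contribute to $E_x$ or $S_x$ in the interior sense). The key point in both cases is that the constraint is a connectedness statement: a level set of the $S^1$-moment map $J$ that meets the boundary $\partial B$ on both `sides' — that is, meets both the top and the bottom curved edge of $B = \Phi(M)$ — is split into pieces in a way incompatible with connectedness of the fibers of $\Phi$.

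First I would handle $E_x \le 2$. An elliptic-elliptic critical point $p$ of $\Phi$ satisfies $\mathrm{rk}\, D_p\Phi = 0$, and by the Eliasson-Miranda-Zung local normal form of Section~\ref{sec:str}, in a neighbourhood $U_0 \subset \R^4$ it looks like $\Phi_{\mathrm{ee}} = (q_1,q_2)$ with $q_i \ge 0$; thus locally $\Phi(U_0)$ is a `corner' and $\Phi(p)$ lies on $\partial B$ as a vertex of $B$ (cf.\ Section~\ref{sec:st}). In an integral affine chart, the two curved edges emanating from $\Phi(p)$ go into the interior on the two sides of the vertical line $\{x = J(p)\}$, so one of them lies in $\{x < J(p)\}$-side and the other in the $\{x > J(p)\}$-side, unless the vertical line itself is an edge — but that only happens at $J_{\mathrm{min}}$ or $J_{\mathrm{max}}$ by Proposition~\ref{prop: fixed surf}, which we have excluded. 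Consequently an elliptic-elliptic value with first coordinate $x$ is a vertex of $B$ lying on either the top or the bottom curved edge of $B$ at height $x$. Since $B$ is a (curved) polygon, along any vertical line $\{x_1 = x\}$ there is at most one vertex of $B$ on the top boundary and at most one on the bottom boundary: this is exactly the statement that $\partial B$ above $x$ and below $x$ each consist of a single curved edge near that $x$-coordinate, except at finitely many vertices, and two distinct vertices on the same side would force the boundary to be non-convex or $B$ to be disconnected, contradicting the convexity/connectedness properties of $B$ established in \vungoc\ \cite[Theorem 3.4]{vu ngoc}. Hence there are at most two elliptic-elliptic critical values (one top, one bottom) with first coordinate $x$, and since the preimage of an elliptic-elliptic value is a single elliptic-elliptic point, $E_x \le 2$.

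Next, $S_x \le 2$: by Proposition~\ref{prop:ced}, any $\Z_k$-sphere $\Sigma$ (with $k \ge 2$) has $\Phi(\Sigma)$ equal to a curved edge of $B$, hence contained in $\partial B$; and an elliptic-regular orbit $\mathcal{O}$ stabilized by $\Z_k \subset S^1$ lies on such a sphere, so $\Phi(\mathcal{O}) \in \partial B$ is an elliptic-regular value on a curved edge. Near $x$, the top curved edge of $B$ has a well-defined slope which is $k$ in absolute value in the first coordinate only if the corresponding sphere is a $\Z_k$-sphere (cf.\ Remark~\ref{rmk:wtc} and the proof of Theorem~\ref{thm: main}); in any case, a given vertical line $\{x_1 = x\}$ meets the top boundary curved edge of $B$ in at most one point and the bottom curved edge in at most one point, so there are at most two values on $\partial B \cap \{x_1 = x\}$ that are elliptic-regular and can belong to a $\Z_k$-sphere. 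Since $\Phi^{-1}$ of an elliptic-regular value is a single orbit, $S_x \le 2$. The combinatorial heart in both arguments is the same: \emph{a vertical line meets $\partial B$ in at most two boundary points (one top, one bottom)}, which is a consequence of convexity of $B$ together with the fact that $J = x_1$ is one of the integral affine coordinates, so vertical lines are well-defined affinely and $B$ being convex forces its intersection with each vertical line to be a segment with two endpoints.

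The main obstacle I anticipate is making precise the claim that $B \cap \{x_1 = x\}$ has exactly two boundary points for $x \in \,]J_{\mathrm{min}},J_{\mathrm{max}}[$ and relating those two boundary points to the top/bottom curved edges in a way that handles coincidences (e.g.\ an elliptic-elliptic vertex that simultaneously is an endpoint of a $\Z_k$-edge), and ruling out degenerate configurations where the vertical line is tangent to $\partial B$ or passes through a focus-focus value interior to $B$. These are all controlled by \vungoc\ \cite[Theorem 3.4]{vu ngoc} (convexity of $B$, the structure of $\partial B$ as curved edges meeting at elliptic-elliptic vertices) together with the Eliasson-Miranda-Zung normal form and connectedness of the fibers; I would cite these rather than reprove them, and the write-up would reduce to the short observation that each of the two sides of $\partial B$ over a fixed interior $x$ carries at most one vertex (giving $E_x \le 2$) and at most one elliptic-regular point that can lie on a $\Z_k$-sphere (giving $S_x \le 2$).
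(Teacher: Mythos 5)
Your proof is correct and follows essentially the same route as the paper's: elliptic-elliptic points map to vertices of $B=\Phi(M)$ and $\Z_k$-spheres to curved edges of $B$ (Proposition \ref{prop:ced}), and each vertical slice of $B$ meets $\partial B$ in at most two points (one top, one bottom), giving both bounds. The only cosmetic caveat is that $B$ itself need not be convex as a subset of $\R^2$ before straightening; the fact you actually need --- that $\Phi(J^{-1}(x))$ is a vertical segment, so it contains at most two boundary points and at most two vertices of $B$ --- is exactly what \vungoc's Theorem 3.4 provides, which you already cite.
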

\begin{proof}
Lemma \ref{lemma:vertices} implies that if $p \in M^{S^1}$ is an isolated fixed point for the $S^1$-action defined by $J$, then $p$ is either a critical point of focus-focus type or of elliptic-elliptic type for $\Phi$. Since $\Phi(J^{-1}(x))$ contains at most two vertices of $\Phi(M)$ and the fibres of $\Phi$ are connected, it follows that $E_x\leq 2$. The second inequality follows easily by \vungoc\ \cite[Th.\ 3.4]{vu ngoc} and by observing that the image of a $\Z_k$-sphere is, by Proposition \ref{prop:ced}, a curved edge of $\Phi(M)$.
\end{proof}

For non-adaptable semi-toric systems, the following lemma provides some further bounds on the quantities $E_x, FF_x, S_x$ introduced above. 
\begin{lemma}\label{efs}
Let $\semitoric$ be non-adaptable. Let $x\in \R$ be a point such that $E_x+FF_x+S_x\geq 3$. Then 
\begin{itemize}
\item[(i)] If  $E_x+FF_x\geq 3$ then $FF_x\geq 1$.
\item[(ii)] $E_x+S_x\leq 2$.
\end{itemize}
\end{lemma}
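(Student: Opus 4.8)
The plan is to dispatch the two parts separately. Part (i) is immediate: by Lemma \ref{sp} one always has $E_x\le 2$, so if $E_x+FF_x\ge 3$ then $FF_x\ge (E_x+FF_x)-E_x\ge 1$; no use of non-adaptability is needed here.

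For part (ii), I would first reduce to the case $x\in\,]J_{\mathrm{min}},J_{\mathrm{max}}[$. If $x$ is extremal, then $FF_x=0$ by Remark \ref{rmk:kir}, and by Proposition \ref{prop: fixed surf} the level set $J^{-1}(x)$ is either a single elliptic-elliptic critical point of $\Phi$ or a fixed symplectic sphere $\Sigma$ whose image is a vertical edge of $\Phi(M)$ carrying exactly two elliptic-elliptic values (its endpoints); in either case $E_x\le 2$ and, since $J^{-1}(x)$ is pointwise fixed, $S_x=0$. Hence $E_x+FF_x+S_x\le 2$, so the hypothesis of the lemma forces $x$ to be non-extremal.

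Next, for $x\in\,]J_{\mathrm{min}},J_{\mathrm{max}}[$, I would invoke Vu Ngoc's description of $B=\Phi(M)$ (\vungoc\ \cite[Th.\ 3.4]{vu ngoc}, cf.\ Remark \ref{rmk:boh}): the vertical slice $\Phi(M)\cap(\{x\}\times\R)$ is a segment, and $\partial\Phi(M)\cap(\{x\}\times\R)$ consists of at most two points, which I call $m_x$ and $M_x$. The key observation is then that every $S^1$-orbit in $J^{-1}(x)$ counted by $E_x$ or by $S_x$ is mapped by $\Phi$ into $\{m_x,M_x\}$: an elliptic-elliptic critical point is sent to a vertex of $\Phi(M)$, hence to a point of $\partial\Phi(M)\cap(\{x\}\times\R)$; and an elliptic-regular orbit stabilized by some $\Z_k$ with $k\ge 2$ lies on a $\Z_k$-sphere, whose image is a curved edge of $\Phi(M)$ by Proposition \ref{prop:ced}, so it too is sent into $\partial\Phi(M)\cap(\{x\}\times\R)$. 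Finally, each of $m_x$ and $M_x$ is a boundary point of $\Phi(M)$, hence (Section \ref{sec:st}) either an elliptic-elliptic value --- whose $\Phi$-fibre is a single point, accounting for one unit of $E_x$ and none of $S_x$ --- or an elliptic-regular value --- whose $\Phi$-fibre is a single circle (a rank $1$ orbit), accounting for no unit of $E_x$ and at most one of $S_x$. Thus each of the two endpoints contributes at most $1$ to $E_x+S_x$, and $E_x+S_x\le 2$.

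I expect the only genuinely delicate point to be making the slice structure of $\Phi(M)$ precise enough --- in particular, that over a non-extremal $x$ the boundary meets the vertical line in at most two points, and that over such a point the $\Phi$-fibre is a single point or a single circle --- so that the two possible contributions to $E_x+S_x$ over a given endpoint cannot both be realized; all of this is available from \vungoc\ \cite[Th.\ 3.4]{vu ngoc} and the Eliasson--Miranda--Zung normal form of Section \ref{sec:str}, so the argument should be short.
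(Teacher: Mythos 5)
Your proposal is correct and follows essentially the same route as the paper: part (i) is exactly the paper's appeal to Lemma \ref{sp}, and part (ii) rests on the same facts the paper invokes (elliptic-elliptic points map to vertices of $\partial\Phi(M)$, $\Z_k$-spheres map to curved edges by Proposition \ref{prop:ced}, and a non-extremal vertical line meets $\partial\Phi(M)$ in at most two points, each of whose fibres is a single point or a single circle). Your version merely spells out the counting that the paper compresses into ``this follows easily from the fact that $\Phi(M)$ is a curved polygon.''
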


\begin{proof}
\emph{(i)}
This follows from Lemma \ref{sp}.\\
\emph{(ii)} By Lemma \ref{sp} it is sufficient to prove that we cannot have $E_x= 2$ and $S_x\geq 1$, or $S_x=2$ and $E_x\geq 1$. This follows easily from the fact that $\Phi(M)$ is a curved polygon, with vertices corresponding to elliptic-elliptic points, and such that the image of $\Z_k$-spheres correspond to edges of $\Phi(M)$ (cf.\ \vungoc\ \cite[Th. 3.4]{vu ngoc} and Proposition \ref{prop:ced}).
\end{proof}

Using Lemma \ref{efs}, the following corollary is immediate.
\begin{corollary}\label{1-6}
For a non-adaptable semi-toric system $\semitoric$, there is a point $x \in \R$ such that exactly one of the following holds.
\begin{enumerate}[label=(\arabic*), ref=(\arabic*)]
\item \label{item:19}
  $E_x=0$, $FF_x\geq 3$ and $S_x=0$.
\item
  $E_x= 1$, $FF_x\geq 2$ and $S_x=0$.
\item
  $E_x=2$, $FF_x\geq 1$ and $S_x=0$.
\item 
  $E_x=0$, $FF_x\geq 1$ and $S_x=2$.
\item 
  $E_x=0$, $FF_x\geq 2$ and $S_x=1$.
\item \label{item:20}
  $E_x=1$, $FF_x\geq 1$ and $S_x=1$.
\end{enumerate}
\end{corollary}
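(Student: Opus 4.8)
The plan is to derive Corollary~\ref{1-6} from Lemmas~\ref{sp} and~\ref{efs} together with Proposition~\ref{prop:nad}, and to do so essentially by a finite case analysis. First I would invoke Proposition~\ref{prop:nad}: since $\semitoric$ is non-adaptable, its \emph{Local} condition furnishes a point $x \in \,]J_{\mathrm{min}},J_{\mathrm{max}}[$ such that $J^{-1}(x)$ contains at least three non-free orbits of the $S^1$-action generated by $J$. Every non-free orbit of the $S^1$-action consists of critical points of $\Phi$ (as noted before Proposition~\ref{prop:nad}), and by Lemma~\ref{lemma:vertices}, Proposition~\ref{prop:ced} and Corollary~\ref{cor:zknoff} these non-free orbits fall into exactly three types: elliptic-elliptic points, focus-focus points, and elliptic-regular orbits stabilized by some $\Z_k$ with $k\geq 2$. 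Hence the count of non-free orbits in $J^{-1}(x)$ is exactly $E_x+FF_x+S_x$, so $E_x+FF_x+S_x \geq 3$.

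Next I would feed this into the constraints. Lemma~\ref{sp} gives $E_x \leq 2$ and $S_x \leq 2$ unconditionally, and Lemma~\ref{efs}(ii) gives the stronger $E_x + S_x \leq 2$. Now enumerate: the pair $(E_x,S_x)$ can only be one of $(0,0),(1,0),(2,0),(0,1),(1,1),(0,2)$ by $E_x+S_x\leq 2$. In each of these six cases, the requirement $E_x+FF_x+S_x\geq 3$ pins down a lower bound on $FF_x$, namely $FF_x \geq 3 - E_x - S_x$. This yields precisely the six bulleted possibilities \ref{item:19}--\ref{item:20}: $(E_x,S_x)=(0,0)$ gives $FF_x\geq 3$; $(1,0)$ gives $FF_x\geq 2$; $(2,0)$ gives $FF_x\geq 1$; $(0,2)$ gives $FF_x\geq 1$; $(0,1)$ gives $FF_x\geq 2$; and $(1,1)$ gives $FF_x\geq 1$. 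I would also remark that in the cases with $E_x \geq 1$ or $E_x + FF_x \geq 3$ arising here, Lemma~\ref{efs}(i) automatically guarantees $FF_x\geq 1$, which is consistent with (and in cases (2) and (3) already implied by) the stated bounds; this is a sanity check rather than an extra ingredient. The word ``exactly'' in the statement is justified because the six pairs $(E_x,S_x)$ are mutually exclusive, so at most one of the six conclusions applies to a given $x$ with this prescribed pair.

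The only point that needs a little care—and is the main (mild) obstacle—is the identification of ``number of non-free orbits of the $S^1$-action in $J^{-1}(x)$'' with the sum $E_x+FF_x+S_x$. One must argue that there is no double counting and no missing type. No double counting: a single orbit cannot simultaneously be an isolated fixed point and a $\Z_k$-sphere orbit, and Lemma~\ref{lemma:vertices} separates focus-focus from elliptic-elliptic isolated fixed points. No missing type: any non-free $S^1$-orbit that is not a fixed point has nontrivial finite stabilizer $\Z_k$, $k\geq 2$, hence by Corollary~\ref{cor:zknoff} and Proposition~\ref{prop:ced} lies on a $\Z_k$-sphere and consists of elliptic-regular critical points of $\Phi$; any non-free orbit that is a fixed point is an isolated fixed point in $M^{S^1}$ (fixed \emph{surfaces} occur only at $J_{\mathrm{min}},J_{\mathrm{max}}$ by Proposition~\ref{prop: fixed surf}, and $x$ is strictly interior), hence by Lemma~\ref{lemma:vertices} is focus-focus or elliptic-elliptic. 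This exhausts all cases. Once this bookkeeping is in place, the corollary is just the elementary enumeration above, so the proof is short.

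\begin{proof}
By Proposition~\ref{prop:nad}, since $\semitoric$ is non-adaptable, there exists $x \in \,]J_{\mathrm{min}},J_{\mathrm{max}}[$ such that $J^{-1}(x)$ contains at least three non-free orbits of the $S^1$-action generated by $J$. Every non-free orbit consists of critical points of $\Phi$, and, since $x$ is not an extremal value of $J$, no such orbit lies on a fixed surface by Proposition~\ref{prop: fixed surf}. Hence each non-free orbit is either an isolated fixed point in $M^{S^1}$ or has finite stabilizer $\Z_k$ with $k\geq 2$. In the first case, Lemma~\ref{lemma:vertices} shows it is a focus-focus or an elliptic-elliptic critical point of $\Phi$; in the second case, Corollary~\ref{cor:zknoff} and Proposition~\ref{prop:ced} show it lies on a $\Z_k$-sphere and is an elliptic-regular orbit. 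These possibilities are mutually exclusive, so the number of non-free orbits in $J^{-1}(x)$ equals $E_x+FF_x+S_x$; thus $E_x+FF_x+S_x\geq 3$.

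By Lemma~\ref{efs}(ii), $E_x+S_x\leq 2$, so $(E_x,S_x)$ is one of $(0,0)$, $(1,0)$, $(2,0)$, $(0,1)$, $(1,1)$, $(0,2)$. In each case $FF_x\geq 3-E_x-S_x$, which gives precisely one of the six listed alternatives \ref{item:19}--\ref{item:20}. As the six pairs $(E_x,S_x)$ are pairwise distinct, exactly one of the alternatives holds for this $x$.
\end{proof}
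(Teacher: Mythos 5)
Your proof is correct and follows essentially the same route the paper intends: the paper declares the corollary ``immediate'' from Lemma~\ref{efs}, and you supply exactly the missing bookkeeping --- the existence of $x$ with at least three non-free orbits via the local condition of Proposition~\ref{prop:nad}, the identification of that count with $E_x+FF_x+S_x$ (using Proposition~\ref{prop: fixed surf}, Lemma~\ref{lemma:vertices}, Corollary~\ref{cor:zknoff} and Proposition~\ref{prop:ced}), and the enumeration of the pairs $(E_x,S_x)$ allowed by $E_x+S_x\leq 2$. Your observation that Lemma~\ref{efs}(i) is not actually needed beyond the bound $FF_x\geq 3-E_x-S_x$ is also accurate.
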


The following proposition completes the proof of Theorem \ref{smooth}.
\begin{prop}
\label{prop:nonsmoothvertex}
Any semi-toric polygon $\polygon$ associated to a non-adaptable $\semitoric$ has at least one vertex that is not smooth.
\end{prop}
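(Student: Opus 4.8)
The plan is to fix the value $x_0\in\R$ furnished by Corollary \ref{1-6} and to exhibit, on the vertical line $\{x=x_0\}$, a vertex of the given semi-toric polygon $\polygon$ that fails the smoothness criterion of Lemma \ref{lemma:lff}. First I would record the relevant local picture. Since $\Jmin<x_0<\Jmax$ and the fibres of $\Phi$ are connected, $B\cap\{x=x_0\}$ is a segment whose two endpoints lie on $\partial B$; as $\polygon$ is a convex polygon with first-coordinate range $[\Jmin,\Jmax]$ and $x_0$ is strictly interior to it (focus-focus values are interior points of $B$), the line $\{x=x_0\}$ meets $\partial\polygon$ in exactly two points $v^{+}$ (top) and $v^{-}$ (bottom), and these are the only possible locations of a vertex of $\polygon$ or of $B$ over $x_0$. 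Each endpoint of $B\cap\{x=x_0\}$ is an elliptic-elliptic value when $E_{x_0}\ge 1$ and an elliptic-regular value otherwise, while the $FF_{x_0}$ focus-focus points in $J^{-1}(x_0)$ give $FF_{x_0}$ cuts along $\{x=x_0\}$; say $a$ of them point up and $b=FF_{x_0}-a$ point down. Because focus-focus values are interior to $B$, every upward cut contains the top endpoint of $B\cap\{x=x_0\}$ and no downward cut does (and symmetrically below), so by Theorem \ref{th: vu ngoc th 3.8}\ref{item:10} and Definition \ref{defn:vertices}: if $a\ge 1$ then $v^{+}$ is a vertex of $\polygon$ of degree exactly $a$, which is hidden Delzant when the top endpoint of $B\cap\{x=x_0\}$ is an elliptic-elliptic value and fake otherwise; the mirror statement holds for $v^{-}$ and $b$.

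I would then collect two auxiliary facts. By Lemma \ref{lemma:lff}, hidden Delzant vertices and fake vertices of degree $\ge 2$ are never smooth, while a fake vertex of degree $1$ is non-smooth exactly when it lies on a chain of edges corresponding to a $\Z_k$-sphere. Secondly, if $J^{-1}(x_0)$ contains an elliptic-regular orbit $\mathcal O$ stabilised by $\Z_k$ with $k\ge 2$, then by Proposition \ref{prop:ced} $\mathcal O$ lies on a $\Z_k$-sphere $\Sigma$ with $\Phi(\Sigma)$ a curved edge of $B$; since $\mathcal O$ has $J$-value $x_0$ it cannot be a pole of $\Sigma$, so $f(\Phi(\mathcal O))\in\{v^{+},v^{-}\}$ and this point lies strictly between the two ends of the chain of edges $f(\Phi(\Sigma))$. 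Using connectedness of the fibres of $\Phi$ (regular and elliptic-regular fibres over $x_0$ being a single torus, resp.\ circle) one checks that the $\Phi$-images of distinct $\Z_k$-spheres meet only at shared poles, hence at most one of the $S_{x_0}$ such orbits can map to $v^{+}$ and at most one to $v^{-}$; in particular when $S_{x_0}=2$ one maps to $v^{+}$ and the other to $v^{-}$. Likewise, when $S_{x_0}\ge 1$ and $E_{x_0}=1$ the elliptic-elliptic value over $x_0$ cannot be a pole of such a $\Sigma$ (otherwise $\Sigma$ would possess no orbit over $x_0$), so $f(\Phi(\mathcal O))$ is the endpoint of $B\cap\{x=x_0\}$ opposite to that elliptic-elliptic value.

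With these preparations the proof reduces to inspecting the six possibilities of Corollary \ref{1-6}, in all of which $FF_{x_0}\ge 1$. In case \ref{item:19}, and more generally whenever $FF_{x_0}\ge 3$, one of $a,b$ is $\ge 2$ and the corresponding $v^{\pm}$ is a fake vertex of degree $\ge 2$. In cases (2) and (3) an elliptic-elliptic value sits over $x_0$; if a cut points towards its side then that $v^{\pm}$ is hidden Delzant, and otherwise — which can only happen in case (2) — all $FF_{x_0}\ge 2$ cuts point to the opposite, elliptic-elliptic-free side, making the corresponding $v^{\pm}$ a fake vertex of degree $\ge 2$. In cases (4), (5), (6) there is an elliptic-regular orbit over $x_0$ stabilised by some $\Z_k$; the boundary point carrying the associated chain over $x_0$ is elliptic-elliptic-free (either because $E_{x_0}=0$, or by the last auxiliary fact), so as soon as a cut points towards it that point becomes a fake vertex lying on a $\Z_k$-chain, hence non-smooth by part \ref{item:13} of Lemma \ref{lemma:lff}. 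In case (4) both $v^{+}$ and $v^{-}$ carry such chains, so one of them matches whichever of $a,b$ is positive; in case (5) the only subcase not already covered by $a\ge 2$ or $b\ge 2$ is $a=b=1$, which is exactly of this type; and in case (6) one uses the cut on the $\Z_k$-chain side when the elliptic-elliptic side carries no cut, and the hidden-Delzant argument otherwise. This exhausts all six cases and completes the proof.

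The delicate part is the bookkeeping in the $S_{x_0}>0$ cases: one must verify that, after the chosen cuts have been inserted, the point $v^{\pm}$ in question genuinely sits in the interior of a chain of edges associated to a $\Z_k$-sphere — so that part \ref{item:13} of Lemma \ref{lemma:lff} applies — and that two distinct $\Z_k$-spheres cannot both contribute chains on the same side over $x_0$. Both of these rest on connectedness of the fibres of $\Phi$ together with the structure of $B$ from Theorem \ref{th: vu ngoc th 3.8}. The remaining cases, governed purely by elliptic-elliptic points or by an excess of focus-focus cuts, follow immediately from the degree count of the first paragraph.
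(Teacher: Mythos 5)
Your proposal is correct and follows the same route as the paper: fix the value $x_0$ supplied by Corollary \ref{1-6}, and check case by case that for any choice of cuts one of the two boundary points of $\polygon$ over $x_0$ becomes a hidden Delzant vertex, a fake vertex of degree at least $2$, or a fake vertex on a $\Z_k$-chain, hence non-smooth by Lemma \ref{lemma:lff}. The only difference is that the paper dismisses the six cases as ``easy to check,'' whereas you carry out that verification (including the bookkeeping of which endpoint carries the $\Z_k$-chain) explicitly and correctly.
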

\begin{proof}
  It suffices to show that for any choice of cuts $\boldsymbol{\ep}$, one of the vertices of the corresponding semi-toric polygon $\polygon$ fails to be smooth. Let $x \in \R$ be such that $J^{-1}(x)$ contains at least three non-free orbits for the $S^1$-action defined by $J$. Then, by Corollary \ref{1-6}, one of the situations from \ref{item:19} to \ref{item:20} above arises. It is easy to check that in any of the cases \ref{item:19} -- \ref{item:20}, for any choice of cuts there is at least one vertex that does not satisfy either condition \ref{item:12} or \ref{item:13} in Lemma \ref{lemma:lff}. Thus, for any choice of $\boldsymbol{\ep}$, the corresponding semi-toric $\polygon$ has at least one vertex that is not smooth by Lemma \ref{lemma:lff}.
\end{proof}


\end{document}